\title[Constructions of variations of mixed Hodge structures]
{Techniques of constructions of variations of mixed Hodge structures}
\author{Hisashi Kasuya}
\theoremstyle{plain}
\theoremstyle{plain}
\theoremstyle{plain}
\theoremstyle{plain}
\newtheorem{theorem}{Theorem}[subsection] 
\theoremstyle{remark}
\newtheorem{remark}[theorem]{Remark}
\theoremstyle{remark}
\newtheorem{Important note}[theorem]{Important note}
\theoremstyle{Main result}
\newtheorem{main result}{Main result}
\theoremstyle{lemma}
\newtheorem{lemma}[theorem]{Lemma}
\theoremstyle{definition}
\newtheorem{definition}[theorem]{Definition}
\theoremstyle{proposition}
\newtheorem{proposition}[theorem]{Proposition}
\theoremstyle{corollary}
\newtheorem{corollary}[theorem]{Corollary}
\theoremstyle{remark}
\newtheorem{example}[theorem]{Example}
\theoremstyle{plain}
\theoremstyle{problem}
\newtheorem*{prothm}{Theorem(Prototype)}
\newtheorem*{cond(V)}{Condition (V)}
\newtheorem*{condV2}{Condition (V')}
\address[Hisashi Kasuya]{\begin{itemize}
\item
Department of Mathematics, Graduate School of Science, Osaka University, Osaka,  Japan.
\item
Institut de Matht\'ematiques de Jussieu - Paris Rive Gauche, Paris, France
\end{itemize}}
\email{kasuya@math.sci.osaka-u.ac.jp}
\keywords{Variation of mixed Hodge structure, mixed Hodge structure on Sullivan's $1$-minimal model, flat bundle, Hodge theory, K\"ahler manifold}
\subjclass[2010]{Primary:14D07, 58A14, 	53C55  Secondary:55P62,55N25}
\newcommand{\C}{\mathbb{C}}
\newcommand{\R}{\mathbb{R}}
\newcommand{\Z}{\mathbb{Z}}
\newcommand{\V}{\mathcal V}
\newcommand{\W}{\mathcal W}
\newcommand{\X}{\mathcal X}
\begin{document} 

\maketitle
\begin{abstract}
We give a way of constructing real variations of mixed Hodge structures over compact K\"ahler manifolds by using  mixed Hodge structures on Sullivan's $1$-minimal models  of  certain  differential graded algebras  associated with real variations of Hodge structures.

\end{abstract}

\section{Introduction}
A {\em real variation of mixed Hodge structure} ($\R$-VMHS)  over a complex manifold $M$ is  $({\bf E}, {\bf W}_{\ast}, {\bf F}^{\ast})$ so that:
\begin{enumerate}
\item ${\bf E}$ is a local system of  finite-dimensional $\R$-vector spaces.
\item ${\bf W}_{\ast}$ is an increasing filtration of the local system ${\bf E}$.
\item  ${\bf F}^{\ast}$ is a decreasing filtration of the holomorphic vector bundle ${\bf E}\otimes_{\R}{\mathcal O}_{M}$.
\item The Griffiths transversality $D{\bf F}^{r}\subset A^{1}(M, {\bf F}^{r-1})$ holds where $D$ is the flat connection associated with the local system ${\bf E}_{ \C}$.
\item For any $k\in \Z$, the local system $Gr^{\bf W}_{k}({\bf E})$ with the filtration induced by $ {\bf F}^{\ast}$ is a real variation of Hodge structure of weight $k$.
\end{enumerate}
The purpose of this paper is to give a  way of constructing $\R$-VMHSs over  compact K\"ahler manifolds starting from  real variations of Hodge structures.

\smallskip
\subsection{Prototype}
We first introduce our main results for the simplest case.
We briefly review Sullivan's $1$-minimal model (see \cite{DGMS}, \cite{GMo},  \cite{Sul} for more details).
We fix a ground field $\mathbb{K}$ of characteristic zero. 
A differential graded algebra (DGA) ${\mathcal M}^{\ast}$ is {\em$1$-minimal} if ${\mathcal M}^{\ast}$  is the increasing union of sub-DGAs
\[\mathbb{K}={\mathcal M}^{\ast}(0)\subset {\mathcal M}^{\ast}(1)\subset \dots 
\]
such that ${\mathcal M}^{\ast}(k)$ is the exterior algebra $\bigwedge (\V_{1}\oplus\dots \oplus \V_{k})$ of the direct sum $\V_{1}\oplus\dots \oplus\V_{k}$ of vector spaces $\V_{1},\dots, \V_{k}$  with   $d\V_{k}\subset {\mathcal M}^{2}(k-1)=\bigwedge^{2} (\V_{1}\oplus\dots \oplus\V_{k-1})$ for each $k\ge1$ where the degree of any element in  each $V_{i}$ is of degree $1$.
On the dual space of $\mathcal M^{1}$, the dual map of the differential $d:\mathcal M^{1}\to \mathcal M^{1}\wedge \mathcal M^{1}$ is a Lie bracket.
Such Lie algebra is called the {\em dual Lie algebra} of $\mathcal M^{\ast}$.
A {\em $1$-minimal model} of a DGA $A^{\ast}$ is a  $1$-minimal DGA $\mathcal M^{\ast}$ with a morphism $\phi:{\mathcal M}^{\ast}\to A^{\ast}$ which induces isomorphisms on $0$th and first cohomologies and an injection on second cohomology.
For any cohomologically connected DGA $A^{\ast}$, a $1$-minimal model $\mathcal M^{\ast}$ of $A^{\ast}$ exists and unique up to isomorphisms of DGAs
where cohomologically connected  means  $H^{0}(A^{\ast})\cong \mathbb{K}$.

Let $M$ be a compact complex manifold with a K\"ahler metric $g$.
Then, by Morgan's work (\cite{Mor}), there exists an $\R$-mixed Hodge structure $(W_{\ast},F^{\ast})$ on the $1$-minimal model ${\mathcal M}^{\ast}$ of the de Rham complex $A^{\ast}(M)$ of $M$ (Morgan's $\R$-mixed Hodge structure).
We notice that there are various choices of Morgan's $\R$-mixed Hodge structures.
On the dual Lie algebra $\frak u$ of ${\mathcal M}^{\ast}$ with the dual  $\R$-mixed Hodge structure induced by one Morgan's $\R$-mixed Hodge structure,
we define:
\begin{definition}\label{mixuupr}
A {\em mixed Hodge $\frak u$-representation} is  $(V, W_{\ast}, F^{\ast}, \Omega)$  so that:
\begin{itemize}
\item $V$ is a finite-dimensional $\R$-vector space with an $\R$-mixed Hodge structure $(W_{\ast}, F^{\ast})$

\item 
$\Omega:{\frak u}\to {\rm End}(V)$ is a representation and a morphism of $\R$-mixed Hodge structures.

\end{itemize}
\end{definition}
We notice that $\Omega$ is regarded as $\Omega\in {\mathcal M}^{1}\otimes {\rm End}(V)$ satisfying the Maurer-Cartan equation $d\Omega+\frac{1}{2}[\Omega,\Omega]=0$.
Thus, $\Omega_{\phi}=\phi(\Omega)\in A^{\ast}(M)\otimes {\rm End}(V)$ gives a flat connection on the trivial  ${\mathcal C}^{\infty}$-vector bundle $M\times V$.

We state a prototype of the main result.
\begin{prothm}\label{proun}
We can take   canonical maps $\phi:{\mathcal M}^{\ast}\to A^{\ast}(M)$ and $\varphi^{\prime} : {\mathcal M}^{\ast}_{\C}\to A^{\ast}(M)\otimes \C$  which induce isomorphisms on $0$th and first cohomologies and  injections on second cohomology 
 and define a special   $\R$-mixed Hodge structure $(W_{\ast},F^{\ast})$ on the $1$-minimal model ${\mathcal M}^{\ast}$ of $A^{\ast}(M)$ so that  for any mixed Hodge 
$\frak u$-representation ${\frak V}=(V, W_{\ast}, F^{\ast}, \Omega)$, we can construct an $\R$-VMHS  $({\bf E}_{\frak V}, {\bf W}_{\frak V \ast},{\bf F}^{\ast}_{\frak V})$  satisfying the following conditions:
\begin{enumerate}
\item ${\bf E}_{\frak V}$ is the trivial ${\mathcal C}^{\infty}$-vector bundle $M\times V$ with the flat connection $d+ \Omega_{\phi}$ where $\Omega_{\phi}=\phi(\Omega)\in A^{\ast}(M)\otimes {\rm End}(V)$.
\item $ {\bf W}_{\frak V \ast}$ is the filtration of  the vector bundle $M\times V$ induced by the weight filtration $W_{\ast}$ on $V$.
\item For some gauge transformation $a$ of the vector bundle $M\times V_{\C}$, we have ${\bf F}^{\ast}_{\frak V}=a {\bf F}^{\ast}$ such that ${\bf F}^{\ast}$ is the filtration of the vector bundle $M\times V_{\C}$ induced by the Hodge filtration $F^{\ast}$ on $V_{\C}$.
\item  $\varphi^{\prime}(\Omega)=a^{-1}da+a^{-1}\Omega_{\phi} a$.
\end{enumerate}
\end{prothm}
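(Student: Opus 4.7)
The plan is to produce two compatible quasi-isomorphisms $\phi:\mathcal{M}^*\to A^*(M)$ and $\varphi':\mathcal{M}^*_\C\to A^*(M)\otimes\C$ with complementary virtues---$\phi$ adapted to the real structure and weight filtration, $\varphi'$ strictly preserving the Hodge filtration---then transfer a mixed Hodge $\mathfrak{u}$-representation to the trivial bundle via each of them, and reconcile the two transfers by a globally defined gauge transformation $a$.

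First, I would build $\phi$ and $\varphi'$ together by induction along $\mathbb{K}=\mathcal{M}^*(0)\subset\mathcal{M}^*(1)\subset\cdots$ using the K\"ahler Hodge theory on $M$. At each stage the generating space $\V_k$ is identified with a suitable lift of cohomology classes coming from the $\partial\bar\partial$-lemma; on this space $\phi$ is defined by Morgan's procedure using real harmonic representatives and a real $d$-primitive, while $\varphi'$ is defined by lifts chosen so that elements of $F^r\mathcal{M}^*$ map into $F^r(A^*(M)\otimes\C)$. The inductive step simultaneously produces Morgan's $\R$-mixed Hodge structure $(W_*,F^*)$ on $\mathcal{M}^*$ and ensures that $\phi$ is defined over $\R$ while $\varphi'$ strictly preserves $F^*$.

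Second, the mixed Hodge $\mathfrak{u}$-representation yields a Maurer--Cartan element $\Omega\in F^0(\mathcal{M}^1\otimes\operatorname{End}(V))$ whose weight lies in $W_0$. Then $\Omega_\phi=\phi(\Omega)$ is a flat connection on the real trivial bundle $\mathbf{E}_\mathfrak{V}=M\times V$, and the weight filtration $\mathbf{W}_{\mathfrak{V}*}$ induced by $W_*$ consists of flat subbundles since $\Omega$ preserves $W_*$. Separately, $\varphi'(\Omega)\in F^0(A^1(M)_\C\otimes\operatorname{End}(V_\C))$, so its $(0,1)$-component preserves the Hodge filtration $\mathbf{F}^*$ of $M\times V_\C$ induced by $F^*$; this makes $\mathbf{F}^*$ a holomorphic subbundle filtration for the flat connection $d+\varphi'(\Omega)$, and Griffiths transversality follows directly from the $F^0$-membership of $\Omega$.

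Third, since $\phi\otimes\C$ and $\varphi'$ induce the same morphism on cohomology and differ by an exact derivation on the 1-minimal model, one obtains a canonical chain homotopy between them. I would apply this homotopy to $\Omega$ to produce a smooth gauge transformation $a:M\to\operatorname{Aut}(V_\C)$ satisfying $\varphi'(\Omega)=a^{-1}da+a^{-1}\Omega_\phi a$. Setting $\mathbf{F}^*_\mathfrak{V}=a\mathbf{F}^*$ then transports holomorphicity and Griffiths transversality from the $\varphi'$-picture back to the $\phi$-connection, yielding conditions (1)--(4). The VHS axiom (5) on each $\operatorname{Gr}^\mathbf{W}_k\mathbf{E}_\mathfrak{V}$ reduces to the same construction applied to the pure $\R$-Hodge structure $\operatorname{Gr}^W_k V$ with $\operatorname{Gr}\Omega$.

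The main obstacle will be step one: building $\varphi'$ so that it strictly preserves $F^*$ (not merely up to homotopy) while remaining coupled to the real map $\phi$ by a chain homotopy whose exponential is a globally defined smooth gauge on $M$. This requires careful inductive choices of Green-operator-based primitives at every stage of the 1-minimal tower, using the $\partial\bar\partial$-lemma to split each lift into type-compatible components.
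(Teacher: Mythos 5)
Your plan is essentially the paper's own construction: a real $1$-minimal model built from the $DD^c$-lemma, a complex one built from the $D'D''$-lemma carrying the bigrading that defines $F^\ast$ (the paper packages this as a bigraded model $\mathcal{N}^\ast$ plus a canonical isomorphism $\mathcal{I}$, so that $\varphi'=\varphi\circ\mathcal{I}$ is exactly your filtered map), a Green-operator homotopy between the two transfers of $\Omega$ whose integration yields the gauge $a$, and Griffiths transversality read off from $\varphi'(\Omega)\in F^0$. This matches Sections \ref{1-mKal}--\ref{MAIVM} specialized to trivial $\rho$ and trivial $T$, so the approach is correct and not materially different.
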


\begin{remark}\label{MIXMO4444}
For constructing Morgan's $\R$-mixed Hodge structure, in \cite[Section 6 and 7]{Mor}, Morgan makes:
\begin{enumerate}
\renewcommand{\labelenumi}{(\roman{enumi})}
   \item A bigraded complex $1$-minimal model ${\mathcal N}^{\ast}=\bigoplus {\mathcal N}^{\ast}(p,q)$.
\item A filtered real $1$-minimal model $( {\mathcal M}^{\ast},W_{\ast})$.

\item A filtration preserving isomorphism ${\mathcal I}: ({\mathcal M}^{\ast}\otimes \C,W_{\ast})\to  ({\mathcal N}^{\ast},W_{\ast})$ where 
\[W_{r}({\mathcal N}^{\ast})=\bigoplus_{p+q\le r} {\mathcal N}^{\ast}(p,q).\] 
\end{enumerate} 
For the filtration ${\mathcal F}^{\ast}$ on ${\mathcal M}^{\ast}$  so that 
\[F^{r} ({\mathcal M}^{\ast})={\mathcal I}^{-1}\left(\bigoplus_{p\ge r}{\mathcal M}^{\ast}(p,q)\right),
\]
$(W_{\ast},F^{\ast})$ is 
an $\R$-mixed Hodge structure on ${\mathcal M}^{\ast}$ (\cite[Section 8]{Mor}).
In this construction an isomorphism ${\mathcal I}$ is not unique and Morgan does not give explicit  one.

Our  construction  of "a special   $\R$-mixed Hodge structure" in this theorem is to give an explicit  ${\mathcal I}$ canonically determined by a K\"ahler metric $g$ without using a base point.

\end{remark}

\begin{remark}
 $\R$-VMHSs $({\bf E}_{\frak V}, {\bf W}_{\frak V \ast},{\bf F}^{\ast}_{\frak V})$ in this theorem are unipotent in the sense of Hain-Zucker \cite{HZ}.
 In \cite{HZ}, by using the mixed Hodge structure on the fundamental group derived from iterated integrals, Hain and Zucker constructed unipotent  $\R$-VMHSs associated with unipotent mixed Hodge representations of the fundamental group.
 Our construction is very similar to Hain-Zucker's construction since Morgan's mixed Hodge structure can be regarded as a mixed Hodge structure on the "tensor product" of  the fundamental group and  the field  $\R$ by Sullivan's de Rham homotopy theory (see \cite{Mor}). 
  But they are  different since Morgan's mixed Hodge structure on the $1$-minimal model is different from the mixed Hodge structure on the fundamental group derived from iterated integrals (see \cite[Section 8 and  9]{PS}).
 In fact, for our construction, we do not use "base point"
 unlike Hain-Zucker's construction.
Our construction  depends on the choice of a K\"ahler metric.
The maps $\phi:{\mathcal M}^{\ast}\to A^{\ast}(M)$ and $\varphi^{\prime} : {\mathcal M}^{\ast}_{\C}\to A^{\ast}(M)\otimes \C$ canonically  determined by a K\"ahler structure.
 An advantage of our construction is that we obtain an explicit globally defined connection forms $\Omega_{\phi}\in A^{\ast}(M)\otimes {\rm End}(V)$ and $\varphi^{\prime}(\Omega)\in A^{\ast}(M)\otimes {\rm End}(V\otimes \C)$.
 
 A more precise comparison between Hain-Zucker's construction and our construction is given in Section \ref{HZZZ}.
We can show that any unipotent $\R$-VMHS is isomorphic to  one which is constructed in Theorem (Prototype).
\end{remark}
\smallskip

\subsection{Main construction}
In  this paper we  give an extended version of  Theorem (Prototype) for obtaining non-unipotent $\R$-VMHSs.
Let $M$ be a compact K\"ahler manifold and $\rho:\pi_{1}(M,x)\to GL(V_{0})$ be a real valued representation.
Consider the real local system ${\bf E}_{0}=(\tilde{M} \times V_{0})/\pi_{1}(M,x)$ where $\tilde{M}$ is the universal covering of $M$.
We assume that $\bf E_{0}$ admits an $\R$-VHS $(\bf E_{0}, \bf F^{\ast})$ with a polarization $\bf S$.
Consider the bilinear form ${\bf S}_{x}:V_{0}\times V_{0}\to \R$.
Then we have $\rho(\pi_{1}(M,x))\subset T={\rm Aut}(V_{0},{\bf S}_{x})$.
We assume that $\rho(\pi_{1}(M,x))$ is Zariski-dense in $T$.

In this assumption, we will set up the main construction by the following way.
\begin{itemize}
\item Corresponding to $\rho:\pi_{1}(M,x)\to T$,   we consider the DGA $A^{\ast}(M,{\mathcal O}_{\rho})$ of differential forms on $M$ with values in a certain local system  equipped with  the $T$-action  defined by Deligne and Hain  \cite{Hai}.
\item We construct the canonical $T$-equivariant real $1$-minimal model $\phi:{\mathcal M}^{\ast}\to A^{\ast}(M,{\mathcal O}_{\rho})$ and the canonical  $T$-equivariant complex $1$-minimal model $\varphi:{\mathcal N}^{\ast}\to A^{\ast}(M,{\mathcal O}_{\rho}\otimes\C)$.
\item 
We construct an 
 $\R$-mixed Hodge structure on ${\mathcal M}^{\ast}$ by using  $\varphi:{\mathcal N}^{\ast}\to A^{\ast}(M,{\mathcal O}_{\rho}\otimes\C)$ which relates to the $T$-action.
\item For the $\R$-mixed Hodge structure on the dual Lie algebra $\frak u$ of ${\mathcal M}^{\ast}$, we define a {\em mixed Hodge $(T,\frak u)$-representation} $(V, W_{\ast}, F^{\ast}, \Omega)$ as a mixed Hodge $\frak u$-representation as in Definition \ref{mixuupr} with some conditions on $T$-actions.
\item  For any mixed Hodge 
$(T,\frak u)$-representation  ${\frak V}=(V, W_{\ast}, F^{\ast}, \Omega)$, we  construct an $\R$-VMHS  $({\bf E}_{\frak V}, {\bf W}_{\frak V \ast},{\bf F}^{\ast}_{\frak V})$ satisfying similar conditions as in Theorem (Prototype).
\end{itemize}
Obtained  $\R$-VMHSs $({\bf E}_{\frak V}, {\bf W}_{\frak V \ast},{\bf F}^{\ast}_{\frak V})$ can  be non-unipotent.
For each irreducible  representation $V_{\alpha}$ of $T$, we have a  $\R$-VHS on  ${\bf E}_{\alpha}=(\tilde{M} \times V_{\alpha})/\pi_{1}(M,x)$.
Such variations appear in $Gr^{\bf W}_{k}({\bf E}_{\frak V})$.
We notice that our construction is closely related to Eyssidieux-Simpson's construction in \cite{ES}.
By our construction, we can construct $\R$-VMHSs which are very similar to Eyssidieux-Simpson's VMHSs.
\smallskip
\subsection{Arrangement of the paper}

\smallskip
In Section \ref{premho}--\ref{HOK}, we will give basics of the main objects of this paper.
The main part of this paper is Section \ref{1-mKal}--\ref{HZZZ}.
In Section \ref{1-mKal}, we will give details of constructions of canonical $1$-minimal models and mixed Hodge structures.
In Section \ref{MAIVM}, we will give the definition of mixed Hodge $(T,\frak u)$-representations ${\frak V}=(V, W_{\ast}, F^{\ast}, \Omega)$ and details of constructions of $\R$-VMHSs $({\bf E}_{\frak V}, {\bf W}_{\frak V \ast},{\bf F}^{\ast}_{\frak V})$.
In Section \ref{SUBSMMM} and Section \ref{DGLMH}, we will give  techniques of producing mixed Hodge $(T,\frak u)$-modules.
In section \ref{DGLMH}, inspired by Eyssidieux-Simpson's work in \cite{ES},
we will give $\R$-VMHSs starting from any $T$-module, by using the deformation theory of differential graded Lie algebras.
In section \ref{HZZZ}, we  show that any unipotent $\R$-VMHS is isomorphic to  one which is constructed in Theorem (Prototype).
\smallskip
 {\bf  Acknowledgements.} 

The author  would  like to thank  B. Klingler  for  helpful comments and discussions on an earlier version of this paper.
He also would  like to thank P. Eyssidieux, R. Hain, M. Nori and  C. Simpson for useful conversations at Luminy. 
He also would  like to thank J. Prihdham for explaining his work.

\section{Variations of mixed Hodge structures}\label{premho}
\subsection{Mixed Hodge structures}
\begin{definition}
An {\em$\R$-Hodge structure} of weight $n$ on a $\R$-vector space $V$ is a  bigrading 
\[V_{\C}=\bigoplus_{p+q=n}V^{p,q}
\]
on the complexification $V_{\C}=V\otimes \C$
such that
\[\overline{V^{p,q}}=V^{q,p},
\]
or equivalently, a finite decreasing filtration $F^{\ast}$ on $V_{\C}$ 
such that 
\[F^{p}(V_{\C})\oplus \overline {F^{n+1-p}(V_{\C})}=V_{\C}
\]
for each $p$. 

\end{definition}
An $\R$-Hodge structure of weight $n$ corresponds to a rational  representation $h:U(1)\to GL(V)$.
This correspondence is given by 
\[V^{p,q}=\{v\in V_{\C}\vert h(t)v=t^{n-2q}v\}
\]
for $p+q=n$.

\begin{definition}
A polarization of an $\R$-Hodge structure of weight $n$ is a $(-1)^{n}$-symmetric
bilinear form $S:V\times V\to \R$ so that:
\begin{enumerate}
\item The decomposition $V_{\C}=\bigoplus_{p+q=n}V^{p,q}$ is orthogonal for the sesquilinear form $S:V_{\C}\times \overline{V_{\C}}\to \C$ .
\item $h:V_{\C}\times V_{\C}\to \C$ defined as $h(u,v)=S(Cu,\bar v)$ is a positive-definite hermitian form where $C$ is the Weil operator $C_{\vert V^{p,q}}=(\sqrt{-1})^{p-q}$.

\end{enumerate} 

\end{definition}

Suppose $V$ is finite-dimensional.
Consider the homomorphism $h:U(1)\to GL(V)$ associated with  an $\R$-Hodge structure of weight $n$.
Then for a polarization $S$, we have $h(U(1))\subset {\rm Aut}(V,S)$.

\begin{definition}
An {\em$\R$-mixed  Hodge structure} on an $\R$-vector space $V$ is a pair $(W_{\ast},F^{\ast})$
such that:
\begin{enumerate}
\item $W_{\ast}$ is an increasing filtration on $V$.
\item $F^{\ast}$ is a decreasing filtration  on $V_{\C}$  such that
the filtration on $Gr_{n}^{W} V_{\C}$ induced by $F^{\ast}$ is an $\R$-Hodge structure of weight $n$.
\end{enumerate}
We call $W_{\ast}$ the weight filtration and $F^{\ast}$ the Hodge filtration.
\end{definition}
\begin{proposition}[\cite{Del},\cite{CKS},\cite{Mor}]\label{BIGG}
Let $(W_{\ast},F^{\ast})$ be an $\R$-mixed  Hodge structure on an $\R$-vector space $V$.
Define $V^{p,q}=R^{p,q}\cap L^{p,q}$ where
\[R^{p,q}=W_{p+q}(V_{\C})\cap F^{p}(V_{\C})\] and
\[L^{p,q}=W^{p+q}(V_{\C})\cap \overline{F^{q}(V_{\C})}+\sum_{i\ge 2} W_{p+q-i}(V_{\C})\cap \overline{F^{q-i+1}(V_{\C})}.\]
Then we have 
  the   bigrading $V_{\C}=\bigoplus V^{p,q}$ such that
  \[ \overline{V^{p,q}}=V^{q,p} \quad {\rm mod} \bigoplus_{r+s<p+q} V^{r,s}, \]
\[W_{i}(V_{\C})=\bigoplus_{p+q\le i}V^{p,q} \qquad {\rm  and}\qquad
F^{i}(V_{\C})=\bigoplus_{p\ge i} V^{p,q}.\]

\end{proposition}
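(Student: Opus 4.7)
The plan is to prove the three assertions by reducing to the pure case on each graded piece $Gr^W_n V_\C$ and then reassembling $V_\C$ by induction on the length of the weight filtration. The guiding idea is that $V^{p,q}$ should turn out to be the unique lift to $V_\C$ of the $(p,q)$ Hodge piece of the pure Hodge structure on $Gr^W_{p+q} V_\C$ which is compatible with $F^\ast$ and, modulo strictly lower weight, with the conjugate filtration.

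First I would establish the key lemma: the natural projection
\[
\pi : V^{p,q} \longrightarrow (Gr^W_{p+q} V_\C)^{p,q}
\]
is a linear isomorphism. For surjectivity, I observe that every correction term $W_{p+q-i} \cap \overline{F^{q-i+1}}$ with $i \geq 2$ lies inside $W_{p+q-2}$ and therefore vanishes in $Gr^W_{p+q}$. Consequently $\pi(R^{p,q})$ equals $F^p(Gr^W_{p+q} V_\C)$ by strictness of $F^\ast$ with respect to $W_\ast$, and $\pi(L^{p,q}) = \overline{F^q}(Gr^W_{p+q} V_\C)$; their intersection is exactly $(Gr^W_{p+q} V_\C)^{p,q}$ by the definition of a pure Hodge structure of weight $p+q$, and strictness again supplies the lift. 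Injectivity is the delicate point: given $v \in V^{p,q} \cap W_{p+q-1}$, I would write $v = u_1 + \sum_{i \geq 2} w_i$ according to the definition of $L^{p,q}$ and examine the images of $v$ successively in the pure graded quotients $Gr^W_{p+q-k} V_\C$. In a pure $\R$-Hodge structure of weight $n$ one has $F^p \cap \overline{F^q} = 0$ whenever $p+q > n$; the shift by $q-i+1$ (rather than $q$ or $q-i$) in the correction terms is exactly what makes these vanishings kick in at each weight and kill the contribution of $v$ layer by layer, forcing $v = 0$ after finitely many steps.

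With the key lemma in hand, the assembly is routine. To prove $W_n(V_\C) = \bigoplus_{p+q \leq n} V^{p,q}$, I would induct on $n$: given $v \in W_n(V_\C)$, use the key lemma to lift the pure Hodge decomposition of the image of $v$ in $Gr^W_n V_\C$ to an element of $\bigoplus_{p+q=n} V^{p,q}$, and apply the inductive hypothesis to the remainder, which now lies in $W_{n-1}(V_\C)$. The identity $F^r(V_\C) = \bigoplus_{p \geq r} V^{p,q}$ then follows from the inclusion $V^{p,q} \subseteq F^p(V_\C)$ together with a dimension count on each graded piece, using strictness of $F^\ast$ once more. Finally, the approximate conjugate symmetry reduces, modulo $W_{p+q-1}(V_\C) = \bigoplus_{r+s < p+q} V^{r,s}$, to the exact Hodge symmetry $\overline{V^{p,q}} = V^{q,p}$ on the pure graded piece $Gr^W_{p+q} V_\C$, which is immediate.

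The main obstacle will be the injectivity step in the key lemma: one must exploit the precise combinatorics of the shifted correction terms $W_{p+q-i} \cap \overline{F^{q-i+1}}$ to absorb lower-weight errors one graded piece at a time, and verify that any other choice of index shift would destroy the needed cancellation. Carrying out this iterative peeling cleanly, without index confusion across the simultaneous interplay of $W_\ast$, $F^\ast$ and $\overline{F^\ast}$, is the most technical part of the argument; once it is done, the rest is formal bookkeeping.
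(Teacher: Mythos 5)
The paper offers no proof of Proposition \ref{BIGG}: it is quoted verbatim from the literature (Deligne, Cattani--Kaplan--Schmid, Morgan), so there is no internal argument to compare against. Your proposal is the standard proof of the Deligne splitting, and its overall architecture --- the key lemma that $\pi:V^{p,q}\to (Gr^W_{p+q}V_\C)^{p,q}$ is bijective, followed by induction on the weight filtration to reassemble $W_\ast$, $F^\ast$ and the approximate conjugation symmetry --- is correct. Your injectivity argument is right on target: if $v\in V^{p,q}\cap W_{p+q-1}$, then peeling off graded quotients shows $[v]\in F^{p}\cap\overline{F^{\,q-k+2}}$ in $Gr^W_{m-k}$ (only the correction terms with $i\le k$ survive there), and $p+(q-k+2)=(m-k)+2>m-k$ kills it at every stage, so $v\in\bigcap_k W_{m-k}=0$.

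The one soft spot is surjectivity. You assert that $\pi(R^{p,q})\cap\pi(L^{p,q})=(Gr^W_{p+q}V_\C)^{p,q}$ and that ``strictness again supplies the lift,'' but $\pi(R^{p,q}\cap L^{p,q})$ is in general smaller than $\pi(R^{p,q})\cap\pi(L^{p,q})$: one must actually produce a single element of $F^{p}\cap W_{m}$ that simultaneously admits a decomposition witnessing membership in $L^{p,q}$. This is not a consequence of strictness alone; it requires the same downward induction as injectivity, now using the opposedness $F^{p}\oplus\overline{F^{\,q-k+1}}=Gr^W_{m-k}V_\C$ (valid because $p+(q-k+1)=(m-k)+1$) to split the error $v_{0}-u_{0}\in W_{m-k-1}$ at each level into a piece absorbed into $F^{p}\cap W_{m-k-1}$ (correcting the representative) and a piece placed into $\overline{F^{\,q-k}}\cap W_{m-k-1}$ (one of the allowed summands of $L^{p,q}$). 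Once this is spelled out the rest of your assembly --- the induction giving $W_{n}=\bigoplus_{p+q\le n}V^{p,q}$, the dimension count for $F^{r}$, and the reduction of $\overline{V^{p,q}}\equiv V^{q,p}$ to Hodge symmetry on the pure quotient --- is sound.
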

The  bigrading $V_{\C}=\bigoplus V^{p,q}$ is called the  bigrading of an $\R$-mixed  Hodge structure $(W_{\ast},F^{\ast})$.
If $\overline{V^{p,q}}=V^{q,p} $, then we say that  $(W_{\ast},F^{\ast})$ is  $\R$-split.
We have the converse statement.

\begin{proposition}[\cite{CKS},\cite{Mor}]\label{spmix}
Let $V$ be an $\R$-vector space.
We suppose that we have a bigrading $V_{\C}=\bigoplus V^{p,q}$ 
such that $\bigoplus_{p+q\ge n} V^{p,q}$ is an $\R$-subspace and 
  \[\overline{V^{p,q}}=V^{q,p} \qquad {\rm mod} \bigoplus_{r+s<p+q} V^{r,s}. \]
Then the filtrations $W$ and $F$ such that
$ W_{i}(V_{\C})=\bigoplus_{p+q\le i}V^{p,q}$ and 
$F^{i}(V_{\C})=\bigoplus_{p\ge i} V^{p,q}$
give an $\R$-mixed Hodge structure on $V$.
 
\end{proposition}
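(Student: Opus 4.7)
The plan is to verify the three ingredients of an $\R$-mixed Hodge structure for the induced filtrations: (a) that each $W_i(V_\C) = \bigoplus_{p+q \leq i} V^{p,q}$ is conjugation-stable and so descends to a filtration of $V$, (b) that $F^\ast$ is a decreasing filtration on $V_\C$, which is immediate from the definition, and (c) that on each associated graded piece $Gr^W_n(V_\C)$ the induced Hodge filtration gives an $\R$-Hodge structure of weight $n$.

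For (a), I would verify directly that $W_i(V_\C)$ is stable under complex conjugation by using the strictness built into the mod-condition. Given $v \in V^{p,q}$ with $p+q \leq i$, the hypothesis yields $\bar v = v' + u$ with $v' \in V^{q,p}$ and $u \in \bigoplus_{r+s < p+q} V^{r,s}$; since $q+p = p+q \leq i$ and $r+s < p+q \leq i$, both terms lie in $W_i(V_\C)$. A complex subspace closed under the conjugation of $V_\C$ is the complexification of its intersection with $V$, so setting $W_i(V) := W_i(V_\C) \cap V$ produces the desired real increasing filtration on $V$.

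For (c), I would use the natural isomorphism $Gr^W_n(V_\C) \cong \bigoplus_{p+q=n} V^{p,q}$ coming from the decomposition $W_n(V_\C) = W_{n-1}(V_\C) \oplus \bigoplus_{p+q=n} V^{p,q}$, so that the induced Hodge filtration reads $F^p(Gr^W_n) = \bigoplus_{p' \geq p,\, p' + q' = n} V^{p',q'}$. The key observation is that the mod-congruence $\overline{V^{p,q}} \equiv V^{q,p}$ becomes an honest equality on $Gr^W_n$: for $v \in V^{p,q}$ with $p+q=n$, the error term $u \in \bigoplus_{r+s < n} V^{r,s} \subset W_{n-1}(V_\C)$ vanishes modulo $W_{n-1}$. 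Thus $Gr^W_n(V_\C)$ carries the genuine bigrading $\bigoplus_{p+q=n} V^{p,q}$ with $\overline{V^{p,q}} = V^{q,p}$, and the weight-$n$ Hodge decomposition $F^p \oplus \overline{F^{n+1-p}} = Gr^W_n(V_\C)$ follows by summing distinct bigraded summands.

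I do not anticipate a substantive obstacle; the proposition is essentially a dictionary between the bigraded and filtered descriptions of a mixed Hodge structure, giving a converse to Proposition~\ref{BIGG}. The only point demanding care is step (a), where one must ensure the lower-order error terms produced by conjugation never escape $W_i$. This is precisely what is guaranteed by the strict inequality $r + s < p + q$ in the hypothesis, which places $u$ inside $W_{p+q-1} \subseteq W_i$.
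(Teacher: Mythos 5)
Your proof is correct; the paper gives no argument for this proposition (it is quoted from \cite{CKS} and \cite{Mor}), and your verification --- conjugation-stability of $W_{\ast}$ extracted from the congruence, plus the observation that the congruence $\overline{V^{p,q}}\equiv V^{q,p}$ becomes an honest equality on $Gr^{W}_{n}$, yielding the weight-$n$ decomposition $F^{p}\oplus\overline{F^{n+1-p}}$ --- is exactly the standard one from those references. The only remark worth adding is that your step (a) derives the reality of $W_{i}(V_{\C})$ purely from the mod-condition, so the stated hypothesis that $\bigoplus_{p+q\ge n}V^{p,q}$ be an $\R$-subspace is never actually used in your argument.
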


Indeed, these two propositions give an equivalence of categories between $\R$-mixed Hodge structures and bigradings as in Proposition \ref{spmix} see \cite{CKS}.

For a vector space $V$ with a filtration $W_{\ast}$, we denote by
${\rm Aut}(V,W_{\ast})$ the group of automorphisms preserving the filtration $W_{\ast}$
and by ${\rm Aut}_{1}(V,W_{\ast})$ the subgroup of ${\rm Aut}(V,W_{\ast})$  consisting of $b\in {\rm Aut}(V,W_{\ast})$ which induces the identity map on $Gr_{W}^{n}V$ for each $n$.
For  an $\R$-mixed Hodge structure $(W_{\ast},F^{\ast})$ on an $\R$-vector space $V$,
for any $b\in  {\rm Aut}_{1}(V_{\C},W_{\ast})$ the pair $(W_{\ast}, bF^{\ast})$ is also an $\R$-mixed Hodge structure.
\begin{proposition}[\cite{CKS}]\label{spitto}
Let $(W_{\ast},F^{\ast})$ be an $\R$-mixed  Hodge structure on an $\R$-vector space $V$. 
Then there exists  $b\in  {\rm Aut}_{1}(V_{\C},W_{\ast})$ so that $(W_{\ast},bF^{\ast})$ is  $\R$-split.

\end{proposition}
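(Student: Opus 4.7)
The plan is to exploit the canonical bigrading $V_{\C}=\bigoplus V^{p,q}$ furnished by Proposition \ref{BIGG}, and to modify it by an automorphism in $\Aut_{1}(V_{\C},W_{\ast})$ so that the conjugation relation $\overline{V^{p,q}}\equiv V^{q,p}$ (which, by Proposition \ref{BIGG}, holds only modulo $\bigoplus_{r+s<p+q}V^{r,s}$) becomes an exact equality. Once that is achieved, Proposition \ref{spmix} applied to the modified bigrading produces an $\R$-split mixed Hodge structure having the same weight filtration $W_{\ast}$, which is the conclusion we want.

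First, I would quantify the failure of $\R$-splitness: for each $(p,q)$ and each $v\in V^{p,q}$, write $\overline{v}=v^{q,p}+\sum_{r+s<p+q}v^{r,s}$ with $v^{r,s}\in V^{r,s}$; the collection of maps $v\mapsto v^{r,s}$ assembles into a ``defect'' element $\delta$ lying in $\bigoplus_{a+c<0}\mathrm{End}(V_{\C})^{a,c}$, where $\mathrm{End}(V_{\C})^{a,c}$ denotes the induced bigrading on $\mathrm{End}(V_{\C})$ (i.e.\ endomorphisms sending $V^{p,q}$ into $V^{p+a,q+c}$). Any $\xi\in\bigoplus_{a+c<0}\mathrm{End}(V_{\C})^{a,c}$ is nilpotent on the finite filtration $W_{\ast}$ and strictly lowers weights; consequently $b:=\exp(\xi)$ preserves $W_{\ast}$ and acts as the identity on every $\mathrm{Gr}^{W}_{n}V_{\C}$, so $b\in\Aut_{1}(V_{\C},W_{\ast})$ automatically, and the pair $(W_{\ast},bF^{\ast})$ is again an $\R$-mixed Hodge structure (as noted right before the statement).

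Next, I would set up the equation for $\xi$. The new bigrading associated to $(W_{\ast},bF^{\ast})$ is $\{bV^{p,q}\}$, and $\R$-splitness demands $\overline{b}\,\overline{V^{p,q}}=b\,V^{q,p}$, equivalently $(b^{-1}\overline{b})\cdot\overline{V^{p,q}}=V^{q,p}$. Decompose $\xi=\sum_{k\ge 1}\xi_{-k}$ according to total weight drop $a+c=-k$, and solve for the components $\xi_{-k}$ inductively on $k$. At each stage the equation for $\xi_{-k}$ is a linear equation whose inhomogeneous part depends only on $\delta$ and on the previously determined $\xi_{-1},\dots,\xi_{-k+1}$; its solvability follows from the fact that Proposition \ref{BIGG} already gives matching to leading order in every weight, so what remains at each step is a pure adjustment in the strictly lower-weight subspaces where no constraints from the $(p,q)\leftrightarrow(q,p)$ symmetry have yet been imposed. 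Finiteness of $W_{\ast}$ terminates the recursion.

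The main obstacle is the bookkeeping of the inductive step: one must check that the linear equations for $\xi_{-k}$ are genuinely solvable and compatible across all pairs $(p,q)$ simultaneously, and that the corrections can be chosen so that the resulting bigrading satisfies $\overline{bV^{p,q}}=bV^{q,p}$ on the nose rather than merely modulo still lower terms. This amounts to an iterative cancellation argument exploiting the bihomogeneity of $\mathrm{End}(V_{\C})$ and the reality of the original filtration $W_{\ast}$; after it is carried through, Proposition \ref{spmix} applied to $\{bV^{p,q}\}$ yields the $\R$-split mixed Hodge structure $(W_{\ast},bF^{\ast})$.
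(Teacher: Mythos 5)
The paper gives no proof of this proposition: it is quoted from Cattani--Kaplan--Schmid \cite{CKS} as a known result, so there is nothing internal to compare against. Judged on its own, your strategy is the standard one (correct the Deligne bigrading of Proposition \ref{BIGG} by a weight-lowering automorphism $b=\exp(\xi)$ and feed the corrected bigrading to Proposition \ref{spmix}), and the easy parts are right: any $\xi\in\bigoplus_{a+c<0}{\rm End}(V_{\C})^{a,c}$ does give $b=\exp(\xi)\in{\rm Aut}_{1}(V_{\C},W_{\ast})$, and if $\{bV^{p,q}\}$ is exactly conjugation-symmetric then $(W_{\ast},bF^{\ast})$ is $\R$-split.

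The gap is that the entire content of the lemma sits in the step you defer to ``bookkeeping'': the solvability of the recursion for $\xi_{-k}$. Writing $b=\exp(\xi)$, the weight-$(-k)$ component of the equation $(b^{-1}\bar b)\overline{V^{p,q}}=V^{q,p}$ has the form $\bar\xi_{-k}-\xi_{-k}=c_{-k}$ with $c_{-k}$ determined by the defect $\delta$ and by $\xi_{-1},\dots,\xi_{-k+1}$. The operator $\xi\mapsto\bar\xi-\xi$ is not surjective: its image consists exactly of the anti-real elements, so at each stage one must \emph{verify} that $\overline{c_{-k}}=-c_{-k}$. This is a genuine compatibility condition, not an automatic consequence of ``no constraints having been imposed yet,'' and checking it is delicate because complex conjugation does not preserve the bigrading ${\rm End}(V_{\C})^{a,c}$ (it only sends $(a,c)$ to $(c,a)$ modulo lower total weight, precisely because the original bigrading fails to be $\R$-split). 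Your write-up asserts solvability and then names this verification as ``the main obstacle,'' i.e.\ it is left unproved; as it stands the argument establishes nothing beyond the setup. A second, smaller omission: to conclude $\R$-splitness from exact conjugation symmetry of $\{bV^{p,q}\}$ you should invoke the uniqueness of the bigrading in Proposition \ref{BIGG} (so that $\{bV^{p,q}\}$ really is the canonical bigrading of $(W_{\ast},bF^{\ast})$); this is true but should be said. The clean fix is simply to cite \cite[Proposition 2.20]{CKS}, where $b$ is produced in the sharper form $e^{-i\delta}$ with $\delta$ real and of Hodge type $\bigoplus_{a,c\le -1}$, or to carry out the reality check above in full.
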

In fact, in this paper our important $\R$-mixed  Hodge structures appear as $(W_{\ast},b^{-1}F^{\ast}_{sp})$ for  $\R$-split
$\R$-mixed Hodge structures $(W_{\ast},F^{\ast}_{sp})$.
\subsection{Variations of Hodge structures}\label{VHSDEC}
Let $M$ be a complex manifold.
\begin{definition}
A {\em real variation of Hodge structure} ($\R$-VHS) of weight $n\in \Z$ over $M$ is a pair $({\bf E}, {\bf F}^{\ast})$ so that:
\begin{enumerate}
\item ${\bf E}$ is a local system of  finite-dimensional $\R$-vector spaces.
\item  ${\bf F}^{\ast}$ is a decreasing filtration of the holomorphic vector bundle ${\bf E}\otimes_{\R}{\mathcal O}_{M}$.
\item The Griffiths transversality $D{\bf F}^{r}\subset A^{1}(M, {\bf F}^{r-1})$ holds where $D$ is the flat connection associated with the local system ${\bf E}_{ \C}$.
\item For any $x\in M$, $(E_{x}, F^{\ast}_{x})$ is a $\R$-Hodge structure of weight $n$.
\end{enumerate}
 
\end{definition}
For an $\R$-VHS of weight $n$, we have the decomposition ${\bf E}_{\C}=\oplus_{p+q=n} {\bf E}^{p,q}$ of $\mathcal C^{\infty}$ vector bundles  so that ${\bf F}^{r}=\oplus_{p\ge r} {\bf E}^{p,q}$.
By the Griffiths transversality, the differential $D$ on $A^{\ast}(M,{\bf E}_{\C})$  decomposes 
$D=\partial+\theta+ \bar\partial +\bar\theta$ so that:
\[\partial: A^{a,b}({\bf E}^{c,d})\to  A^{a+1,b}({\bf E}^{c,d}) ,
\]
\[\bar\partial: A^{a,b}({\bf E}^{c,d})\to  A^{a,b+1}({\bf E}^{c,d}) ,
\]
\[\theta: A^{a,b}({\bf E}^{c,d})\to  A^{a+1,b}({\bf E}^{c-1,d+1}) 
\]
and 
\[\bar\theta: A^{a,b}({\bf E}^{c,d})\to  A^{a,b+1}({\bf E}^{c+1,d-1}).
\]
We define 
\[A^{\ast}(M,{\bf E}_{\C})^{P,Q}=\bigoplus_{a+c=P,b+d=Q}A^{a,b}({\bf E}^{c,d}),
\]
$D^{\prime}=\partial+ \bar\theta$ and $D^{\prime\prime}=\bar\partial+\theta$.
Then we have the double complex 
\[(A^{\ast}(M,{\bf E}_{\C})^{P,Q}, D^{\prime},D^{\prime\prime})\]
 as  the usual  Dolbeault complex.

\begin{definition}
A polarization of an  $\R$-VHS is a non-degenerate pairing $\bf S: {\bf E}\times {\bf E}\to \R$ so that for any $x\in M$ ${\bf S}_{x}$ is a polarization of the $\R$-hodge structure $(E_{x}, F^{\ast}_{x})$. 

\end{definition}

\subsection{Variations of mixed Hodge structures}
\begin{definition}
A {\em real variation of mixed Hodge structure} ($\R$-VMHS)  over $M$ is  $({\bf E}, {\bf F}^{\ast},{\bf W}_{\ast})$ so that:
\begin{enumerate}
\item ${\bf E}$ is a local system of  finite-dimensional $\R$-vector spaces.
\item ${\bf W}_{\ast}$ is an increasing filtration of the local system ${\bf E}$.
\item  ${\bf F}^{\ast}$ is a decreasing filtration of the holomorphic vector bundle ${\bf E}\otimes_{\R}{\mathcal O}_{M}$.

\item The Griffiths transversality $D{\bf F}^{r}\subset A^{1}(M, {\bf F}^{r-1})$ holds where $D$ is the flat connection associated with the local system ${\bf E}_{ \C}$.
\item For any $k\in \Z$, the local system $Gr^{\bf W}_{k}({\bf E})$ with the filtration induced by $ {\bf F}^{\ast}$ is an $\R$-VHS of weight $k$.

\end{enumerate}

\end{definition}

\begin{example}\label{triVMHSS}
We introduce essentially trivial cases:
\begin{itemize}
\item Let $V$ be a finite-dimensional $\R$-vector space with an $\R$-mixed Hodge structure.
Regarding $V$ as the trivial vector bundle over $M$,
$V$  is an $\R$-VMHS.
\item Let ${\bf E}_{1},\dots , {\bf E}_{k}$ be $\R$-VHSs.
Then the direct sum ${\bf E}_{1}\oplus \dots \oplus {\bf E}_{k}$ is an $\R$-VMHS.
\item Let $V_{1},\dots, V_{k}$ be finite-dimensional $\R$-vector spaces with  $\R$-mixed Hodge structures and ${\bf E}_{1},\dots , {\bf E}_{k}$ be $\R$-VHSs.
Then $\bigoplus V_{i}\otimes  {\bf E}_{i}$ is an $\R$-VMHS.

\end{itemize}

\end{example}

\section{Representations of reductive algebraic groups} 
\subsection{Coordinate rings}
Let $ T$ be a reductive algebraic group over $\R$ and $\R[T]$ the  coordinate ring  of $T$.
Let $\{V_{\alpha}\}$ be the set of isomorphism classes of irreducible representations  of $T$.
Then as a $T\times T$-module, we have an isomorphism  
$\Theta:\bigoplus_{\alpha}V_{\alpha}^{\ast}\otimes V_{\alpha}\cong \R[T]$  sending  
$f\otimes v\in V^{\ast}_{\alpha}\otimes V_{\alpha}$ to the function $T\ni t\to f(tv)\in\R$
(see \cite[Section 3]{Hai} or \cite[Theorem 27.3.9]{TY}).
Hence we have the algebra structure on $\bigoplus_{\alpha}V_{\alpha}^{\ast}\otimes V_{\alpha}$.

The multiplication on $\bigoplus_{\alpha}V_{\alpha}^{\ast}\otimes V_{\alpha}$ is given by decompositions of tensor products as the following way.
For irreducible representations $V_{\alpha}$, $V_{\beta}$, take an irreducible decomposition $V_{\alpha}\otimes V_{\beta}=\bigoplus_{i}V_{\gamma_{i}}$.
We should remark that representations $\{\gamma_{i}\}$ have multiplicities.
 For $f\otimes v\in V_{\alpha}^{\ast}\otimes V_{\alpha}$ and  $g\otimes w\in V_{\beta}^{\ast}\otimes V_{\beta}$, for this decomposition,  we take $f\otimes g=\sum h_{i}$ and $v\otimes w=\sum u_{i}$ so that $h_{i}\in V^{\ast}_{\gamma_{i}}$ and $u_{i}\in V_{\gamma_{i}}$.
Then, for $t\in T$, we have
\begin{multline*}
\left(\Theta(f\otimes v)\Theta(g\otimes w)\right)(t)=f(tv)g(tw)
=(f\otimes g)(t(v\otimes w))\\
=\left(\sum_{i} h_{i}\right)\left(\sum_{j} tu_{j}\right)=\sum_{i} h_{i}(tu_{i}).
\end{multline*}
Take the element
\[\sum_{\alpha}\left(\sum_{\gamma_{i}=\alpha} h_{i}\otimes u_{i}\right)\in \bigoplus_{\alpha}V_{\alpha}^{\ast}\otimes V_{\alpha}.
\]
Then, we have
\[\Theta\left(\sum_{\alpha}\left(\sum_{\gamma_{i}=\alpha} h_{i}\otimes u_{i}\right)\right) =\sum_{i} h_{i}(tu_{i})
\]
Thus, the multiplication induced by the isomorphism $\Theta:\bigoplus_{\alpha}V_{\alpha}^{\ast}\otimes V_{\alpha}\cong \R[T]$ is given by 
\[(f\otimes v)\cdot (g\otimes w)=\sum_{\alpha}\left(\sum_{\gamma_{i}=\alpha} h_{i}\otimes u_{i}\right)\in \bigoplus_{\alpha}V_{\alpha}^{\ast}\otimes V_{\alpha}.
\]
\begin{example}\label{SL2}
Let $T=SL_{2}(\R)$ and $V$ be the standard representation on $\R^{2}$.
We have 
\[\bigoplus_{\alpha}V_{\alpha}^{\ast}\otimes V_{\alpha}=\bigoplus_{k} S^{k}V^{\ast}\otimes S^{k}V
\]
Then $V\otimes V=\wedge^{2}V\oplus S^{2}V$.
For $f\otimes v\in V^{\ast}\otimes V$ and  $g\otimes w\in V^{\ast}\otimes V$
\begin{multline*}
(f\otimes v)\cdot (g\otimes w)\\
=( f\wedge g)\otimes (v\wedge w)+(f\times g)\otimes  (g\times w)\in \wedge^{2}V^{\ast}\otimes \wedge^{2}V\oplus S^{2}V^{\ast}\otimes S^{2}V
\end{multline*}
where $\wedge^{2} V=\R$.
\end{example}

\begin{example}
Let $T=SL_{n}(\R)$ with $n\ge 3$ and $V$ be the standard representation on $\R^{n}$. 
It is known that  any irreducible representation  $V_{\alpha}$ is given by
\[{\mathbb S}_{\lambda} V
\]
so that  ${\mathbb S}_{\lambda}$ is the Schur functor associated with a partition $\lambda$ of $d$ see \cite{FH}.
For certain set of partitions of numbers $\Lambda$, we have a bijection
\[\Lambda\ni \lambda\mapsto {\mathbb S}_{\lambda} V\in \{V_{\alpha}\}.
\]
Precisely, $\Lambda$ is the set of partitions $\lambda=(\lambda_{1},\lambda_{2},\dots, \lambda_{k})$ 
with $k<n$(see \cite[Section 15]{FH}).
Thus, each  irreducible representation  $V_{\alpha}$ can be subscriptable as  $V_{(\lambda_{1},\lambda_{2},\dots, \lambda_{k})}$.
If $n\ge 5$,  by the Littlewood–Richardson rule, we have
\[V_{(2,1)}\otimes V_{(2,1)}=V_{(4,2)}\oplus V_{(4,1,1)}\oplus V_{(3,3)}\oplus V_{(3,2,1)}^{2\oplus}\oplus V_{(3,1,1,1)}\oplus V_{(2,2,2)}\oplus V_{(2,2,1,1)}
\]
(see \cite[Section 15, Appendix A]{FH}).
 For $f\otimes v\in V_{(2,1)}^{\ast}\otimes V_{(2,1)}$ and  $g\otimes w\in V_{(2,1)}^{\ast}\otimes V_{(2,1)}$,
 we take 
 \[f\otimes g=h_{(4,2)}+h_{(4,1,1)}+ h_{(3,3)}+ h_{(3,2,1)}^{1}+ h_{(3,2,1)}^{2} +h_{(3,1,1,1)}+ h_{(2,2,2)}+h_{(2,2,1,1)}
 \]
 and 
 \[v\otimes w=u_{(4,2)}+u_{(4,1,1)}+ u_{(3,3)}+ u_{(3,2,1)}^{1}+ u_{(3,2,1)}^{2} +u_{(3,1,1,1)}+ u_{(2,2,2)}+u_{(2,2,1,1)}
 \]
 corresponding to this decomposition.
Then the  multiplication induced by the isomorphism $\Theta:\bigoplus_{\alpha}V_{\alpha}^{\ast}\otimes V_{\alpha}\cong \R[T]$ is given by 
\begin{multline*}
(f\otimes v)\cdot (g\otimes w)\\
=h_{(4,2)}\otimes u_{(4,2)}+h_{(4,1,1)}\otimes u_{(4,1,1)}+ h_{(3,3)}\otimes u_{(3,3)} \\
+h_{(3,2,1)}^{1}\otimes u_{(3,2,1)}^{1}+ h_{(3,2,1)}^{2}\otimes u_{(3,2,1)}^{2}\\
 +h_{(3,1,1,1)}\otimes u_{(3,1,1,1)}+ h_{(2,2,2)}\otimes u_{(2,2,2)}+h_{(2,2,1,1)}\otimes u_{(2,2,1,1)} \in \bigoplus_{\alpha}V_{\alpha}^{\ast}\otimes V_{\alpha}.
\end{multline*}
Since there exists a multiplicity on the irreducible representation  $V_{(3,2,1)}$, we should remark 
\[h_{(3,2,1)}^{1}\otimes u_{(3,2,1)}^{1}+ h_{(3,2,1)}^{2}\otimes u_{(3,2,1)}^{2}\in V_{(3,2,1)}^{\ast}\otimes V_{(3,2,1)}.
\]

\end{example}

\subsection{Automorphism groups of polarizations}\label{AUre}

Let $V$ be a real vector space and $F^{\ast}$ be an $\R$-Hodge structure of weight $n$ with a polarization $S$.
Take the automorphism group $T={\rm Aut}(V, S)$.
Consider $T$ as an algebraic group over $\R$.
We have $T(\C)=Sp_{2m}(\C)$ when the weight $n$ is odd or  
 $T(\C)=O(m, \C)$ when $n$ is even.
 Precisely, for the decomposition $V\otimes \C=\oplus V^{p,q}$ with
$k=\sum_{p\,\, odd}\dim V^{p,q}$ and $  l=\sum_{p\,\, even }\dim V^{p,q}$,
we have $T=Sp_{2k}(\R)$ when $n$ is odd or  
 $T=O(k,l)$ when $n$ is even.
It is known that  any irreducible representation  $V_{\alpha}$ is given by
\[{\mathbb S}_{\lambda} V\cap V^{[d]}
\]
so that  ${\mathbb S}_{\lambda}$ is the Schur functor associated with a partition $\lambda$ of $d$ and 
$V^{[d]}$ is the intersection of the kernels of all contractions $V^{\otimes d}\to V^{\otimes (d-2)}$ see \cite{FH}.
Moreover, for certain set of partitions of numbers $\Lambda$, we have a bijection
\[\Lambda\ni \lambda\mapsto {\mathbb S}_{\lambda} V\cap V^{[d]}\in \{V_{\alpha}\}.
\]
Precisely, $\Lambda$ is the set of partitions $\lambda$ so that  the Young diagrams of $\lambda$ have at most $m$ rows in case $T(\C)=Sp_{2m}(\C)$ (see \cite[Section 17.3]{FH}) or  so that the sum of the lengths of the first two columns of the Young diagrams of $\lambda$ is at most $m$ in case $T(\C)=O(m,\C)$ (see \cite[Section 19.5]{FH}).
By this, $V_{\alpha}$ admits an $\R$-Hodge structure which is induced by the $\R$-Hodge structure on $V$.
These Hodge structures are described by the following way.
Consider the homomorphism $h:U(1)\to  GL(V)$ associated with the $\R$-Hodge structure on $V$.
Then we have $h(U(1))\subset T$.
Thus,  the $\R$-Hodge structure on $V_{\alpha}$ is determined by the homomorphism $\alpha\circ h: U(1)\to GL(V_{\alpha})$.

For an irreducible representation  $V_{\alpha}$ of $T$, 
we consider the $T\times T$-module $V_{\alpha}^{\ast}\otimes V_{\alpha}$.
Take a presentation $V_{\alpha}={\mathbb S}_{\lambda} V\cap V^{[d]}$.
Then  $V_{\alpha}^{\ast}$ admits an $\R$-Hodge structure of weight $-nd$ induced by $F^{\ast}$ and $V_{\alpha}$ admits an  $\R$-Hodge structure of weight $nd$ induced by $F^{\ast}$ and so $V_{\alpha}^{\ast}\otimes V_{\alpha}$ admits an $\R$-Hodge structure of weight $0$.
By the isomorphism  $\Theta:\bigoplus_{\alpha}V_{\alpha}^{\ast}\otimes V_{\alpha}\cong \R[T]$, 
we obtain the $\R$-Hodge structure of weight $0$ on $ \R[T]$.
Consider the homomorphisms $h:U(1)\to   T$ associated with the $\R$-Hodge structure $F^{\ast}$ on $V$.
Then, the $\R$-Hodge structure of weight $0$ on $ \R[T]$ is determined by the $U(1)$-action induced by the homomorphism $h\times h: U(1)\to  T\times T$ and the $T\times T$-module structure on $ \R[T]$.
Since the multiplication  $ \R[T]\otimes  \R[T]\to   \R[T]$ is $T\times T$-equivariant,
we can say that  the multiplication $ \R[T]\otimes  \R[T]\to   \R[T]$ is a morphism of $\R$-Hodge structures.
Moreover,  for each  irreducible representation  $V_{\alpha}$ with the $\R$-Hodge structure induced by $F^{\ast}$, 
the corresponding co-module structure $V_{\alpha}\to  V_{\alpha}\otimes \R[T]$ is a morphism of  $\R$-Hodge structures.
 
 Let $t\in T$.
We consider the  varied  $\R$-Hodge structure $tF^{\ast}$ on $V$.
This  $\R$-Hodge structure corresponds to the homomorphism $tht^{-1}: U(1)\to   T$.
On $\bigoplus_{\alpha}V_{\alpha}^{\ast}\otimes V_{\alpha}\cong \R[T]$, we change the $\R$-Hodge structure on each left $V_{\alpha}^{\ast}$ (resp. right $V_{\alpha}$) to the one  associated with the varied $\R$-Hodge structure $tF^{\ast}$.
Then, by the $T\times T$-equivariance of the multiplication $ \R[T]\otimes  \R[T]\to   \R[T]$, we can also say that the multiplication $ \R[T]\otimes  \R[T]\to   \R[T]$ is a morphism of $\R$-Hodge structures for such alternative $\R$-Hodge structure on $\bigoplus_{\alpha}V_{\alpha}^{\ast}\otimes V_{\alpha}\cong \R[T]$.
This is important for considering $\R$-VHSs.
We notice that $T$ acts on transitively on the classifying space of polarized  $\R$-Hodge structures on $V$ with the fixed Hodge numbers (see \cite{Car} for instance).


\section{DGA $A^{\ast}(M,{\mathcal O}_{\rho})$}\label{Hadg}
Let $M$ be a ${\mathcal C}^{\infty}$-manifold, $T$  a reductive algebraic group over $\R$ and  $\rho:\pi_{1}(M,x)\to T$ be a real valued representation.
Assume that $\rho(\pi_{1}(M,x))$ is Zariski-dense in $T$.
Let $\{V_{\alpha}\} $ be the set of isomorphism classes of irreducible representations of $T$.
Consider the local systems ${\bf E}_{\alpha}=(\tilde{M} \times V_{\alpha})/\pi_{1}(M,x)$.
Denote by $A^{\ast}(M,{\bf E}_{\alpha})$  the space of ${\bf E}_{\alpha}$-valued $C^{\infty}$-differential forms.
Consider the cochain complex 
\[A^{\ast}(M,{\mathcal O}_{\rho})=\bigoplus_{\alpha} A^{\ast}(M, {\bf E}^{\ast}_{\alpha})\otimes V_{\alpha}\]
with the differential  $D=\bigoplus_{\alpha} D_{\alpha}$.
Then by the wedge product and the multiplication on $\bigoplus_{\alpha} V_{\alpha}^{\ast}\otimes V_{\alpha}\cong \R[T]$, $\left(A(M, {\mathcal O}_{\rho}), D\right)$ is a cohomologically connected  DGA with the $T$-action given by each  irreducible representation $V_{\alpha}$.

Let $V$ a finite-dimensional $T$-module.
We consider the algebra $A^{\ast}(M,{\mathcal O}_{\rho})\otimes {\rm End}(V)$ with the $T$-action and the $T$-module $V\otimes \R[T]$.
Then, by the isomorphism  $\left(V\otimes \R[T]\right)^{T}\cong V$, 
 the space $(A^{\ast}(M,{\mathcal O}_{\rho})\otimes {\rm End}(V))^{T}$  is identified with $A^{\ast}(M, {\rm End} ({\bf E}))$
where ${\bf E}$ is  the local system   associated with $\rho:\pi_{1}(M,x)\to T$ and the  $T$-module structure on $V$.
Hence, an element $\Omega\in (A^{1}(M,{\mathcal O}_{\rho})\otimes {\rm End}(V))^{T}$ satisfying the Maurer-Cartan equation $D\Omega+\frac{1}{2}[\Omega,\Omega]=0$ gives the deformed flat connection $D+\Omega$ on the ${\mathcal C}^{\infty}$ vector bundle $\bf E$.

\section{Hodge theory on compact K\"ahler manifolds}\label{HOK}
Let $M$ be a compact complex manifold.
We assume that $M$ admits a K\"ahler metric $g$. 
Let $({\bf E}, {\bf F}^{\ast})$  be an $\R$-VHS of weight $n$ over $M$ with a polarization $\bf S$.
We consider the double complex $(A^{\ast}(M,{\bf E}_{\C})^{P,Q}, D^{\prime},D^{\prime\prime})$ as in Subsection \ref{VHSDEC}.
We define the differential operator $D^{c}=\sqrt{-1}(D^{\prime\prime}-D^{\prime})$ on the real valued differential forms $A^{\ast}(M,{\bf E})$.
By the Hermitian  metric  on ${\bf E}_{\C}$ associated with the polarization $\bf S$ and the K\"ahler metric $g$, we define the adjoints
$D^{\ast}$, $(D^{\prime})^{\ast}$, $(D^{\prime\prime})^{\ast}$
and $(D^{c})^{\ast}$ of  differential operators.
For the K\"ahler form $\omega$ associated with $g$, we  consider the adjoint operator $\Lambda$ of the Lefschetz operator $A^{\ast}(M,{\bf E})\ni \alpha\mapsto \omega\wedge \alpha \in A^{\ast+2}(M, {\bf E})$.
In the same way as   the usual K\"ahler identity, we have 
\[[\Lambda, D]=-(D^{c})^{\ast}
\]
and  this equation gives
\[\Delta_{D}=2\Delta_{D^{\prime}}=2\Delta_{D^{\prime\prime}}
\]
where $\Delta_{D}$, $\Delta_{D^{\prime}}$ and $\Delta_{D^{\prime\prime}}$  are the Laplacian operators (see \cite{Zu}).
Write
\[{\mathcal H}^{r}(M,{\bf E})={\rm ker}(\Delta_{D})_{\vert  A^{r}(M,{\bf E}) } \qquad {\rm and}\ \qquad {\mathcal H}^{P,Q}(M,{\bf E}_{\C})={\rm ker}(\Delta_{D^{\prime\prime}})_{\vert ( A^{\ast}(M,{\bf E}_{\C}))^{P,Q} }.\]
Then we have the Hodge  decomposition 
\[{\mathcal H}^{r}(M,{\bf E}_{\C})=\bigoplus_{P+Q=n+r}{\mathcal H}^{P,Q}(M,{\bf E}_{\C}).
\]
Since $\Lambda $ is a map of degree $-2$, by the K\"ahler identity,  we have the following useful equations
\[{\mathcal H}^{1}(M,{\bf E})={\rm ker} D_{\vert  A^{1}(M,{\bf E}) }\cap {\rm ker} D^{c} _{\vert  A^{1}(M,{\bf E}) }
\]
and 
\[{\mathcal H}^{P,Q}(M,{\bf E}_{\C})={\rm ker}D^{\prime}_{\vert  A^{\ast}(M,{\bf E}_{\C})^{P,Q}} \cap {\rm ker}D^{\prime\prime}_{\vert  A^{\ast}(M,{\bf E}_{\C})^{P,Q} }
\]
for $P+Q=1+n$.
By these equations, we can say that the $\R$-Hodge structure on ${\mathcal H}^{1}(M,{\bf E}) $ is independent of the K\"ahler metric $g$ and the polarization on $({\bf E}, {\bf F}^{\ast})$.
As the usual way,
we have the polarization 
\[{\mathcal H}^{1}(M,{\bf E})\times {\mathcal H}^{1}(M,{\bf E})\ni (\alpha,\beta)\mapsto \int \alpha\wedge \beta \wedge\omega^{\dim_{\C}M-1}\in \R
\]
which depends on  the K\"ahler metric $g$ and the polarization  $\bf S$  on $({\bf E}, {\bf F}^{\ast})$.

By the same argument as \cite[Section 5]{DGMS}, we have the following $DD^{c}$-Lemma and $D^{\prime}D^{\prime\prime}$-Lemma.
\begin{theorem}\label{DDc}
\begin{description}
\item[($DD^{c}$-Lemma)]
On $A^{\ast}(M,{\bf E})$,
\[{\rm im}D\cap {\rm ker}D^{c}={\rm ker}D\cap {\rm im}D^{c}={\rm im}DD^{c}.
\]
Moreover, there exists a linear map $F_{g}: {\rm im}D\cap {\rm ker}D^{c}\to A^{\ast-2}(M,{\bf E})$  so that $\alpha=DD^{c}F_{g}\alpha$ for $\alpha\in {\rm im}D\cap {\rm ker}D^{c}$.
\item[($D^{\prime}D^{\prime\prime}$-Lemma)]
On $A^{\ast}(M,{\bf E_{\C}})^{P,Q}$,
\[{\rm im}D^{\prime}\cap {\rm ker}D^{\prime\prime}={\rm ker}D^{\prime}\cap {\rm im}D^{\prime\prime}={\rm im}D^{\prime}D^{\prime\prime}.
\]
Moreover, there exists a linear map $F^{\prime}_{g}: {\rm im}D^{\prime}\cap {\rm ker}D^{\prime\prime}\to A^{\ast}(M,{\bf E}_{\C})^{P-1,Q-1}$  so that $\alpha=D^{\prime}D^{\prime\prime}F^{\prime}_{g}\alpha$ for $\alpha\in {\rm im}D^{\prime}\cap {\rm ker}D^{\prime\prime}$.

\end{description}
\end{theorem}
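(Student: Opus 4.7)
The plan is to transcribe the classical Deligne--Griffiths--Morgan--Sullivan proof into the twisted setting, exploiting the K\"ahler identity $[\Lambda,D]=-(D^{c})^{\ast}$ and the equality $\Delta_{D}=2\Delta_{D^{\prime}}=2\Delta_{D^{\prime\prime}}$ already recorded above. First I would let $G$ denote the Green's operator for $\Delta_{D}$ on $A^{\ast}(M,{\bf E}_{\C})$ and $H$ the harmonic projection, so that $\mathrm{id}=H+\Delta_{D}G$ and $G$ commutes with every operator that commutes with $\Delta_{D}$. Splitting $[\Lambda,D]=-(D^{c})^{\ast}$ by $(p,q)$-bidegree (with $\Lambda$ of bidegree $(-1,-1)$, $D^{\prime}$ of $(1,0)$, $D^{\prime\prime}$ of $(0,1)$) gives the pair of twisted K\"ahler identities
\[
[\Lambda,D^{\prime}]=\sqrt{-1}(D^{\prime\prime})^{\ast},\qquad [\Lambda,D^{\prime\prime}]=-\sqrt{-1}(D^{\prime})^{\ast},
\]
from which $D^{\ast}=[\Lambda,D^{c}]$ and $\Delta_{D^{c}}=\Delta_{D}$ follow formally; in particular $G$ commutes with each of $D$, $D^{\prime}$, $D^{\prime\prime}$, and $D^{c}$.

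Next I would prove the $D^{\prime}D^{\prime\prime}$-Lemma. Let $\alpha\in\mathrm{im}\,D^{\prime}\cap\ker D^{\prime\prime}$ in bidegree $(P,Q)$; then $D^{\prime}\alpha=0$, and being $D^{\prime}$-exact $\alpha$ is $L^{2}$-orthogonal to $\ker\Delta_{D}$, so $H(\alpha)=0$ and
\[
\alpha=\Delta_{D^{\prime\prime}}G\alpha=D^{\prime\prime}(D^{\prime\prime})^{\ast}G\alpha+(D^{\prime\prime})^{\ast}D^{\prime\prime}G\alpha=D^{\prime\prime}(D^{\prime\prime})^{\ast}G\alpha.
\]
Substituting $(D^{\prime\prime})^{\ast}=-\sqrt{-1}[\Lambda,D^{\prime}]$ and using $D^{\prime}\alpha=0$ together with $[G,D^{\prime}]=0$ yields $(D^{\prime\prime})^{\ast}G\alpha=\sqrt{-1}D^{\prime}\Lambda G\alpha$, hence $\alpha=\sqrt{-1}D^{\prime\prime}D^{\prime}\Lambda G\alpha=-\sqrt{-1}D^{\prime}D^{\prime\prime}\Lambda G\alpha$. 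Thus $F^{\prime}_{g}:=-\sqrt{-1}\Lambda G$ is the desired linear map, and $\mathrm{im}\,D^{\prime}\cap\ker D^{\prime\prime}\subseteq\mathrm{im}\,D^{\prime}D^{\prime\prime}$. The inclusion $\ker D^{\prime}\cap\mathrm{im}\,D^{\prime\prime}\subseteq\mathrm{im}\,D^{\prime}D^{\prime\prime}$ is the symmetric argument (interchange the roles of $D^{\prime}$ and $D^{\prime\prime}$ using the second K\"ahler identity), and the reverse inclusions $\mathrm{im}\,D^{\prime}D^{\prime\prime}\subseteq\mathrm{im}\,D^{\prime}\cap\ker D^{\prime\prime}$ and $\subseteq\ker D^{\prime}\cap\mathrm{im}\,D^{\prime\prime}$ are immediate from $(D^{\prime})^{2}=(D^{\prime\prime})^{2}=0$ and $D^{\prime}D^{\prime\prime}=-D^{\prime\prime}D^{\prime}$.

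The $DD^{c}$-Lemma is handled by the same recipe with $D$, $D^{c}$ in place of $D^{\prime}$, $D^{\prime\prime}$: for $\alpha\in\mathrm{im}\,D\cap\ker D^{c}$ on $A^{\ast}(M,{\bf E})$ one has $D\alpha=0$ and $H(\alpha)=0$, so $\alpha=DD^{\ast}G\alpha$; substituting $D^{\ast}=[\Lambda,D^{c}]$ and using $D^{c}\alpha=0$ together with $[G,D^{c}]=0$ yields $D^{\ast}G\alpha=-D^{c}\Lambda G\alpha$, so $\alpha=-DD^{c}\Lambda G\alpha$, and the assignment $F_{g}:=-\Lambda G$ does the job. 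The triple equality of sets is verified as in the previous paragraph. Throughout, everything stays internal to the real spaces $A^{\ast}(M,{\bf E})$ in the first statement (since $-\Lambda G$ preserves real forms) and to the prescribed bidegrees in the second.

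The only real subtlety in carrying out this plan is making sure that all the K\"ahler identities and the relation $\Delta_{D}=2\Delta_{D^{\prime}}=2\Delta_{D^{\prime\prime}}$ truly persist in the twisted situation; but that is precisely the content of Zucker's work \cite{Zu} cited above, so for us the identities are given and the argument is indeed, as the author signals, \emph{the same} as in \cite[Section 5]{DGMS}.
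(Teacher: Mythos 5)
Your argument is correct and is in essence the same Hodge-theoretic argument the paper invokes (it simply cites \cite[Section 5]{DGMS} and records an explicit homotopy operator). The one substantive difference is the choice of $F_{g}$: you derive $F_{g}=-\Lambda G$ and $F^{\prime}_{g}=-\sqrt{-1}\,\Lambda G$ from the twisted K\"ahler identities $[\Lambda,D^{c}]=D^{\ast}$ and $[\Lambda,D^{\prime}]=\sqrt{-1}(D^{\prime\prime})^{\ast}$, whereas the paper takes $F_{g}=D^{\ast}G_{D}(D^{c})^{\ast}G_{D^{c}}$ and $F^{\prime}_{g}=(D^{\prime})^{\ast}G_{D^{\prime}}(D^{\prime\prime})^{\ast}G_{D^{\prime\prime}}$ following \cite[Proof of (5.11)]{DGMS}. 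Both operators satisfy the required identity on ${\rm im}\,D\cap{\rm ker}\,D^{c}$ (resp.\ ${\rm im}\,D^{\prime}\cap{\rm ker}\,D^{\prime\prime}$), and the later uses in the paper (e.g.\ Remark \ref{ddcinvvv}) only rely on the uniqueness of the $D^{c}$-exact primitive, so either formula serves; your version is arguably shorter since it needs only one Green operator. One normalization to fix: with $G$ the Green operator of $\Delta_{D}$ and ${\rm id}=H+\Delta_{D}G$, the relation $\Delta_{D}=2\Delta_{D^{\prime\prime}}$ gives $\alpha=2\Delta_{D^{\prime\prime}}G\alpha$ for $\alpha$ with $H\alpha=0$, not $\alpha=\Delta_{D^{\prime\prime}}G\alpha$; so either insert the factor $2$ or use the Green operator $G_{D^{\prime\prime}}=2G$ of $\Delta_{D^{\prime\prime}}$ throughout. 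This is a constant and does not affect the structure of the proof.
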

In fact, we can write $F_{g}= D^{\ast}G_{D}(D^{c})^{\ast}G_{D^{c}}$ and $F^{\prime}_{g}=(D^{\prime})^{\ast}G_{D^{\prime}}(D^{\prime\prime})^{\ast}G_{D^{\prime\prime}}$ where $G_{D}$, $G_{D^{c}}$, $G_{D^{\prime}}$ and $G_{D^{\prime\prime}}$ are the Green operators (see \cite[Proof of (5.11)]{DGMS}).

We consider the sub-complexes
 \[{\rm ker}D^{c} \subset A^{\ast}(M,{\bf E}) \qquad {\rm and }\qquad {\rm ker}D^{\prime}\subset  A^{\ast}(M,{\bf E_{\C}}).\]
The $DD^{c}$-Lemma and $D^{\prime}D^{\prime\prime}$-Lemma 
imply the following  "formality" results (see \cite[Section 6]{DGMS}, also \cite[Section 7]{Gold}). 
\begin{corollary}\label{Forma}
\begin{itemize}
\item The inclusions \[{\rm ker}D^{c} \subset A^{\ast}(M,{\bf E})  \qquad {\rm and }\qquad  {\rm ker}D^{\prime}\subset  A^{\ast}(M,{\bf E_{\C}})\]
induce cohomology isomorphisms.
\item The quotients 
\[{\rm ker}D^{c} \to H^{\ast}( A^{\ast}(M,{\bf E}), D^{c})  \qquad {\rm and }\qquad  {\rm ker}D^{\prime}\to  H^{\ast}(A^{\ast}(M,{\bf E_{\C}}), D^{\prime})\]
 induce cohomology isomorphisms.
\item We have isomorphisms 
\[H^{\ast}( A^{\ast}(M,{\bf E}), D)\cong  H^{\ast}( A^{\ast}(M,{\bf E}), D^{c}))\]
 and 
\[ H^{\ast}(A^{\ast}(M,{\bf E_{\C}}), D) \cong H^{\ast}(A^{\ast}(M,{\bf E_{\C}}), D^{\prime}).\]
\end{itemize}

\end{corollary}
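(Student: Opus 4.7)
The plan is to adapt the formality argument of \cite{DGMS}, Section 6, to the VHS setting, using Theorem \ref{DDc} as the key input. The three bullets will be proved only for $(D, D^{c})$ acting on $A^{\ast}(M, {\bf E})$; the statements for $(D^{\prime}, D^{\prime\prime})$ on $A^{\ast}(M, {\bf E}_{\C})$ follow by exactly the same arguments using the $D^{\prime}D^{\prime\prime}$-Lemma. First I will note the anticommutation $DD^{c} + D^{c}D = 0$, which is immediate from $D = D^{\prime}+ D^{\prime\prime}$, $D^{c} = \sqrt{-1}(D^{\prime\prime}-D^{\prime})$, and $(D^{\prime})^{2} = (D^{\prime\prime})^{2} = D^{\prime}D^{\prime\prime} + D^{\prime\prime}D^{\prime} = 0$. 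This implies that $D$ preserves both ${\rm ker}\, D^{c}$ and ${\rm im}\, D^{c}$, so $D$ descends to a differential on ${\rm ker}\, D^{c}$ and on $H^{\ast}(A^{\ast}(M,{\bf E}), D^{c}) = {\rm ker}\, D^{c}/{\rm im}\, D^{c}$, making both the inclusion $\iota$ and the quotient $q$ into morphisms of complexes.

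For the first bullet, surjectivity of $\iota_{\ast}$ is shown by replacing a $D$-closed $\alpha$ with $\beta := \alpha + D\gamma$, where the $DD^{c}$-Lemma applied to $D^{c}\alpha \in {\rm ker}\, D \cap {\rm im}\, D^{c}$ produces $\gamma$ with $DD^{c}\gamma = D^{c}\alpha$; then $D^{c}\beta = D^{c}\alpha + D^{c}D\gamma = D^{c}\alpha - DD^{c}\gamma = 0$, while $[\beta]=[\alpha]$ in $H^{\ast}(A^{\ast}(M,{\bf E}), D)$. Injectivity of $\iota_{\ast}$ is immediate from ${\rm ker}\, D^{c} \cap {\rm im}\, D = {\rm im}\, DD^{c}$: a $D$-exact element $\beta$ of ${\rm ker}\, D^{c}$ is of the form $D(D^{c}\mu)$ with $D^{c}\mu \in {\rm ker}\, D^{c}$.

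For the second bullet, the surjectivity of $q_{\ast}$ is shown in the same style: given $k \in {\rm ker}\, D^{c}$ with $Dk \in {\rm im}\, D^{c}$, the $DD^{c}$-Lemma gives $Dk = DD^{c}\sigma$, and then $k - D^{c}\sigma \in {\rm ker}\, D \cap {\rm ker}\, D^{c}$ represents the same class as $k$ after projecting to $H^{\ast}(A^{\ast}, D^{c})$. Injectivity of $q_{\ast}$ is analogous: if $k = Dk_{1} + D^{c}\mu$ with $k_{1} \in {\rm ker}\, D^{c}$, then $D^{c}\mu \in {\rm ker}\, D \cap {\rm im}\, D^{c}$, so $D^{c}\mu = DD^{c}\nu$, and $k = D(k_{1} + D^{c}\nu)$ with $k_{1} + D^{c}\nu \in {\rm ker}\, D^{c}$.

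For the third bullet, I will establish the equality $D({\rm ker}\, D^{c}) = D^{c}({\rm ker}\, D)$: for $\alpha \in {\rm ker}\, D^{c}$, $D\alpha$ lies in ${\rm ker}\, D^{c}\cap {\rm im}\, D = {\rm im}\, DD^{c}$, so $D\alpha = DD^{c}\gamma = -D^{c}(D\gamma)$ with $D\gamma \in {\rm ker}\, D$; the reverse inclusion is symmetric. Combining the first bullet for $D$ with its symmetric version for $D^{c}$ (the inclusion ${\rm ker}\, D \hookrightarrow A^{\ast}(M,{\bf E})$ being a $D^{c}$-quasi-isomorphism), both $H^{\ast}(A^{\ast}(M,{\bf E}), D)$ and $H^{\ast}(A^{\ast}(M,{\bf E}), D^{c})$ are identified with $({\rm ker}\, D \cap {\rm ker}\, D^{c})/D({\rm ker}\, D^{c})$. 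No real obstacle appears once Theorem \ref{DDc} is in hand; the only care needed is in tracking signs from $DD^{c} = -D^{c}D$ and in verifying that the constructions respect the real structure in the $(D,D^{c})$ version.
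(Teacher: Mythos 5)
Your proposal is correct and is essentially the paper's own argument: the paper proves this corollary only by citing the formality argument of \cite[Section 6]{DGMS} (and \cite[Section 7]{Gold}), which is precisely the $DD^{c}$-Lemma manipulation you carry out, including the identification of both cohomologies with $({\rm ker}\,D\cap{\rm ker}\,D^{c})/D({\rm ker}\,D^{c})$. All four quasi-isomorphism verifications and the anticommutation $DD^{c}+D^{c}D=0$ check out in the VHS setting, since $(D^{\prime})^{2}=(D^{\prime\prime})^{2}=D^{\prime}D^{\prime\prime}+D^{\prime\prime}D^{\prime}=0$ holds for the decomposition of Subsection \ref{VHSDEC}.
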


\begin{remark}\label{ddcinvvv}
By $DD^{c}$-Lemma and $D^{\prime}D^{\prime\prime}$-Lemma, on $0$-forms $A^{0}(M,{\bf E}_{\C})$, 
we have
\[{\rm ker}D={\rm ker}D^{\prime}={\rm ker}D^{\prime\prime}={\rm ker}D^{c}={\rm ker}D^{\prime}D^{\prime\prime}.
\]
By this, if $\alpha \in A^{0}(M,{\bf E}_{\C})$ satisfies $D^{\prime}D^{\prime\prime}\alpha=0$ (resp. $DD^{c}\alpha=0$), then we have $D^{\prime}\alpha=0$ (resp. $D^{c}\alpha=0$)
and so  for $\beta\in {\rm im}D\cap {\rm ker}D^{c}\cap A^{2}(M,{\bf E})$ (resp. ${\rm im}D^{\prime}\cap {\rm ker}D^{\prime\prime}\cap A^{2}(M,{\bf E}_{\C})^{P,Q}$), a $D^{c}$-exact (resp. $D^{\prime}$-exact) $1$-form $D^{c}\alpha$ (resp. $D^{\prime}\alpha$) satisfying $\beta=DD^{c}\alpha$ (resp. $\beta=D^{\prime}D^{\prime\prime}\alpha$) is unique.
Hence the maps  \[D^{c}F_{g}: {\rm im}D\cap {\rm ker}D^{c}\cap A^{2}(M,{\bf E}) \to A^{1}(M,{\bf E})\] and \[D^{\prime}F^{\prime}_{g}: {\rm im}D^{\prime}\cap {\rm ker}D^{\prime\prime}\cap A^{2}(M,{\bf E}_{\C})^{P,Q}\to A^{1}(M,{\bf E}_{\C})^{P,Q-1}\]
  do not depend on the choice of a K\"ahler metric $g$.

Let $f:M^{\prime}\to M$ be a holomorphic map from a compact K\"ahler manifold $M^{\prime}$.
Then we have the pull-back $\R$-VHS $(f^{\ast}{\bf E}, f^{\ast}{\bf F}^{\ast})$ over $M^{\prime}$.
Thus, we also define the maps $F_{g}$ and $F^{\prime}_{g}$ for the differential  forms on $M^{\prime}$ with values in $f^{\ast}{\bf E}$.
Consider the pull-back map $f^{\ast}:A^{\ast}(M,{\bf E})\to A^{\ast}(M^{\prime}, f^{\ast}{\bf E})$.
Then, by the above argument, we can say 
\[f^{\ast}\circ  D^{c}F_{g}=D^{c}F_{g}\circ f^{\ast}  \qquad  {\rm on} \qquad  {\rm im}D\cap {\rm ker}D^{c}\cap A^{2}(M,{\bf E})\] and 
 \[f^{\ast}\circ  D^{\prime}F^{\prime}_{g}=D^{\prime}F^{\prime}_{g}\circ f^{\ast} \qquad   {\rm  on} \qquad  {\rm im}D^{\prime}\cap {\rm ker}D^{\prime\prime}\cap A^{2}(M,{\bf E}_{\C})^{P,Q}.\]
\end{remark}

\section{$1$-minimal models  on compact K\"ahler manifolds}\label{1-mKal}
\subsection{Assumptions}
Let $M$ be a compact K\"ahler manifold and $\rho:\pi_{1}(M,x)\to GL(V_{0})$ be a real valued representation.
Consider the real local system ${\bf E}_{0}=(\tilde{M} \times V_{0})/\pi_{1}(M,x)$ where $\tilde{M}$ is the universal covering of $M$.
We assume that $\bf E_{0}$ admits an $\R$-VHS $(\bf E_{0}, \bf F^{\ast})$ of weight $N_{0}$ with a polarization $\bf S$.
Consider the bilinear form ${\bf S}_{x}:V_{0}\times V_{0}\to \R$.
Then we have $\rho(\pi_{1}(M,x))\subset T={\rm Aut}(V_{0},{\bf S}_{x})$.
We assume that  $\rho(\pi_{1}(M,x))$ is Zariski-dense in $T$.

\subsection{Summary  of this section}
In this section, we will give the canonical mixed Hodge structure on the $1$-minimal model of the DGA  $A^{\ast}(M,{\mathcal O}_{\rho})$ by the following way:
\begin{enumerate}
\item We consider the DGA  $A^{\ast}(M,{\mathcal O}_{\rho})$ associated with the representation  $\rho:\pi_{1}(M,x)\to T$.
We take the structure of bidifferential bigraded algebra  on $A^{\ast}(M,{\mathcal O}_{\rho}\otimes \C)$ by using the variation on ${\bf E}_{0}$. (Subsection \ref{DGKAA})
\item We construct a $T$-equivariant "real" $1$-minimal model $\phi:{\mathcal M}^{\ast}\to A^{\ast}(M,{\mathcal O}_{\rho})$ with a grading ${\mathcal M}^{\ast}=\bigoplus {\mathcal M}^{\ast}_{k}$. (Subsection \ref{dedec})
\item We construct a $T$-equivariant "complex" $1$-minimal model $\varphi:{\mathcal N}^{\ast}\to A^{\ast}(M,{\mathcal O}_{\rho}\otimes \C)$ with a bigrading ${\mathcal N}^{\ast}=\bigoplus ({\mathcal N}^{\ast})^{P,Q}$. (Subsection \ref{dbd})

\item We take a $T$-equivariant isomorphism ${\mathcal I}:{\mathcal M}^{\ast}_{\C}\to {\mathcal N}$ which is compatible with filtrations $W_{k}({\mathcal M}^{\ast})=\bigoplus_{i\le k} {\mathcal M}^{\ast}_{i}$ and $W_{k}({\mathcal N}^{\ast})=\bigoplus_{P+Q\le k} ({\mathcal N}^{\ast})^{P,Q}$
and a $T$-equivariant homotopy $H:{\mathcal M}^{\ast}_{\C}\to A^{\ast}(M,{\mathcal O}_{\rho}\otimes \C)\otimes [t,dt]$ from  $\varphi \circ {\mathcal I}$ to $\phi$.
(Subsection \ref{isho})
\item By $F^{r}({\mathcal M}^{\ast}_{\C})={\mathcal I}^{-1}(\bigoplus_{P\ge r} ({\mathcal N}^{\ast})^{P,Q})$, we define an $\R$-mixed Hodge structure on ${\mathcal M}^{\ast}$.
(Subsection \ref{MIXMIX})
\end{enumerate}

Avoiding the arguments on $T$, we can find these constructions in \cite{DGMS} and \cite{Mor} for the usual de Rham complex $A^{\ast}(M)$.

\subsection{DGAs $A^{\ast}(M,{\mathcal O}_{\rho})$ on compact K\"ahler manifolds}\label{DGKAA}

Let $\{V_{\alpha}\} $ be the set of isomorphism classes of irreducible representations of $T$.
Consider the local systems ${\bf E}_{\alpha}=(\tilde{M} \times V_{\alpha})/\pi_{1}(M,x)$.
Then, for certain set $\Lambda$ of partitions of numbers, we have a bijection
\[\Lambda\ni \lambda\mapsto {\mathbb S}_{\lambda}{\bf E}_{0}\cap {\bf E}^{[d]}_{0}\in \{{\bf E}_{\alpha}\} \,\,\, ({\rm resp.}\,\, {\mathbb S}_{\lambda}V_{0}\cap V^{[d]}_{0}\in\{V_{\alpha}\}).
\]
Precisely, $\Lambda$ is the set of partitions $\lambda$ so that  the Young diagrams of $\lambda$ have at most $m$ rows in case $T(\C)=Sp_{2m}(\C)$ (see \cite[Section 17.3]{FH}) or  so that the sum of the lengths of the first two columns of the Young diagrams of $\lambda$ is at most $m$ in case $T(\C)=O(m,\C)$ (see \cite[Section 19.5]{FH}).
Hence, any ${\bf E}_{\alpha}$ (resp. $V_{\alpha}$) admits a polarized $\R$-VHS (resp.  $\R$-Hodge structure) induced by $({\bf E}_{0}, {\bf F}^{\ast})$ (resp. $(V_{0}, {\bf F}^{\ast}_{x})$).
We put the $\R$-Hodge structure on $\R[T]$ of weight $0$ induced by $(V_{0}, {\bf F}^{\ast}_{x})$ as in Section \ref{AUre}.
Then,
for each  irreducible representation  $V_{\alpha}$, 
the corresponding co-module structure $V_{\alpha}\to  V_{\alpha}\otimes \R[T]$ is a morphism of  $\R$-Hodge structures.
By the $\R$-Hodge structure on $\R[T]$,
we have the bigrading $\C[T]=\bigoplus_{p} \C[T]^{p,-p}$.

We consider the DGA $A^{\ast}(M,{\mathcal O}_{\rho})$ as in Section \ref{Hadg}.
We define the bigrading on $A^{\ast}(M,{\mathcal O}_{\rho}\otimes \C)$ as the following way.
\begin{definition}
\[A^{\ast}(M,{\mathcal O}_{\rho}\otimes \C)^{P,Q}=\bigoplus_{\alpha}\bigoplus_{S+U=P,T+V=Q} A^{\ast}(M,{\bf E}^{\ast}_{\alpha}\otimes \C)^{S,T}\otimes (V_{\alpha}\otimes \C)^{U,V}.
\]
\end{definition}
By the argument in Subsection \ref{AUre}, on each case, we can say that the product on  $A^{\ast}(M,{\mathcal O}_{\rho}\otimes \C)$ is compatible with the bigrading.
Hence,  we have the bidifferential  bigraded algebra structure
\[(A^{\ast}(M,{\mathcal O}_{\rho}\otimes \C)^{P,Q}, D^{\prime},D^{\prime\prime}).
\]
We have
\[A^{R}(M,{\mathcal O}_{\rho}\otimes \C)=\bigoplus_{R=P+Q}A^{\ast}(M,{\mathcal O}_{\rho}\otimes \C)^{P,Q}.
\]

Since the co-module structure $V_{\alpha}\to  V_{\alpha}\otimes \R[T]$ is a morphism of  $\R$-Hodge structures,
the co-module structure 
\[A^{\ast}(M,{\mathcal O}_{\rho}\otimes \C)\to A^{\ast}(M,{\mathcal O}_{\rho}\otimes \C)\otimes \C[T]
\]
corresponding to the $T$-action preserves the bigradings on $\C[T]$ and $A^{\ast}(M,{\mathcal O}_{\rho}\otimes \C)$.

\begin{example}\label{reimsu}
Let $M$ be a compact Riemann surface of genus $g\ge 2$.
Then $M$ is a compact quotient ${\mathbb H}/\Gamma$ of the upper-half plane ${\mathbb H}$ by a discrete subgroup $\Gamma$ in $PSL_{2}(\R)$.
Take a lifting $\rho:\Gamma\to SL_{2}(\R)$ of the embedding of $\Gamma$ into $PSL_{2}(\R)$.
By the Borel density, $\rho(\Gamma)$ is Zariski-dense in $SL_{2}(\R)$.
Consider the local system ${\bf E}_{0}=({\mathbb H}\times \R^{2})/\Gamma$.
We regard ${\mathbb H}$ as the classifying space of polarized $\R$-Hodge structures of weight $1$ on $\R^{2}$. 
Then, considering  the identity map on ${\mathbb H}$  as a period map, the local system ${\bf E}_{0}$ admits a polarized   $\R$-VHS by taking certain decomposition  ${\bf E}_{0}\otimes \C={\bf E}_{0}^{1,0}\oplus{\bf E}_{0}^{0,1}$.
It is known that we can take ${\bf E}_{0}^{1,0}=K^{\frac{1}{2}}$ and ${\bf E}_{0}^{0,1}=K^{-\frac{1}{2}}$ where $K^{\frac{1}{2}}$ is a square-root of the canonical bundle $K$ on $M$ and $D^{\prime\prime}=\bar\partial+\theta$ such that  $\theta$ is $1 \in K\otimes {\rm Hom}(K^{\frac{1}{2}},K^{-\frac{1}{2}})\cong \C$ (see \cite{Hit}, \cite{Sim1} and \cite{Goll}).

We consider the DGA $A^{\ast}(M,{\mathcal O}_{\rho})$.
By Example \ref{SL2}, we have
\[A^{\ast}(M,{\mathcal O}_{\rho})=\bigoplus_{k=0}^{\infty}A^{\ast}(M, S^{k}{\bf E}_{0}^{\ast})\otimes S^{k}V_{0}
\]
where $V_{0}=({\bf E}_{0})_{x}$.
By the multiplication on the coordinate ring $\R[SL_{2}(\R)]$ (see Example \ref{SL2}), we have
\[\left(A^{\ast}(M, {\bf E}_{0}^{\ast})\otimes  V_{0}\right)\wedge \left(A^{\ast}(M, {\bf E}_{0}^{\ast})\otimes  V_{0}\right)\subset \left(A^{\ast}(M, \R)\otimes  \R \right)\oplus \left( A^{\ast}(M, S^{2}{\bf E}_{0}^{\ast})\otimes  S^{2}V_{0}\right).
\]

By the decomposition ${\bf E}_{0}\otimes \C={\bf E}_{0}^{1,0}\oplus{\bf E}_{0}^{0,1}$,
we have the decompositions $S^{k}{\bf E}_{0}^{\ast}={\bf E}_{0}^{k,0}\oplus {\bf E}_{0}^{k-1,1}\dots \oplus {\bf E}_{0}^{0,k}$ and $S^{k}V_{0}^{\ast}=V_{0}^{k,0}\oplus V_{0}^{k-1,1}\dots \oplus V_{0}^{0,k}$ where ${\bf E}_{0}^{p,q}=({\bf E}_{0}^{1,0})^{p}\times ({\bf E}_{0}^{0,1})^{q}$ and $V_{0}^{p,q}=(V_{0}^{1,0})^{p}\times (V_{0}^{0,1})^{q}$ such that the multiplications are symmetric products.
We have
\[A^{\ast}(M,{\mathcal O}_{\rho}\otimes \C)=\bigoplus_{k=0}^{\infty}\bigoplus_{0\le i,j\le k}A^{\ast,\ast}(M, ({\bf E}_{0}^{i,k-i})^{\ast})\otimes V_{0}^{j,k-j}.
\]
For the bigrading $A^{\ast}(M,{\mathcal O}_{\rho}\otimes \C)^{P,Q}$ as the above argument, 
an element in  \[A^{a,b}(M, ({\bf E}_{0}^{i,k-1})^{\ast})\otimes V_{0}^{j,k-j}\] is of type $(a-i+j, b+i-j)$.
By the multiplication on $\R[SL_{2}(\R)]$, we have
\begin{multline*}
\left(A^{a,b}(M, ({\bf E}_{0}^{1,0})^{\ast})\otimes V_{0}^{0,1}\right)\wedge \left(A^{c,d}(M, ({\bf E}_{0}^{0,1})^{\ast})\otimes V_{0}^{1,0}\right)\\
\subset  \left(A^{a+c,b+d}(M, \C)\otimes \C\right)\oplus  \left( A^{a+c,b+d}(M, ({\bf E}_{0}^{1,1})^{\ast})\otimes V_{0}^{1,1}\right).
\end{multline*}

\end{example}

\subsection{The $1$-minimal model associated with $DD^{c}$-Lemma}\label{dedec}
On the DGA $A^{\ast}(M,{\mathcal O}_{\rho})$ with the  $T$-action,
we also consider another differential $D^{c}=\sqrt{-1}(D^{\prime\prime}-D^{\prime})$.
Then, by Theorem \ref{DDc} and the last subsection, 
we can say that 
on $A^{\ast}(M,{\mathcal O}_{\rho})$, we have the equality
\[{\rm im}D\cap {\rm ker}D^{c}={\rm ker}D\cap {\rm im}D^{c}={\rm im}DD^{c}
\]
and  there exist a $T$-equivariant linear map $F_{g}: {\rm im}D\cap {\rm ker}D^{c}\to A^{\ast-2}(M,{\mathcal O}_{\rho})$  so that $\alpha=DD^{c}F_{g}\alpha$ for $\alpha\in {\rm im}D\cap {\rm ker}D^{c}$.
By using this, we construct the DGAs ${\mathcal M}^{\ast}(n)=\bigwedge ( \V_{1}\oplus\dots \oplus \V_{n})$ generated by  elements of degree $1$ and the homomorphisms $\phi_{n}:{\mathcal M}^{\ast}(n)\to {\rm ker}D^{c}\subset A^{\ast}(M,{\mathcal O}_{\rho})$ by the  inductive way.
More precisely, we will construct the components $\V_{n}$ and the maps $d: \V_{n}\to \sum_{i+j=n}\V_{i}\wedge \V_{j}$ by induction in the following.
\begin{itemize}
\item $\V_{1}={\rm ker}D\cap {\rm ker}D^{c}\cap A^{1}(M,{\mathcal O}_{\rho})$, the homomorphism
$\phi_{1}:\bigwedge \V_{1}\to {\rm ker}D^{c}$
so that on $\V_{1}$, $\phi_{1} $   is the natural inclusion $\V_{1}\hookrightarrow {\rm ker}D^{c}$.
\item For the  quotient map $q:{\rm ker}D\cap {\rm ker}D^{c}\to H^{\ast}({\rm ker}D^{c})$, 
\[\V_{2}={\rm ker} \left( q\circ \phi_{1}: \bigwedge^{2} \V_{1}\to H^{2}({\rm ker}D^{c})\right)
\]
Define the DGA ${\mathcal M}^{\ast}(2)=\bigwedge (\V_{1}\oplus \V_{2})$ with the differential $d$ so that $d$ is $0$ on $\V_{1}$ and $d$ on $\V_{2}$ is the natural inclusion $\V_{2}\hookrightarrow  \bigwedge^{2} \V_{1}$.
Define the homomorphism $\phi_{2}:{\mathcal M}^{\ast}(2)\to {\rm ker}D^{c}$ which is an extension of $\phi_{1}$ so that $\phi_{2}(v)=D^{c}F_{g}(\phi_{1}(dv))$.
\item For $n\ge 2$, consider the DGA ${\mathcal M}^{\ast}(n)=\bigwedge(\V_{1}\oplus \V_{2}\oplus\dots\oplus \V_{n})$ with the homomorphism $\phi_{n}:{\mathcal M}^{\ast}(n)\to {\rm ker}D^{c}$ we have constructed.
We can say that $\phi_{n}(v)\in {\rm ker}D\cap {\rm ker}D^{c}$ for $v\in \V_{1}$ and as an inductive hypothesis we assume $\phi_{n}(v)\in {\rm im} D^{c}$ for $v\in \V_{2}\oplus\dots\oplus \V_{n}$.

Let
\[\V_{n+1}={\rm ker} d_{ \vert \sum_{i+j=n+1} \V_{i}\wedge \V_{j}}.\]
 Define the extended DGA ${\mathcal M}^{\ast}(n+1)={\mathcal M}^{\ast}(n)\otimes \bigwedge \V_{n+1}$
so that the differential $d$ is defined on $\V_{n+1}$ as the natural inclusion $\V_{n+1}\hookrightarrow \sum_{i+j=n+1} \V_{i}\wedge \V_{j}$.
The homomorphism $\phi_{n+1}:{\mathcal M}^{\ast}(n+1)\to {\rm ker}D^{c}$ is defined by 
$\phi_{n+1}(v)=D^{c}F_{g}(\phi_{n}(dv))$ for $v\in \V_{n+1}$.

\end{itemize}

Let $\varinjlim {\mathcal M}^{\ast}(n)={\mathcal M}^{\ast}$ and $\phi=\varinjlim \phi_{n}:{\mathcal M}^{\ast}\to {\rm ker}D^{c}\subset A^{\ast}(M,{\mathcal O}_{\rho})$.

By the construction, 
each $\V_{i}$ is a $T$-module so that the map $\phi: {\mathcal M}^{\ast}\to  A^{\ast}(M,{\mathcal O}_{\rho})$ is $T$-equivariant.
Our construction is in fact the construction of a $1$-minimal model of ${\rm ker}D^{c}$ (see \cite[Theorem 13.1]{GMo}).
Since the inclusion $ {\rm ker}D^{c}\subset A^{\ast}(M,{\mathcal O}_{\rho})$ induces a cohomology isomorphism,
we can say that 
the map $\phi: {\mathcal M}^{\ast}\to  A^{\ast}(M,{\mathcal O}_{\rho})$ induces isomorphisms on $0$-th and first cohomology and an injection on second cohomology.
Define the multiplicative grading  ${\mathcal M}^{\ast}_{k}$ so that elements in $\V_{k}$ are of type   $k$.
Then, each ${\mathcal M}^{\ast}_{k}$ is a $T$-module and  we have $d{\mathcal M}^{\ast}_{k}\subset {\mathcal M}^{\ast}_{k}$.
Define the increasing filtration $W_{\ast}$ so that $W_{n}({\mathcal M}^{\ast})=\bigoplus_{k\le n}{\mathcal M}^{\ast}_{k}$.

\begin{remark}\label{sp11}
By the isomorphism $H^{\ast}(M,{\mathcal O}_{\rho})\cong H^{\ast}({\rm ker}D^{c}) $ as in  Theorem \ref{Forma}, the map 
$q\circ \phi_{1}: \bigwedge^{2} \V_{1}\to H^{2}({\rm ker}D^{c})$ is identified with the cup product 
\[\bigwedge^{2} H^{1}(M,{\mathcal O}_{\rho})\to H^{2}(M,{\mathcal O}_{\rho}).\]
For $n\ge 2$, forgetting the construction of the homomorphism $\phi_{n+1}$, the DGA 
${\mathcal M}^{\ast}(n+1)={\mathcal M}^{\ast}(n)\otimes \bigwedge \V_{n+1}$ is only  determined by the DGA ${\mathcal M}^{\ast}(n)$.
Thus, the DGA ${\mathcal M}^{\ast}$ is only determined by 
$H^{1}(M,{\mathcal O}_{\rho})$ with the cup product.
This property is called $1$-formality.

Consider the $\R$-Hodge structures on $H^{1}(M,{\mathcal O}_{\rho})$ and $H^{2}(M,{\mathcal O}_{\rho})$.
Then the cup product on $H^{1}(M,{\mathcal O}_{\rho})$  is a morphism of $\R$-Hodge structures.
Thus, $\V_{2}$ admits a $\R$-Hodge structure of weight $2$ and $d:\V_{2}\to \V_{1}\wedge \V_{1}$ is a morphism of Hodge structure.
Inductively, we can easily say that each $\V_{n}$ admits an $\R$-Hodge structure of weight $n$ and the restriction  $d:\V_{n}\to \sum_{i+j=n} \V_{i}\wedge \V_{j}$ is 
a morphism of  $\R$-Hodge structures.

\end{remark}


\subsection{The $1$-minimal model associated with $D^{\prime}D^{\prime\prime}$-Lemma}\label{dbd}
Consider the bidifferential bigraded algebra 
\[\left(A^{\ast}(M,{\mathcal O}_{\rho}\otimes \C)^{P,Q},D^{\prime}, D^{\prime\prime}\right).
\]
Then, by Theorem \ref{DDc}, 
we can say that 
on $A^{\ast}(M,{\mathcal O}_{\rho}\otimes \C)^{P,Q}$,
\[{\rm im}D^{\prime}\cap {\rm ker}D^{\prime\prime}={\rm ker}D^{\prime}\cap {\rm im}D^{\prime\prime}={\rm im}D^{\prime}D^{\prime\prime}.
\]
and  there exist a  $T$-equivariant linear map $F^{\prime}_{g}: {\rm im}D^{\prime}\cap {\rm ker}D^{\prime\prime}\to A^{\ast}(M,{\mathcal O}_{\rho}\otimes \C)^{P-1,Q-1}$  so that $\alpha=D^{\prime}D^{\prime\prime}F^{\prime}_{g}\alpha$ for $\alpha\in {\rm im}D^{\prime}\cap {\rm ker}D^{\prime\prime}$.
By using this, we construct the DGAs ${\mathcal N}^{\ast}(n)=\bigwedge (\bigoplus_{1\le P+Q\le n} \V^{P,Q})$ generated by  elements of degree $1$ and the homomorphisms $\varphi_{n}:{\mathcal N}^{\ast}(n)\to {\rm ker}D^{\prime}\subset A^{\ast}(M,{\mathcal O}_{\rho}\otimes \C)$ by the  inductive way.
More precisely, we will construct the components $\V^{P,Q}$ and the maps $d: \V^{P,Q}\to  \bigoplus_{S+U=P,\\ T+V=Q} \V^{S,T}\wedge \V^{U,V}$ by induction on $P+Q$ in the following.

\begin{itemize}
\item
For $P,Q\in \Z$ with $P+Q=1$, let
$\V^{P,Q}={\rm ker}D^{\prime}\cap {\rm ker}D^{\prime\prime}\cap A^{\ast}(M,{\mathcal O}_{\rho}\otimes \C)^{P,Q}$.
Define the homomorphism
$\varphi_{1}:\bigwedge (\bigoplus_{P+Q=1} \V^{P,Q})\to A^{\ast}(M,{\mathcal O}_{\rho}\otimes \C)$
so that on $\V^{P,Q}$, $\varphi_{1} $   is the natural inclusion $\V^{P,Q}\hookrightarrow A^{1}(M,{\mathcal O}_{\rho}\otimes\C)$.
\item For the quotient map $q:{\rm ker}D^{\prime}\cap {\rm ker}D^{\prime\prime}\to H^{\ast}({\rm ker}D^{\prime})$,  for $P,Q\in \Z$ with $P+Q=2$, define
\[\V^{P,Q}={\rm ker} \left(q \circ \varphi_{1} : \bigoplus_{S+U=P,\\ T+V=Q} \V^{S,T}\wedge \V^{U,V}\to H^{2}({\rm ker}D^{\prime})\right).
\]
Define the DGA ${\mathcal N}^{\ast}(2)=\bigwedge (\bigoplus_{1\le P+Q\le 2} \V^{P,Q})$ with the differential $d$ so that $d$ is $0$ on $\V^{P,Q}$ for $P+Q=1$ and $d$ on $\V^{P,Q}$ is the natural inclusion 
$\V^{P,Q}\hookrightarrow   \bigoplus_{\begin{array}{cc}S+U=P,\,\,T+V=Q\end{array}} \V^{S,T}\wedge \V^{U,V}$ for $P+Q=2$.
Define the homomorphism $\varphi_{2}:{\mathcal M}^{\ast}(2)\to {\rm ker}D^{\prime}$ which is an extension of $\varphi_{1}$ so that $\varphi_{2}(v)=D^{\prime}F^{\prime}_{g}(\varphi_{1}(dv))$ for $v\in \V^{P,Q}$ with $P+Q=2$.

\item For $n\ge 2$, consider the DGA ${\mathcal N}^{\ast}(n)=\bigwedge (\bigoplus_{1\le P+Q\le n} \V^{P,Q})$ with the homomorphism $\varphi_{n}:{\mathcal N}^{\ast}(n)\to A^{\ast}(M,{\mathcal O}_{\rho})$ we have constructed.
We can say that $\varphi_{n}(v)\in {\rm ker}D\cap {\rm ker}D^{c}$ for $v\in \V^{P,Q}$ with $P+Q=1$ and as an inductive hypothesis we assume $\varphi_{n}(v)\in {\rm im} D^{\prime}$ for $v\in \V^{P,Q}$ with $P+Q\ge 2$.

For $P+Q=n+1$, let
\[\V^{P,Q}={\rm ker} d_{ \vert \sum_{S+U=P,T+V=Q} \V^{S,T}\wedge \V^{U,V}}.\]
 Define the extended DGA ${\mathcal N}^{\ast}(n+1)={\mathcal N}^{\ast}(n)\otimes \bigwedge(\bigoplus_{P+Q=n+1} \V^{P,Q})$
so that the differential $d$ is defined on $\V^{P,Q}$ with $P+Q=n+1$ as the natural inclusion $\V^{P,Q}\hookrightarrow \sum_{S+U=P,T+V=Q} \V^{S,T}\wedge \V^{U,V}$.
The homomorphism $\varphi_{n+1}:{\mathcal N}^{\ast}(n+1)\to A^{\ast}(M,{\mathcal O}_{\rho})$ is defined by 
$\varphi_{n+1}(v)=D^{\prime}F^{\prime}_{g}(\varphi_{n}(dv))$ for $v\in \V^{P,Q}$ with $P+Q=n+1$.

\end{itemize}

Let $\varinjlim {\mathcal N}^{\ast}(n)={\mathcal N}^{\ast}$ and $\varphi=\varinjlim \varphi_{n}:{\mathcal N}^{\ast}\to {\rm ker}D^{\prime\prime}\subset A^{\ast}(M,{\mathcal O}_{\rho}\otimes \C)$.

By the construction, 
each $\bigoplus_{P+Q=k} \V^{P,Q}$ is a $T$-module so that the map $\varphi: {\mathcal N}^{\ast}\to  A^{\ast}(M,{\mathcal O}_{\rho}\otimes \C)$ is $T$-equivariant.
Our construction is in fact the construction of a $1$-minimal model of ${\rm ker}D^{\prime\prime}$.
Since the inclusion $ {\rm ker}D^{\prime\prime}\subset A^{\ast}(M,{\mathcal O}_{\rho}\otimes \C)$ induces a cohomology isomorphism,
we can say that 
the map $\varphi: {\mathcal N}^{\ast}\to  A^{\ast}(M,{\mathcal O}_{\rho}\otimes \C)$ induces isomorphisms on $0$-th and first cohomology and an injection on second cohomology.

Define the multiplicative bigrading  $({\mathcal N}^{\ast})^{P,Q}$ so that elements in $\V^{P,Q}$ are of type   $(P,Q)$.
By the construction,   we have $d({\mathcal N}^{\ast})^{P,Q}\subset ({\mathcal N}^{\ast})^{P,Q}$.
 Define the increasing filtration $W_{\ast}$ so that $W_{n}({\mathcal N}^{\ast})=\bigoplus_{P+Q\le n}({\mathcal N}^{\ast})^{P,Q}$.

\begin{remark}\label{sp22}
Let $\W_{n}=\bigoplus_{P+Q=n}\V^{P,Q}$ for each $n$.
Then we have $\V_{1}\otimes \C=\W_{1}$
By the isomorphism $H^{\ast}(M,{\mathcal O}_{\rho})\cong H^{\ast}({\rm ker}D^{\prime}) $ as in  Theorem \ref{Forma}, the map 
$q\circ \varphi_{1} : \bigwedge^{2} \W_{1}\to H^{2}({\rm ker}D^{c})$ is identified with the cup product 
\[\bigwedge^{2} H^{1}(M,{\mathcal O}_{\rho})\to H^{2}(M,{\mathcal O}_{\rho}).\]
Thus, forgetting the homomorphism $\varphi_{n}$, each $\W_{n}$ is constructed  in the same manner as $\V_{n}$.
By this we obtain the isomorphism ${\mathcal I}_{sp}:{\mathcal M}\otimes \C\to   {\mathcal N}^{\ast}$ so that  ${\mathcal I}_{sp}$ is the identity map on $\V_{1}\otimes \C=\W_{1}$ and 
 ${\mathcal I}_{sp}(\V_{n}\otimes \C)=\W_{n}$ for each $n$.
But, this isomorphism ${\mathcal I}_{sp}$ is not involved  with the homomorphisms $\phi$ and $\varphi$ but $T$-equivariant.

Via $\V_{1}\otimes \C=\W_{1}$, $\W_{1}=\bigoplus_{P+Q=1}\V^{P,Q}$ is in fact the bigrading of the $\R$-Hodge structure on  $\V_{1}\cong H^{1}(M,{\mathcal O}_{\rho})$.
We can easily check inductively that via  ${\mathcal I}_{sp}(\V_{n}\otimes \C)=\W_{n}$, $\W_{n}=\bigoplus_{P+Q=n}\V^{P,Q}$ is considered as the bigrading of the $\R$-Hodge structure on $\V_{n}$ as in Remark \ref{sp11}.

By the construction, we have $\V_{n}\subset \sum_{i+j=n}\V_{i}\wedge \V_{j}$.
By this, we can inductively say that $\V_{n}$ is a $T$-submodule of a direct sum of copies of the $n$-th tensor power $\V_{1}^{\otimes n}$ of $\V_{1}$ and  the $\R$-Hodge structure on $\V_{n}$ is an $\R$-Hodge substructure of a direct sum of $\V_{1}^{\otimes n}$.
As we saw in Section \ref{HOK}, the  $\R$-Hodge structure on  $\V_{1}=\bigoplus {\mathcal H}^{1}(M,E_{\alpha}^{\ast})\otimes V_{\alpha}$ is polarizable.
Thus the $\R$-Hodge structure on each  $\V_{n}$ is polarizable (see \cite[Corollary 2.12]{PS}). 
Since the co-module structure $\V_{1}\to \V_{1}\otimes \R[T]$ corresponding to the $T$-module structure is a morphism of $\R$-Hodge structures, 
inductively we can say that each co-module structure $\V_{n}\to \V_{n}\otimes \R[T] $ is also a morphism of $\R$-Hodge structures.
In particular,  the co-module structure ${\mathcal N}^{\ast}\to {\mathcal N}^{\ast}\otimes \C[T]$ preserves the bigradings on $\C[T]$ and ${\mathcal N}^{\ast}$.

\end{remark}

\subsection{Morgan's mixed Hodge structures and the split mixed Hodge structure}\label{Mormix}

By the Hodge decomposition as in Section \ref{HOK},
the $r$-th cohomology  $H^{r}(M,{\mathcal O}_{\rho})$ admits a Hodge structure of weight $r$ which induced by the Hodge filtration $F^{\ast}$ on $A^{\ast}(M,{\mathcal O}_{\rho}\otimes \C)$ so that 
\[F^{p}(A^{\ast}(M,{\mathcal O}_{\rho}\otimes \C))=\bigoplus_{P\ge p} A^{\ast}(M,{\mathcal O}_{\rho}\otimes \C)^{P,Q}.\]
Define  the increasing filtration $W_{\ast}$  on $A^{\ast}(M,{\mathcal O}_{\rho})$ so that $W_{-1}(A^{\ast}(M,{\mathcal O}_{\rho}))=0$ and $W_{0}(A^{\ast}(M,{\mathcal O}_{\rho}))=A^{\ast}(M,{\mathcal O}_{\rho})$.
Then we can say that  
${\rm id}: (A^{\ast}(M,{\mathcal O}_{\rho}), W)_{\C}\to   (A^{\ast}(M,{\mathcal O}_{\rho}\otimes \C), W_{\ast},  F^{\ast})$ is a  mixed Hodge diagram over $\R$ in the sense of Morgan (\cite{Mor}).
In \cite{Mor}, Morgan proves that there exists an $\R$-mixed Hodge structure (Morgan's mixed Hodge structure)  on the $1$-minimal model of a mixed Hodge diagram over $\R$ by the way in Remark \ref{MIXMO4444}.
Hence the $1$-minimal model ${\mathcal M}^{\ast}$ admits an  $\R$-mixed Hodge structure.
We should remark that  Morgan's mixed Hodge structure is not unique in general.
We  give the simplest  $\R$-mixed Hodge structure on ${\mathcal M}^{\ast}$.

\begin{example}
We put the $\R$-Hodge structure of weight $n$ on $\V_{n}$ for each $n$ as in Remark \ref{sp11} (see also Remark \ref{sp22}).
Then, we obtain a split $\R$-mixed Hodge structure $(W_{\ast},F^{\ast}_{sp})$ on ${\mathcal M}^{\ast}$ where $W_{\ast}$ is the increasing filtration defined in Section \ref{dedec}.

By Remark \ref{sp22}, we have $F^{r}_{sp}({\mathcal M}^{\ast}_{\C})={\mathcal I}^{-1}_{sp}(\bigoplus_{P\ge r} ({\mathcal N}^{\ast})^{P,Q})$.
\end{example}
But this $\R$-mixed Hodge structure is not interesting for studying the geometry of $M$ since  this structure is completely determined by the cup product on $H^{1}(M,{\mathcal O}_{\rho})$.

\subsection{The canonical isomorphism ${\mathcal I}:{\mathcal M}^{\ast}_{\C}\to {\mathcal N}^{\ast}$  with the homotopy $H:{\mathcal M}^{\ast}_{\C}\to A^{\ast}(M,{\mathcal O}_{\rho}\otimes \C)\otimes [t,dt]$}\label{isho}


The purpose of this subsection is to construct 
a $T$-equivariant isomorphism 
${\mathcal I}:{\mathcal M}^{\ast}_{\C}\to {\mathcal N}^{\ast}$ which is compatible with the increasing filtrations 
\[W_{k}({\mathcal M}^{\ast})=\bigoplus_{i\le k} {\mathcal M}^{\ast}_{i}\,\,\,\,\,\,\, {\rm and}\,\,\,\,\,\,\,  W_{k}({\mathcal N}^{\ast})=\bigoplus_{P+Q\le k} ({\mathcal N}^{\ast})^{P,Q}\]
  and 
 $T$-equivariant homotopy $H:{\mathcal M}^{\ast}_{\C}\to A^{\ast}(M,{\mathcal O}_{\rho}\otimes \C)\otimes [t,dt]$ from  $\varphi \circ {\mathcal I}$ to $\phi$  
 which are canonically determined by a K\"ahler metric $g$
 where $[t,dt]$ means the tensor product of polynomials on $t$ with the exterior algebra of $dt$ (see e.g.  \cite[Chapter 11]{GMo}).

\smallskip

For the construction, we will  trace the arguments in   \cite[(5.5)$\sim$(5.9), (7.3)$\sim$(7.5)]{Mor}.
We show by induction.
We will use the homotopy theory of DGAs as in \cite[Section 5]{Mor} and \cite[Chapter 11]{GMo}.
We assume that we have a $T$-equivariant isomorphism   ${\mathcal I}:{\mathcal M}^{\ast}_{\C}(n)\to  {\mathcal N}^{\ast}(n)$ so that there exists a homotopy $H :{\mathcal M}^{\ast}_{\C}(n)\to  A^{\ast}(M,{\mathcal O}_{\rho}\otimes \C)\otimes[t,dt]$ from  $\varphi_{n} \circ {\mathcal I}$ to $\phi_{n}$.
For $v\in \V_{n+1}$,  we have
\[\phi_{n}(dv)-\varphi_{n}({\mathcal I}(dv))=D\int^{1}_{0}H(dv)
\]
by applying  \cite[Proposition 5.5]{Mor} to $dv$.
By the construction of $\phi_{n+1}$,  we have $\phi_{n}(dv)=d\phi_{n+1}(v)$ and so
 $[\varphi_{n}({\mathcal I}(dv))]=0$ in $H^{2}(A^{\ast}(M,{\mathcal O}_{\rho}\otimes \C))$.
By the construction in Section \ref{dbd}, we have ${\mathcal I}(dv)\in d\left(\bigoplus_{P+Q\le n+1}{\mathcal V}^{P,Q}\right)$.
By ${\rm ker}\, d_{\vert \bigoplus_{P+Q\le n+1} {\mathcal V}^{P,Q}}=\bigoplus_{P+Q=1}{\mathcal V}^{P,Q}$,  we can take a  unique $a(v)\in  \bigoplus_{2\le P+Q\le n+1}{\mathcal V}^{P,Q}$ such that $da(v)={\mathcal I}(dv)$.
Consider 
\[\phi_{n+1}(v)-\varphi_{n+1}(a(v))-\int^{1}_{0}H(dv).
\]
Then it is closed  and so we have a unique $a^{\prime}(v)\in \bigoplus_{P+Q=1} \V^{P,Q}$ such that 
\[[\varphi_{n+1}(a^{\prime}(v))]=\left[\phi_{n+1}(v)-\varphi_{n+1}(a(v))-\int^{1}_{0}H(dv)\right] 
\]
in $H^{1}(A^{\ast}(M,{\mathcal O}_{\rho}\otimes \C))$.
We define the  extended DGA homomorphism  ${\mathcal I}:{\mathcal M}^{\ast}_{\C}(n+1)={\mathcal M}^{\ast}_{\C}(n)\otimes \bigwedge \V_{n+1} \to  {\mathcal N}^{\ast}(n+1)$ so that 
${\mathcal I}(v)=a(v)+a^{\prime}(v)$ for $v\in \V_{n+1}$.
We take 
\[b(v)=D^{\ast}G_{D}\left(\phi_{n+1}(v)-\varphi_{n+1}\left({\mathcal I}(v)\right)-\int^{1}_{0}H(dv)\right).
\]
where $D^{\ast}$ is the adjoint of the differential operator $D$ and $G_{D}$ is the Green operator.
Then we have $Db(v)=\phi_{n+1}(v)-\varphi_{n+1}\left({\mathcal I}(v)\right)-\int^{1}_{0}H(dv)$.
We define the  extended DGA homomorphism $H:{\mathcal M}^{\ast}_{\C}(n+1)={\mathcal M}^{\ast}_{\C}(n)\otimes \bigwedge \V_{n+1} \to A^{\ast}(M,{\mathcal O}_{\rho}\otimes \C)\otimes [t,dt]$
so that for $v\in \V_{n+1}$
\[H(v)=\varphi_{n+1}(\mathcal I(v))+\int^{t}_{0}H(dv)+D(b(v)t).
\]
Thus, we obtain   ${\mathcal I}:{\mathcal M}^{\ast}_{\C}(n+1)\to  {\mathcal N}^{\ast}(n+1)$ so that there exists a homotopy $H :{\mathcal M}^{\ast}_{\C}(n+1)\to  A^{\ast}(M,{\mathcal O}_{\rho}\otimes \C)\otimes[t,dt]$ from  $\varphi_{n+1} \circ {\mathcal I}$ to $\phi_{n+1}$.
We can easily  check that  extensions  $\mathcal I$ and $H$ are $T$-equivariant inductively.
In fact, since the involved maps $d$, $\phi$, $\varphi$, $D^{\ast}G_{D}$, etc. are $T$-equivariant, 
by the induction hypothesis, the above $a(v)$, $a^{\prime}(v)$ and $b(v)$ commute with the $T$-action and hence we san say the $T$-equivariance of the extensions  $\mathcal I$ and $H$.

For each step, $b(v)$  depends on the choice of a K\"ahler metric.
Thus, our construction of $I$ and $H$ depends on the choice of a K\"ahler metric.
\begin{remark}\label{volvol}
Let $d\mu_{g}$ be the volume form associated with the K\"ahler metric $g$.
Define 
\[C_{g}=\left\{\Psi\in A^{0}(M)\otimes \C:  \int \Psi d\mu_{g}=0  \right\}. 
\]
Then $D^{\ast}G_{D}: D( A^{0}(M,{\mathcal O}_{\rho}\otimes \C))\to A^{0}(M,{\mathcal O}_{\rho}\otimes \C)$ is in fact the inverse map of
\[D: C_{g}\oplus \bigoplus_{\alpha\not=1}A^{0}(M,{\bf E}^{\ast}_{\alpha}\otimes \C)\otimes V_{\alpha}\otimes \C\to D( A^{0}(M,{\mathcal O}_{\rho}\otimes \C)).
\]
We notice that $C_{g}$ is not closed under the multiplication.
Thus our construction may be  different from the construction by the reduced bar construction of an augmented multiplicative mixed Hodge complex as in \cite{HainI}.


\end{remark}

\begin{remark}\label{nonsp}
It is not obvious that we can take ${\mathcal I}(\V_{n}\otimes \C)\subset \bigoplus_{P+Q=n} \V^{P,Q}$ for $k\ge 3$.
For $n=1$, by the constructions, we can take ${\mathcal I}:\V_{1}\otimes \C\cong \bigoplus_{P+Q=1} \V^{P,Q}$  so that  $\phi_{1}=\varphi_{1} \circ {\mathcal I}$.
For $n=2$, we can take ${\mathcal I}:\V_{2}\otimes \C\cong \bigoplus_{P+Q=2} \V^{P,Q}$ so that 
the homotopy $H :{\mathcal M}^{\ast}_{\C}(n)\to  A^{\ast}(M,{\mathcal O}_{\rho}\otimes \C)\otimes [t,dt]$ is given by
\[H(v)=-2\sqrt{-1}D^{\prime}\alpha+\sqrt{-1}D\alpha t+\sqrt{-1}\alpha dt
\]
 for $v\in \V_{2}$ satisfying $\phi(v)=D^{c}\alpha$.
\end{remark}

\subsection{The main mixed Hodge structure}\label{MIXMIX}
We obtain the canonical $\R$-mixed Hodge structure on ${\mathcal M}^{\ast}$.
\begin{theorem}\label{Morg}
For the  isomorphism ${\mathcal I}:{\mathcal M}^{\ast}_{\C}\to {\mathcal N}^{\ast}$ as in the last subsection, taking the filtration $F^{r}({\mathcal M}^{\ast}_{\C})={\mathcal I}^{-1}(\bigoplus_{P\ge r} ({\mathcal N}^{\ast})^{P,Q})$, $({\mathcal M}^{\ast}, W_{\ast},F^{\ast})$ is an $\R$-mixed Hodge structure which is compatible with the differential and the multiplication.
The co-module structure ${\mathcal M}^{\ast}\to \R[T]\otimes {\mathcal M}^{\ast}$ corresponding to the $T$-module structure on ${\mathcal M}^{\ast}$ is a morphism of $\R$-mixed Hodge structures.

\end{theorem}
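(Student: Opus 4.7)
The plan is to place our explicit data into Morgan's framework recalled in Remark \ref{MIXMO4444}: we need $(\mathcal{M}^{\ast},W_{\ast})$ as a filtered real $1$-minimal model, $\mathcal{N}^{\ast}$ as a bigraded complex $1$-minimal model, and $\mathcal{I}$ as a filtration-preserving isomorphism. The first two are immediate from the constructions in Subsections \ref{dedec} and \ref{dbd}. The substantive point is that $\mathcal{I}$ preserves the weight filtrations. I would verify this by induction on $n$: inspecting the construction in Subsection \ref{isho}, for $v\in \V_{n+1}$ the image $\mathcal{I}(v)=a(v)+a^{\prime}(v)$ with $a(v)\in\bigoplus_{2\le P+Q\le n+1}\V^{P,Q}$ and $a^{\prime}(v)\in\bigoplus_{P+Q=1}\V^{P,Q}$, so $\mathcal{I}(v)\in W_{n+1}(\mathcal{N}^{\ast})$; by the inductive hypothesis and multiplicativity, $\mathcal{I}(W_{k}(\mathcal{M}^{\ast}_{\C}))\subset W_{k}(\mathcal{N}^{\ast})$ for all $k$, and a dimension/finiteness argument turns this into equality.

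Next I would show that $(W_{\ast},F^{\ast})$ is an $\R$-MHS by using Proposition \ref{spmix}. Set $(\mathcal{M}^{\ast}_{\C})^{P,Q}=\mathcal{I}^{-1}(\mathcal{N}^{\ast\,P,Q})$; the previous step gives $\bigoplus_{P+Q\le n}(\mathcal{M}^{\ast}_{\C})^{P,Q}=W_{n}(\mathcal{M}^{\ast})\otimes\C$, which is defined over $\R$. The key remaining condition is the relation $\overline{(\mathcal{M}^{\ast}_{\C})^{P,Q}}\equiv(\mathcal{M}^{\ast}_{\C})^{Q,P}$ modulo lower weight. This reduces to the assertion on $\operatorname{Gr}^{W}$ that the induced map $\overline{\mathcal{I}}:\operatorname{Gr}_{k}^{W}(\mathcal{M}^{\ast}_{\C})\to \mathcal{W}_{k}=\bigoplus_{P+Q=k}\V^{P,Q}$ coincides with the split isomorphism $\mathcal{I}_{sp}$ of Remark \ref{sp22}. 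Both $\overline{\mathcal{I}}$ and $\mathcal{I}_{sp}$ are DGA isomorphisms on the associated graded, both agree on $\V_{1}\otimes\C=\W_{1}$, and both are then forced on each $\V_{k}\hookrightarrow\sum_{i+j=k}\V_{i}\wedge\V_{j}$ by multiplicativity and the injection $d:\V_{k}\hookrightarrow\sum\V_{i}\wedge\V_{j}$. Therefore $F^{\ast}$ restricted to $\operatorname{Gr}_{k}^{W}$ is the Hodge filtration of the $\R$-Hodge structure of weight $k$ on $\mathcal{V}_{k}$ (extended multiplicatively to $\mathcal{M}^{\ast}_{k}$) from Remark \ref{sp22}, so Proposition \ref{spmix} applies.

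Compatibility with the differential and the multiplication is then straightforward: $d\V_{k}\subset\mathcal{M}^{\ast}_{k}$ shows $d$ preserves the grading of $\mathcal{M}^{\ast}$ hence $W_{\ast}$; the bigraded identity $d\V^{P,Q}\subset\bigoplus_{S+U=P,\,T+V=Q}\V^{S,T}\wedge\V^{U,V}$ on $\mathcal{N}^{\ast}$ transports across $\mathcal{I}$ to show $d$ preserves $F^{\ast}$. Multiplicativity of $W_{\ast}$ follows from $\mathcal{M}^{\ast}_{k}\wedge\mathcal{M}^{\ast}_{l}\subset\mathcal{M}^{\ast}_{k+l}$, and that of $F^{\ast}$ from the analogous product rule for the bigrading on $\mathcal{N}^{\ast}$.

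Finally, for the coaction $\mathcal{M}^{\ast}\to\R[T]\otimes\mathcal{M}^{\ast}$: preservation of $W_{\ast}$ reduces to the fact that each $\mathcal{M}^{\ast}_{k}$ is a $T$-submodule (by $T$-equivariance of the inductive construction) together with the purity of $\R[T]$ at weight $0$. Preservation of $F^{\ast}$ uses the fact recorded at the end of Subsection \ref{DGKAA} and Remark \ref{sp22} that the coaction on $\mathcal{N}^{\ast}$ preserves the bigrading, combined with the $T$-equivariance of $\mathcal{I}$. The main obstacle in this program is the identification $\overline{\mathcal{I}}=\mathcal{I}_{sp}$ on the associated graded; this is where the whole construction interlocks with the formality-style description of Remark \ref{sp22}, and where rigor requires unraveling the inductive definition of $\mathcal{I}$ carefully enough to see that the ambiguity in choosing $a(v)$ and $a^{\prime}(v)$ disappears modulo $W_{k-1}$.
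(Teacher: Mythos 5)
Your proposal is correct and follows essentially the same route as the paper: the heart of both arguments is the identification of the map induced by $\mathcal I$ on $Gr^{W}$ with the split isomorphism $\mathcal I_{sp}$ of Remark \ref{sp22}, proved by induction using the injectivity of $d$ on $\W_{n}=\bigoplus_{P+Q=n}\V^{P,Q}$ for $n\ge 2$, after which the filtration $F^{\ast}$ induces the weight-$k$ $\R$-Hodge structure of Remark \ref{sp11} on each $Gr^{W}_{k}$. The remaining points (compatibility with $d$ and the product, and the $T$-coaction statement via $T$-equivariance of $\mathcal I$ and bigrading-preservation of the coaction on ${\mathcal N}^{\ast}$) are handled as in the paper.
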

\begin{proof}
Since each $W_{n}({\mathcal M}^{\ast}) $ is a $T$-submodule, the $T$ action preserves the filtration $W_{\ast}$.
Since the co-module structure ${\mathcal N}^{\ast}\to {\mathcal N}^{\ast}\otimes \C[T]$ preserves the bigradings on $\C[T]$ and ${\mathcal N}^{\ast}$, by the $T$-equivariance of ${\mathcal I}:{\mathcal M}^{\ast}_{\C}\to {\mathcal N}^{\ast}$, the co-module structure ${\mathcal M}^{\ast}_{\C}\to {\mathcal M}^{\ast}_{\C}\otimes  \C[T]$ preserves the filtrations $F^{\ast}$ on 
${\mathcal M}^{\ast}_{\C}$  and  $\C[T]$.
Thus the second assertion follows when $({\mathcal M}^{\ast}, W_{\ast},F^{\ast})$ is an $\R$-mixed Hodge structure.

It is sufficient to show that for any $n$, $(\V_{1}\oplus \dots \oplus \V_{n}, W_{\ast},F^{\ast})$  is an $\R$-mixed Hodge structure.

Consider the functor $Gr^{W}$ from the category of filtered DGAs  to the category of DGAs.
Then by the definitions
 \[W_{k}({\mathcal M}^{\ast})=\bigoplus_{i\le k} {\mathcal M}^{\ast}_{i}\,\,\,\,\,\,\, {\rm and}\,\,\,\,\,\,\,  W_{k}({\mathcal N}^{\ast})=\bigoplus_{P+Q\le k} ({\mathcal N}^{\ast})^{P,Q},\]
we have $Gr^{W}({\mathcal M}^{\ast})={\mathcal M}^{\ast}$ and $Gr^{W}({\mathcal N}^{\ast})={\mathcal N}^{\ast}$.
We consider the map ${\mathcal I}^{\prime}:{\mathcal M}^{\ast}_{\C}\to {\mathcal N}^{\ast}$ which  corresponds to $Gr^{W}({\mathcal I}):Gr^{W}({\mathcal M}^{\ast}_{ \C}) \to  Gr^{W}({\mathcal N}^{\ast})$.
We will show ${\mathcal I}^{\prime}={\mathcal I}_{sp}$.

We prove this equality inductively on each ${\mathcal I}^{\prime}: {\mathcal M}_{\C}^{\ast}(n)\to {\mathcal N}^{\ast}(n)$.
On $n=1$, it is obvious.
We suppose this on $n$.
Then, by the construction of ${\mathcal I}$, for $v\in \V_{n+1}$,
we have $d{\mathcal I}^{\prime}(v)={\mathcal I}_{sp}(dv)$.
For $n\ge 1$, by the construction of ${\mathcal N}^{\ast}$,
on $\W_{n}=\bigoplus_{P+Q=n}\V^{P,Q}$ the differential operator $d$ is injective.
Thus, we have ${\mathcal I}^{\prime}(v)={\mathcal I}_{sp}(v)$.

This implies that for each $k$, on  $Gr_{k}^{W}(\V_{1}\oplus\dots \oplus \V_{n})=\V_{k}$ the filtration $F^{\ast}$ induces the Hodge structure of weight $k$ as in Remark \ref{sp11}. 
Hence the theorem follows.
\end{proof}

We say that  an $\R$-mixed Hodge structure is {\em graded-polarizable} if the $\R$-Hodge structure on each $k$-th graded quotient $Gr^{W}_{k}$ is polarizable.
We can easily show the following statement.
\begin{proposition}
The $\R$-mixed Hodge structure on ${\mathcal M}^{\ast}$ as in Theorem \ref{Morg} is  graded-polarizable.

\end{proposition}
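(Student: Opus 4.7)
The plan is to reduce the polarizability of the graded pieces $Gr^W_k({\mathcal M}^{\ast})$ to the polarizability of the Hodge structures on the generating spaces $\V_1,\dots,\V_k$, and then to invoke the fact that tensor products and sub-objects of polarizable real Hodge structures are polarizable.

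First, I would identify the graded Hodge structure explicitly. By construction the filtration $W_{\ast}$ comes from the multiplicative grading ${\mathcal M}^{\ast}=\bigoplus_k {\mathcal M}^{\ast}_k$, so $Gr^W_k({\mathcal M}^{\ast})={\mathcal M}^{\ast}_k$. In the proof of Theorem \ref{Morg} it was shown that the map $\mathcal I'$ induced by $Gr^W(\mathcal I)$ coincides with the split isomorphism $\mathcal I_{sp}$ of Remark \ref{sp22}. Consequently the Hodge filtration induced by $F^{\ast}$ on $Gr^W_k({\mathcal M}^{\ast})$ agrees with the one obtained from the split $\R$-Hodge structures of weight $n$ on each $\V_n$ as in Remark \ref{sp11}, multiplied in the exterior algebra. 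Thus each $Gr^W_k({\mathcal M}^{\ast})$ decomposes as a direct sum of sub-Hodge structures of tensor products $\V_{i_1}\otimes\cdots\otimes\V_{i_m}$ with $i_1+\cdots+i_m=k$.

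Next, I would recall from Remark \ref{sp22} that each $\V_n$ is an $\R$-Hodge substructure of a direct sum of copies of $\V_1^{\otimes n}$, where $\V_1\cong H^1(M,{\mathcal O}_{\rho})=\bigoplus_{\alpha}{\mathcal H}^1(M,{\bf E}_{\alpha}^{\ast})\otimes V_{\alpha}$ carries a polarizable $\R$-Hodge structure: on each summand ${\mathcal H}^1(M,{\bf E}_{\alpha}^{\ast})$ the polarization described in Section \ref{HOK} is available, while $V_{\alpha}$ carries the polarizable Hodge structure from Subsection \ref{AUre}. Tensor products of polarizable $\R$-Hodge structures are polarizable, and sub-Hodge structures of polarizable ones are polarizable (this is the standard fact, cf. \cite[Corollary 2.12]{PS} already cited in Remark \ref{sp22}). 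Hence each $\V_n$ is polarizable.

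Finally, assembling these observations, each summand $\V_{i_1}\otimes\cdots\otimes\V_{i_m}$ appearing in ${\mathcal M}^{\ast}_k$ is polarizable, and ${\mathcal M}^{\ast}_k$ is a sub-Hodge structure of a finite direct sum of such tensor products (cut out by the antisymmetrization defining the wedge product). Therefore $Gr^W_k({\mathcal M}^{\ast})$ is polarizable for every $k$, which is exactly the graded-polarizability of $({\mathcal M}^{\ast},W_{\ast},F^{\ast})$. The only delicate point in this plan is the identification of the $F^{\ast}$-induced filtration on $Gr^W_k$ with the split one coming from $\mathcal I_{sp}$; this, however, is already established in the proof of Theorem \ref{Morg} via the inductive verification that $Gr^W(\mathcal I)=\mathcal I_{sp}$, so no new work is required.
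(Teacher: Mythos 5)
Your proposal is correct and follows essentially the same route as the paper: both reduce to the observation, established in the proof of Theorem \ref{Morg} via $Gr^{W}({\mathcal I})={\mathcal I}_{sp}$, that the induced Hodge structure on $Gr^{W}_{k}$ is the split one of Remark \ref{sp11}, and then invoke the polarizability of each $\V_{k}$ as a sub-Hodge structure of sums of $\V_{1}^{\otimes k}$ (Remark \ref{sp22}, citing \cite[Corollary 2.12]{PS}). The only difference is that the paper states the reduction to the generating space $\V_{1}\oplus\dots\oplus\V_{n}$ without spelling out the tensor-product bookkeeping for the full exterior algebra, which you make explicit.
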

\begin{proof}
It is sufficient to show that for any $n$, $(\V_{1}\oplus \dots \oplus \V_{n}, W_{\ast},F^{\ast})$  is graded-polarizable.
By the above proof, for each $k$, on  $Gr_{k}^{W}(\V_{1}\oplus\dots \oplus \V_{n})=\V_{k}$ the filtration $F^{\ast}$ induces the $\R$-Hodge structure of weight $k$ as in Remark \ref{sp11}. 
Such $\R$-Hodge structure is polarizable. 
Hence the proposition follows.
\end{proof}

\begin{remark}\label{remnoss}
By the proof of Theorem \ref{MIXMIX},  for each $n$, we can say that 
 the   linear map ${\mathcal I}-{\mathcal I}_{sp}: W_{n}({\mathcal M}^{\ast}\otimes \C) \to W_{n-1}({\mathcal N}^{\ast}\otimes \C) $ is an obstruction to splitting of the $\R$-mixed Hodge structure $(W_{\ast},F^{\ast})$ on ${\mathcal M}^{\ast}$.

It is not obvious that the direct sum $\V_{1}\oplus \dots \oplus \V_{n}$ is a splitting of $\R$-mixed Hodge structures for $n\ge 3$ since the map $\mathcal I$ does not satisfy  ${\mathcal I}(\V_{n})\subset \bigoplus_{P+Q=n}\V_{P,Q}$.

\end{remark}


\subsection{Pro-nilpotent Lie algebra ${\frak u}$}\label{proni}

We consider the  pro-nilpotent Lie algebra ${\frak u}$ which is dual to the $1$-minimal model ${\mathcal M}^{\ast}$.
Denote by $\V^{\ast}_{i}$ the dual space of $\V_{i}$.
Then, we have ${\frak u}=\bigoplus \V_{i}^{\ast}$ and  the Lie bracket is  dual to the differential $d: \V_{k}\to \sum_{i+j=k}\V_{i}\wedge \V_{j}$.
Hence, ${\frak u}=\bigoplus \V^{\ast}_{i}$ is a graded pro-nilpotent Lie algebra so that ${\frak u}_{k}=\bigoplus_{i\ge k} \V_{i}^{\ast}$ is the $k$-th term of the lower central series of ${\frak u}$.
By ${\mathcal M}^{1}=\bigoplus \V_{i}$ and the definition of the differential $d$ on ${\mathcal M}^{1}$ as in Section \ref{dedec}, the Lie bracket on $  {\frak u}$ is the dual to the differential $d: {\mathcal M}^{1}\to {\mathcal M}^{2}={\mathcal M}^{1}\wedge {\mathcal M}^{1}$.
By the $\R$-mixed Hodge structure on ${\mathcal M}^{1}$ as in Theorem \ref{Morg},
we obtain the $\R$-mixed Hodge structure $(W_{\ast}, F^{\ast})$ on ${\frak u}$. 
 Since the differential $d: {\mathcal M}^{1}\to {\mathcal M}^{1}\wedge {\mathcal M}^{1}$ is a morphism of $\R$-mixed Hodge structures by Theorem \ref{Morg}, the $\R$-mixed Hodge structure  on ${\frak u}$ is compatible with its Lie bracket.
We have $W_{-k}({\frak u})=\bigoplus_{i\ge k} \V_{i}^{\ast}={\frak u}_{k}$ and so this filtration is the natural filtration which is given by the lower central series of ${\frak u}$.
In particular, we have ${\frak u}/[\frak u,\frak u]\cong \V^{\ast}_{1}$.
We recall that $\V_{1}$ is identified with the space of harmonic forms in $A^{\ast}(M, {\mathcal O}_{\rho})$ and hence it is isomorphic to  $H^{1}(M,  {\mathcal O}_{\rho})$.
As in Remark \ref{remnoss},  the decomposition 
${\frak u}=\bigoplus \V^{\ast}_{i}$ does not give a splitting of the $\R$-mixed Hodge structure  and  the $\R$-mixed Hodge structure on ${\frak u}$ is not $\R$-split.


\subsection{Functoriality}\label{Funcmx}

Let $(M_{1},g_{1})$ and $(M_{2},g_{2})$ be compact K\"ahler manifolds.
Let $f:M_{2}\to M_{1}$ be a holomorphic map.
For an $\R$-VHS $({\bf E}, {\bf F}^{\ast})$  over $M_{1}$ with a polarization $\bf S$, we have the pull-back $\R$-VHS $(f^{\ast}{\bf E}, f^{\ast}{\bf F}^{\ast})$ over $M_{2}$ with the polarization $f^{\ast}\bf S$.
We have the homomorphism $f^{\ast}: A^{\ast}(M_{1},{\bf E})\to A^{\ast}(M_{2}, f^{\ast}{\bf E})$ which is compatible with bigradings and  commutes 
with all differentials  as in  Section \ref{HOK}.
Moreover, by Remark \ref{ddcinvvv}, we have
\[f^{\ast}\circ  D^{c}F_{g_{1}}=D^{c}F_{g_{2}}\circ f^{\ast}  \qquad  {\rm on} \qquad  {\rm im}D\cap {\rm ker}D^{c}\cap A^{2}(M_{1},{\bf E})\] and 
 \[f^{\ast}\circ  D^{\prime}F^{\prime}_{g_{1}}=D^{\prime}F^{\prime}_{g_{2}}\circ f^{\ast} \qquad   {\rm  on} \qquad  {\rm im}D^{\prime}\cap {\rm ker}D^{\prime\prime}\cap A^{2}(M_{1},{\bf E}_{\C})^{P,Q}.\]

Let $\rho:\pi_{1}(M_{1},f(x))\to GL(V_{0})$ be a real valued representation for $x\in M_{2}$.
Consider the real local system ${\bf E}_{0}=(\tilde{M}_{1} \times V_{0})/\pi_{1}(M_{1},f(x))$ where $\tilde{M}_{1}$ is the universal covering of $M_{1}$.
We assume that $\bf E_{0}$ admits an $\R$-VHS $(\bf E_{0}, \bf F^{\ast})$ over $M_{1}$ of weight $N_{0}$ with a polarization $\bf S$.
Consider the bilinear form ${\bf S}_{f(x)}:V_{0}\times V_{0}\to \R$.
Then we have $\rho(\pi_{1}(M,x))\subset T={\rm Aut}(V_{0},{\bf S}_{f(x)})$.
For the induced map $f_{\ast}:\pi_{1}(M_{2},x)\to \pi_{1}(M_{1},f(x))$ we assume that  $\rho(f_{\ast}(\pi_{1}(M_{2},x)))$ is Zariski-dense in $T$.
Let $\rho^{\prime}=\rho\circ f_{\ast}$
In this assumption, we can apply the constructions in this section to $ A^{\ast}(M_{1},{\mathcal O}_{\rho})$ and $ A^{\ast}(M_{2},{\mathcal O}_{\rho^{\prime}})$.
Moreover, we have the $T$-equivariant homomorphism $f^{\ast}: A^{\ast}(M_{1},{\mathcal O}_{\rho})\to A^{\ast}(M_{2},{\mathcal O}_{\rho^{\prime}})$
which is compatible with bigradings and  commutes 
with all differentials.

Take the canonical $1$-minimal models $\,_{1}\phi:\,_{1}{\mathcal M}^{\ast}\to A^{\ast}(M_{1}, {\mathcal O}_{\rho})$ and $\,_{2}\phi:\,_{2}{\mathcal M}^{\ast}\to A^{\ast}(M_{2},{\mathcal O}_{\rho^{\prime}})$ as in Subsection \ref{dedec}
and $\,_{1}\varphi:\,_{1}{\mathcal N}^{\ast}\to A^{\ast}(M_{1},{\mathcal O}_{\rho}\otimes \C)$ and $\,_{2}\varphi:\,_{2}{\mathcal N}^{\ast}\to A^{\ast}(M_{2},{\mathcal O}_{\rho^{\prime}}\otimes \C)$ as in Subsection \ref{dbd} (We use the notation $\,_{1}{\mathcal M}^{\ast}=\bigwedge (\,_{1}\V_{1}\oplus\,_{1}\V_{2}\dots)$ etc.).
By the property of $f^{\ast}$, the map $f^{\ast}$ can be restricted as $f^{\ast}: {\rm ker} D\cap {\rm ker} D^{c}\cap A^{\ast}(M_{1}, {\mathcal O}_{\rho})\to {\rm ker} D\cap {\rm ker} D^{c}\cap A^{\ast}(M_{2}, {\mathcal O}_{\rho^{\prime}})$.
Thus we get the map $\,_{1}\V_{1}\to \,_{2}\V_{1}$.
Since the map $d:\,_{l}\V_{n}\to \sum_{i+j=n} \,_{l}\V_{i}\wedge \,_{l}\V_{j}$ is injective for $l=1, 2$, we can extend this map to  the $T$-equivariant DGA map 
$f_{\mathcal M}:\,_{1}{\mathcal M}^{\ast}(n)\to \,_{2}{\mathcal M}^{\ast}(n)$ for each $n$.
By the commutativity between $f^{\ast}$ and the map $D^{c}F_{g}$ as above, we can easily check that we have $f^{\ast}\circ \,_{1}\phi=\,_{2}\phi \circ f_{\mathcal M}$.
By the similar way, we have the canonical  $T$-equivariant DGA map 
$f_{\mathcal N}:\,_{1}{\mathcal N}^{\ast}(n)\to \,_{2}{\mathcal N}^{\ast}(n)$ for each $n$  satisfying $f^{\ast}\circ \,_{1}\varphi=\,_{2}\varphi \circ f_{\mathcal N}$.
Moreover, since $f^{\ast}$   is compatible with the bigradings, the map $f_{\mathcal N}$ is compatible with the bigradings $(\,_{1}{\mathcal N})^{P,Q}$ and $(\,_{2}{\mathcal N})^{P,Q}$.

Take the isomorphisms $\,_{1}{\mathcal I}:\,_{1}{\mathcal M}^{\ast}_{\C}\to  \,_{1}{\mathcal N}^{\ast}$ and $\,_{2}{\mathcal I}:\,_{2}{\mathcal M}^{\ast}_{\C}\to  \,_{2}{\mathcal N}^{\ast}$   and 
the homotopies  $\,_{1}H:\,_{1}{\mathcal M}^{\ast}_{\C}\to  A^{\ast}(M_{1}, {\mathcal O}_{\rho}\otimes \C)\otimes [t,dt]$ and $\,_{2}H:\,_{2}{\mathcal M}^{\ast}_{\C}\to  A^{\ast}(M_{2}, {\mathcal O}_{\rho^{\prime}}\otimes \C)\otimes [t,dt]$  as in Subsection \ref{isho}.

Let $d\mu _{1}$ and  $d\mu _{2}$ be the volume forms associated with the K\"ahler metrics $g_{1}$ and $g_{2}$ respectively.
We consider the following condition on $f:M_{2}\to M_{1}$.
\begin{cond(V)}
 For any $\Psi\in A^{0}(M_{1})\otimes \C$ satisfying $\int_{M_{1}} \Psi d\mu_{1}=0$, we have
\[\int_{M_{2}} (f^{\ast}\Psi )d\mu_{2}=0.
\] 
\end{cond(V)}
Let 
\[C_{i}=\left\{\Psi\in A^{0}(M)\otimes \C:  \int_{M_{i}} \Psi d\mu_{i}=0  \right\}
\]
for $i=1, 2$.
Then the Condition (V) holds if and only if $f^{\ast}(C_{1})\subset C_{2}$.
By this, the Condition (V) implies the commutativity
\[f^{\ast}\circ D^{\ast}G_{D}=D^{\ast}G_{D}\circ f^{\ast}
\]
on $D(A^{0}(M_{1}, {\mathcal O}_{\rho}\otimes \C))$.
\begin{proposition}\label{Hcommu}
If $f$ satisfies the Condition (V), then
\[\,_{2}{\mathcal I}\circ f_{\mathcal M}=f_{\mathcal N}\circ \,_{1}{\mathcal I}\]
 and \[\,_{2}H\circ f_{\mathcal M}=(f^{\ast}\otimes {\rm id}_{[t,dt]})\circ \,_{1}H.\]
\end{proposition}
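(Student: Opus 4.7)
The plan is to prove both equalities simultaneously by induction on $n$, following the inductive construction of ${\mathcal I}$ and $H$ on each stage ${\mathcal M}^{\ast}(n)$ described in Subsection~\ref{isho}. Three commutation facts will be used repeatedly: (a) $f^{\ast}$ commutes with $d$, $\phi$ and $\varphi$ on the nose, by the very construction of $f_{\mathcal M}$ and $f_{\mathcal N}$; (b) $f^{\ast}$ commutes with the operators $D^{c}F_{g}$ and $D^{\prime}F^{\prime}_{g}$, as recorded in Remark~\ref{ddcinvvv}; and (c), under Condition~(V), $f^{\ast}$ commutes with $D^{\ast}G_{D}$ on $D(A^{0}(M_{1},{\mathcal O}_{\rho}\otimes\C))$, which is the commutation noted immediately before the statement.

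\emph{Base case} ($n=1$): on $\V_{1}$, both $\,_{i}{\mathcal I}$ and $\,_{i}H$ are defined directly from $\,_{i}\phi_{1}$ and $\,_{i}\varphi_{1}$ (as in Remark~\ref{nonsp}), so both identities reduce to fact~(a). For the inductive step, assume both identities hold on ${\mathcal M}^{\ast}(n)$ and fix a generator $v\in \V_{n+1}$. I would first handle the piece $a(v)\in \bigoplus_{2\le P+Q\le n+1}\V^{P,Q}$: by (a) together with the inductive hypothesis for ${\mathcal I}$, the element $f_{\mathcal N}(a(v))$ is a $d$-preimage of $\,_{2}{\mathcal I}(df_{\mathcal M}(v))$; since $d$ is injective on $\bigoplus_{2\le P+Q\le n+1}\V^{P,Q}$, this forces $f_{\mathcal N}(a(v))=a(f_{\mathcal M}(v))$. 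Next, for $a^{\prime}(v)\in \bigoplus_{P+Q=1}\V^{P,Q}$, I would apply $f^{\ast}$ to its defining cohomological equation and use (a), the inductive hypothesis for $H$ applied to $dv\in {\mathcal M}^{\ast}(n)$, and the previous sub-step; the uniqueness of $a^{\prime}$ (coming from the fact that $\,_{2}\varphi$ restricts to an isomorphism of $\bigoplus_{P+Q=1}\V^{P,Q}$ onto $H^{1}(A^{\ast}(M_{2},{\mathcal O}_{\rho^{\prime}}\otimes\C))$) then yields $f_{\mathcal N}(a^{\prime}(v))=a^{\prime}(f_{\mathcal M}(v))$. Together these give $\,_{2}{\mathcal I}\circ f_{\mathcal M}=f_{\mathcal N}\circ \,_{1}{\mathcal I}$ on $\V_{n+1}$.

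\emph{The crux} is the step computing $b(v)=D^{\ast}G_{D}\bigl(\,_{1}\phi_{n+1}(v)-\,_{1}\varphi_{n+1}(\,_{1}{\mathcal I}(v))-\int_{0}^{1}\,_{1}H(dv)\bigr)$. The previous sub-steps together with (a) show that pulling back the argument of $D^{\ast}G_{D}$ along $f^{\ast}$ reproduces the corresponding expression for $f_{\mathcal M}(v)$; to move $f^{\ast}$ through $D^{\ast}G_{D}$ itself is precisely fact~(c), and this is where Condition~(V) is indispensable. As explained in Remark~\ref{volvol}, the map $D^{\ast}G_{D}$ inverts $D$ on the complement $C_{g}\oplus \bigoplus_{\alpha\neq 1}A^{0}(M,{\bf E}_{\alpha}^{\ast}\otimes\C)\otimes V_{\alpha}\otimes\C$; the second summand is automatically preserved by $f^{\ast}$ by $T$-equivariance, while the first is preserved if and only if $f^{\ast}(C_{1})\subset C_{2}$, which is exactly Condition~(V). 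This is the main obstacle, and the only place in the argument where anything beyond formal functoriality of the ingredients is needed.

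Once $f_{\mathcal N}(b(v))=b(f_{\mathcal M}(v))$ is established, the definition $H(v)=\varphi_{n+1}({\mathcal I}(v))+\int_{0}^{t}H(dv)+D(b(v)t)$ propagates the homotopy identity to $\V_{n+1}$, since $f^{\ast}\otimes {\rm id}_{[t,dt]}$ commutes with multiplication by $t$, with the operator $\int_{0}^{t}$, and with $D$, and since the new terms $\varphi_{n+1}({\mathcal I}(v))$ and $\int_{0}^{t}H(dv)$ are controlled by the already-established compatibilities for ${\mathcal I}$ on $\V_{n+1}$ and for $H$ on $\V_{n}$ respectively. This closes the induction and proves both equalities.
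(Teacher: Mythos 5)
Your proposal is correct and follows essentially the same route as the paper's proof: induction over the stages ${\mathcal M}^{\ast}(n)$, using the exactness of $\,_{1}\phi_{n+1}(v)-\,_{1}\varphi_{n+1}(\,_{1}{\mathcal I}(v))-\int_{0}^{1}\,_{1}H(dv)$ together with the uniqueness in the construction of ${\mathcal I}$ to get the first identity, and then Condition (V) (via $f^{\ast}\circ D^{\ast}G_{D}=D^{\ast}G_{D}\circ f^{\ast}$ on $D(A^{0}(M_{1},{\mathcal O}_{\rho}\otimes\C))$) to get $b(f_{\mathcal M}(v))=f^{\ast}(b(v))$ and hence the homotopy identity. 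Your explicit splitting of ${\mathcal I}(v)$ into $a(v)$ and $a^{\prime}(v)$ merely spells out the uniqueness argument the paper compresses into ``see the construction of Subsection \ref{isho}.''
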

\begin{proof}
For each step $\,_{1}\phi_{n}:\,_{1}{\mathcal M}^{\ast}_{n}\to  A^{\ast}(M_{1}, {\mathcal O}_{\rho}\otimes \C)$, $\,_{1}\varphi_{n}:\,_{1}{\mathcal N}^{\ast}_{n}\to A^{\ast}(M_{1}, {\mathcal O}_{\rho^{\prime}}\otimes \C)$, $\,_{2}\phi_{n}:\,_{2}{\mathcal M}^{\ast}_{n}\to  A^{\ast}(M_{2}, {\mathcal O}_{\rho}\otimes \C)$, $\,_{2}\varphi_{n}:\,_{2}{\mathcal N}^{\ast}_{n}\to A^{\ast}(M_{2}, {\mathcal O}_{\rho^{\prime}}\otimes \C)$ (see Subsection  \ref{dedec}, \ref{dbd}),
we will prove the statement inductively.
For $n=1$, the claim is obvious.
Assuming  the claim for $n$, we prove for $n+1$.
It is sufficient to show that $f_{\mathcal N}(\,_{1}{\mathcal I}(v))=\,_{2}{\mathcal I}(v)( f_{\mathcal M}(v))$ and $\,_{2}H(f_{\mathcal M}( v))=(f^{\ast}\otimes {\rm id}_{[t,dt]})(\,_{1}H(v))$ for any $v\in \,_{1}\V_{n+1}$.
For $v\in \,_{1}\V_{n+1}$, the form
\[\,_{1}\phi_{n+1}(v)-\,_{1}\varphi_{n+1}(\,_{1}{\mathcal I}(v)))-\int^{1}_{0}\,_{1}H(dv)\]
is an exact form.
By the inductive assumption, 
we can say that $\,_{2}{\mathcal I}(df_{\mathcal M}( v))=d f_{\mathcal N} (\,_{1}{\mathcal I}(v))$ and 
\[\,_{2}\phi_{n+1}(f_{\mathcal M}(v))-\,_{2}\varphi_{n+1}(f_{\mathcal N}(\,_{1}{\mathcal I}(v)))-\int^{1}_{0}\,_{2}H(df_{\mathcal M}(v))\]
is an exact form.
This implies $f_{\mathcal N}(\,_{1}{\mathcal I}(v))=\,_{2}{\mathcal I}( f_{\mathcal M}(v))$ (see the construction of  Subsection \ref{isho}).
For
 \[b(f_{\mathcal M}( v))=D^{\ast}G_{D}\left(\,_{2}\phi_{n+1}(f_{\mathcal M}(v))-\,_{2}\varphi_{n+1}(f_{\mathcal N}(\,_{1}{\mathcal I}(v)))-\int^{1}_{0}\,_{2}H(df_{\mathcal M}(v))\right),
\]
 we have 
\[\,_{2}H(f_{\mathcal M}( v))=\,_{2}\varphi_{n+1}(\,_{2}{\mathcal I}(f_{\mathcal M}( v)))+\int^{t}_{0}\,_{2}H(df_{\mathcal M}(v))+D(b(f_{\mathcal M}( v))t).
\]
By the commutativity between $f^{\ast}$ and the map $D^{\ast}G_{D}$ as above,   the inductive assumption and  ${\mathcal I}_{2}(f_{\mathcal M}( v))=f_{\mathcal N}({\mathcal I}_{1}(v))$,  we can say that $b(f_{\mathcal M}( v))=f^{\ast}(b(v))$ and 
\begin{multline*}
\,_{2}H(f_{\mathcal M}( v))\\
=f^{\ast}(\,_{1}\varphi_{n+1}({\mathcal I}_{1}( v))+(f^{\ast}\otimes {\rm id}_{[t,dt]})\int^{t}_{0}\,_{1}H(dv)+(f^{\ast}\otimes {\rm id}_{[t,dt]})D(b( v)t)\\
=(f^{\ast}\otimes {\rm id}_{[t,dt]})(\,_{1}H(v)).
\end{multline*}
Hence the proposition follows.
\end{proof}
Since the map $f_{\mathcal N}$ is compatible with the bigradings, we have the following result.
\begin{corollary}\label{mixcom}
If $f$ satisfies the condition (V), then
$f_{\mathcal M}:\,_{1}{\mathcal M}^{\ast}\to \,_{2}{\mathcal M}^{\ast}$  is a morphism of $\R$-mixed Hodge structures.
\end{corollary}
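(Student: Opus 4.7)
The plan is to verify two compatibilities: that $f_{\mathcal M}$ preserves the weight filtration on $\,_l{\mathcal M}^{\ast}$ and that $f_{\mathcal M}\otimes\C$ preserves the Hodge filtration. Because the two filtrations come from genuinely different sources in the definition of the main mixed Hodge structure (Theorem \ref{Morg}), I would treat the two compatibilities separately and finally combine them.

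For the weight filtration, I would argue directly from the inductive construction of $f_{\mathcal M}$ recalled just before the statement. Assuming inductively that $f_{\mathcal M}(\,_1\V_i)\subset \,_2\V_i$ for $i\le n$, one takes $v\in \,_1\V_{n+1}$ and applies the DGA property to $f_{\mathcal M}(dv)\in \sum_{i+j=n+1}\,_2\V_i\wedge\,_2\V_j$. Since $d\circ f_{\mathcal M}(dv)=f_{\mathcal M}(d^{2}v)=0$, the element $f_{\mathcal M}(dv)$ lies in the kernel of $d$ on $\sum_{i+j=n+1}\,_2\V_i\wedge\,_2\V_j$, which by the very definition of $\,_2\V_{n+1}$ is identified with $\,_2\V_{n+1}$ via the injection $d$. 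Hence $f_{\mathcal M}(v)\in \,_2\V_{n+1}$, and the weight filtration $W_{k}=\bigoplus_{i\le k}\V_i$ is preserved.

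For the Hodge filtration I would rely on Proposition \ref{Hcommu} together with the bigrading compatibility of $f_{\mathcal N}$. Under Condition (V), Proposition \ref{Hcommu} gives the identity $\,_{2}{\mathcal I}\circ f_{\mathcal M}=f_{\mathcal N}\circ \,_{1}{\mathcal I}$. Because $f^{\ast}$ is type $(0,0)$ with respect to the bigrading on $A^{\ast}(M_{l},{\mathcal O}_{\rho}\otimes\C)$, the inductive construction of $\,_{l}\varphi$ and of the pieces $\V^{P,Q}$ (as kernels of $d$ on their bigraded ambient space) produces $f_{\mathcal N}$ preserving each $(\,_l{\mathcal N}^{\ast})^{P,Q}$ summand, as is explicitly noted just above the corollary. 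Consequently $f_{\mathcal N}$ preserves the filtration $\bigoplus_{P\ge r}(\,_l{\mathcal N}^{\ast})^{P,Q}$, and pulling this back through $\,_l{\mathcal I}$ shows
\[
f_{\mathcal M}\bigl(F^{r}(\,_1{\mathcal M}^{\ast}_{\C})\bigr)=\,_2{\mathcal I}^{-1}\bigl(f_{\mathcal N}(\bigoplus_{P\ge r}(\,_1{\mathcal N}^{\ast})^{P,Q})\bigr)\subset F^{r}(\,_2{\mathcal M}^{\ast}_{\C}).
\]
Since $f_{\mathcal M}$ is already defined on the real form, combining this with the weight-filtration step yields the claim.

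The only real obstacle is ensuring that the two independent constructions agree, and this is exactly what Proposition \ref{Hcommu} (hence Condition (V)) provides. Without Condition (V) the commutativity $f^{\ast}\circ D^{\ast}G_{D}=D^{\ast}G_{D}\circ f^{\ast}$ fails, so the canonical isomorphisms $\,_1{\mathcal I}$ and $\,_2{\mathcal I}$ need not intertwine $f_{\mathcal M}$ and $f_{\mathcal N}$, and the Hodge-filtration argument collapses; thus the crucial input is the volume-form normalization encoded in Condition (V) and its consequences in Remark \ref{volvol}.
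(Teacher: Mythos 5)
Your proof is correct and follows essentially the same route as the paper: the paper's (one-line) argument is precisely that Proposition \ref{Hcommu} gives $\,_{2}{\mathcal I}\circ f_{\mathcal M}=f_{\mathcal N}\circ \,_{1}{\mathcal I}$ and that $f_{\mathcal N}$ preserves the bigradings, hence $f_{\mathcal M}$ preserves $F^{\ast}$, while preservation of $W_{\ast}$ is built into the inductive construction of $f_{\mathcal M}$ exactly as you describe. You have merely filled in the details the paper leaves implicit.
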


Since $C_{i}=\Delta_{D} (A^{0}(M_{i})\otimes \C)$,
if $f^{\ast}$ commutes with the laplacian $\Delta_{D}$ on the $0$-forms $A^{0}(M_{1})\otimes \C$, then the condition (V) holds.
It is known that a differential map $f$ between compact Riemannian manifolds satisfies the commutativity between the pull-back $f^{\ast}$ and the laplacian  on the $0$-forms if and only if $f$ is a harmonic Riemannian submersion (see \cite{Wat}).
Since a holomorphic map between K\"ahler manifolds is harmonic, a K\"ahler submersion (i.e. holomorphic Riemannian submersion between K\"ahler manifolds) $f:M_{2}\to M_{1}$ satisfies the condition (V).




\begin{remark}
In \cite{Mor}, Morgan did not obtain the functoriality on his mixed Hodge structures. (see \cite{Morco}).
\end{remark}

\subsection{Constructions for alternate volume forms}\label{altV}
Let  $d\mu$ be a volume form which is different from the volume form $d\mu_{g}$ on $M$ associated with a K\"ahler metric $g$.
Let 
\[C_{\mu}=\left\{\Psi\in A^{0}(M)\otimes \C:  \int_{M} \Psi d\mu=0  \right\}. 
\]
Define $\delta_{\mu}$ as the inverse map of 
\[D: C_{\mu}\oplus \bigoplus_{\alpha\not=1}A^{0}(M,{\bf E}^{\ast}_{\alpha}\otimes \C)\otimes V_{\alpha}\otimes \C\to D( A^{0}(M,{\mathcal O}_{\rho}\otimes \C)).
\]
Then we rewrite the constructions of $\mathcal I$ and $H$ in Subsection \ref{isho}.
In each inductive step, we only replace 
\[b(v)=D^{\ast}G_{D}\left(\phi_{n+1}(v)-\varphi_{n+1}\left({\mathcal I}(v)\right)-\int^{1}_{0}H(dv)\right)
\]
with
\[b(v)=\delta_{\mu}\left(\phi_{n+1}(v)-\varphi_{n+1}\left({\mathcal I}(v)\right)-\int^{1}_{0}H(dv)\right).
\]
As the result of this replacement, we obtain another isomorphism  ${\mathcal I}_{\mu}$ and another homotopy $H_{\mu}$.
Then, by the same proof of Theorem \ref{Morg}, we also have:
\begin{theorem}\label{NWWFUN}
For the  isomorphism ${\mathcal I}_{\mu}:{\mathcal M}^{\ast}_{\C}\to {\mathcal N}$, taking the filtration $F^{r}_{\mu}({\mathcal M}^{\ast}_{\C})={\mathcal I}^{-1}_{\mu}(\bigoplus_{P\ge r} ({\mathcal N}^{\ast})^{P,Q})$, $({\mathcal M}^{\ast}, W_{\ast},F^{\ast}_{\mu})$ is an $\R$-mixed Hodge structure which is compatible with the differential and the multiplication.
The co-module structure ${\mathcal M}^{\ast}\to \R[T]\otimes {\mathcal M}^{\ast}$ corresponding to the $T$-module structure on ${\mathcal M}^{\ast}$ is a morphism of $\R$-mixed Hodge structures.

\end{theorem}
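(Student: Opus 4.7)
The plan is to transport the proof of Theorem~\ref{Morg} to this new setting, with $\mathcal{I}_\mu$, $H_\mu$, and $\delta_\mu$ replacing $\mathcal{I}$, $H$, and $D^{\ast}G_D$ throughout. The only properties of $D^{\ast}G_D$ that are actually used in Subsection~\ref{isho} and in the proof of Theorem~\ref{Morg} are that it is a $T$-equivariant right inverse of $D$ on $D(A^0(M,\mathcal{O}_\rho\otimes\C))$. Both properties hold for $\delta_\mu$: the right-inverse property is built into its definition, and $T$-equivariance holds because Zariski density of $\rho(\pi_1(M,x))$ in $T$ forces $\ker D$ on $A^0(M,\mathcal{O}_\rho\otimes\C)$ to sit entirely inside the trivial isotype $\alpha=1$ (no nontrivial irreducible ${\bf E}_\alpha^{\ast}$ supports nonzero flat sections), so the complement $C_\mu \oplus \bigoplus_{\alpha\neq 1} A^0(M,{\bf E}_\alpha^{\ast}\otimes\C)\otimes V_\alpha$ is $T$-stable ($T$ acts trivially on scalar functions and preserves each isotypic summand, so $C_\mu$ lies in the trivial isotype and is $T$-invariant).

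Granting this, the inductive construction of Subsection~\ref{isho}, modified as in Subsection~\ref{altV}, produces a $T$-equivariant filtered isomorphism $\mathcal{I}_\mu : \mathcal{M}^{\ast}_{\C} \to \mathcal{N}^{\ast}$ and a $T$-equivariant homotopy $H_\mu$ from $\varphi\circ\mathcal{I}_\mu$ to $\phi$. The co-module statement then reduces to the mixed Hodge structure statement exactly as in Theorem~\ref{Morg}: $T$-equivariance of $\mathcal{I}_\mu$ and the fact that the co-module map on $\mathcal{N}^{\ast}$ preserves the bigrading together imply that $\mathcal{M}^{\ast}_{\C} \to \mathcal{M}^{\ast}_{\C} \otimes \C[T]$ preserves $F^{\ast}_\mu$. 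To establish the mixed Hodge structure itself, I would show by induction on $n$ that the DGA map $\mathcal{I}'_\mu : \mathcal{M}^{\ast}_{\C} \to \mathcal{N}^{\ast}$ corresponding to $Gr^W(\mathcal{I}_\mu)$ coincides with the split map $\mathcal{I}_{sp}$: the case $n=1$ is immediate from the construction, and for the step from $n$ to $n+1$, the identity $d\mathcal{I}'_\mu(v) = \mathcal{I}_{sp}(dv)$ (valid because $\mathcal{I}_\mu$ is a DGA map on $\mathcal{M}^{\ast}_{\C}(n+1)$) combined with injectivity of $d$ on $\W_{n+1}$ (noted in Subsection~\ref{dbd}) forces $\mathcal{I}'_\mu(v) = \mathcal{I}_{sp}(v)$ for $v\in\V_{n+1}$. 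This identifies the filtration induced by $F^{\ast}_\mu$ on each $Gr^W_k = \V_k$ with the $\R$-Hodge structure of weight $k$ of Remark~\ref{sp11}, and compatibility with $d$ and the product is inherited from the fact that $\mathcal{I}_\mu$ is a filtered DGA isomorphism between objects on which both $W_{\ast}$ and the bigrading of $\mathcal{N}^{\ast}$ are multiplicative and preserved by the differential.

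The main obstacle is genuinely just careful bookkeeping: one must verify that neither the construction of $\mathcal{I}$ in Subsection~\ref{isho} nor the $Gr^W$ argument in Theorem~\ref{Morg} secretly invokes analytic properties of $D^{\ast}G_D$ beyond $T$-equivariance and the one-sided inverse property—for instance, self-adjointness, orthogonality, or any compatibility with the $\Delta_D$-Hodge decomposition. Inspection shows that only the algebraic splitting of $D$ on $0$-forms ever enters, so the substitution of $\delta_\mu$ is formally harmless, and Theorem~\ref{NWWFUN} follows with no new analytic input beyond the well-definedness of $\delta_\mu$ already used to set it up in Subsection~\ref{altV}.
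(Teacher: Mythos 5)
Your proposal is correct and follows essentially the same route as the paper, which proves Theorem \ref{NWWFUN} simply by invoking the proof of Theorem \ref{Morg} verbatim after the replacement of $D^{\ast}G_{D}$ by $\delta_{\mu}$. Your extra verification that only the $T$-equivariant one-sided-inverse property of $D^{\ast}G_{D}$ on $D(A^{0}(M,{\mathcal O}_{\rho}\otimes \C))$ enters the construction of ${\mathcal I}$ and $H$ (and that the $Gr^{W}$ argument, which sees only $a(v)$ and not $b(v)$, is unaffected) is exactly the bookkeeping the paper leaves implicit.
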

We notice that $C_{\mu}$ is not closed under the multiplication.
Thus our construction may be  different from the construction by the reduced bar construction of an augmented multiplicative mixed Hodge complex as in \cite{HainI}.

This alternate construction is interesting for the functoriality.
Let $(M_{1},g_{1})$ and $(M_{2},g_{2})$ be compact K\"ahler manifolds
and  $f:M_{2}\to M_{1}$ a holomorphic map.
Excepting the Condition (V),  we consider the same situation as in Subsection \ref{Funcmx}.
We have the
$T$-equivariant DGA maps $f_{\mathcal M}:\,_{1}{\mathcal M}^{\ast}\to \,_{2}{\mathcal M}^{\ast}$ and 
$f_{\mathcal N}:\,_{1}{\mathcal N}^{\ast}\to \,_{2}{\mathcal N}^{\ast}$.
Let $d\mu^{\prime}_{1}$ and $d\mu^{\prime}_{2}$ be volume forms on  $(M_{1},g_{1})$ and $(M_{2},g_{2})$ respectively 
which are different from the volume forms associated with the K\"ahler metrics.
We consider the following condition on $f:M_{2}\to M_{1}$.
\begin{condV2}
For any $\Psi\in A^{0}(M_{1})\otimes \C$ satisfying $\int_{M_{1}} \Psi d\mu^{\prime}_{1}=0$, we have
\[\int_{M_{2}} (f^{\ast}\Psi )d\mu^{\prime}_{2}=0.\] 
\end{condV2}
Let 
\[C_{\mu_{i}^{\prime}}=\left\{\Psi\in A^{0}(M)\otimes \C:  \int_{M_{i}} \Psi d\mu^{\prime}_{i}=0  \right\}
\]
for $i=1, 2$.
We define the maps $\delta_{\mu^{\prime}_{i}}$ as above.
The Condition (V')
 holds if and only if $f^{\ast}(C_{\mu_{1}^{\prime}})\subset C_{\mu_{2}^{\prime}}$.
By this, the Condition (V') implies the commutativity
\[f^{\ast}\circ \delta_{\mu^{\prime}_{1}}=\delta_{\mu^{\prime}_{2}}\circ f^{\ast}
\]
on $D(A^{0}(M_{1}, {\mathcal O}_{\rho}\otimes \C))$.
For the isomorphisms ${\mathcal I}_{\mu^{\prime}_{i}}$ and  homotopies $H_{\mu^{\prime}_{i}}$ associated with the volume forms $\mu^{\prime}_{i}$ as above, by the same proof of Proposition \ref{Hcommu},
we have:
\begin{proposition}
If $f$ satisfies the Condition (V'), then
\[{\mathcal I}_{\mu^{\prime}_{2}}\circ f_{\mathcal M}=f_{\mathcal N}\circ {\mathcal I}_{\mu^{\prime}_{1}}\]
 and \[H_{\mu^{\prime}_{2}}\circ f_{\mathcal M}=(f^{\ast}\otimes {\rm id}_{[t,dt]})\circ  H_{\mu^{\prime}_{1}}.\]
\end{proposition}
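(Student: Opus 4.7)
The plan is to mimic verbatim the inductive proof of Proposition \ref{Hcommu}, with the operator $D^{\ast}G_{D}$ replaced throughout by $\delta_{\mu^{\prime}_{i}}$ and Condition (V) replaced by Condition (V'). The only substantive input needed is the commutativity $f^{\ast}\circ \delta_{\mu^{\prime}_{1}}=\delta_{\mu^{\prime}_{2}}\circ f^{\ast}$ on the image $D(A^{0}(M_{1},{\mathcal O}_{\rho}\otimes \C))$, which the excerpt already deduces from Condition (V'). Once this commutativity is in hand, the rest of the proof carries over word for word, since the definitions of ${\mathcal I}_{\mu^{\prime}_{i}}$ and $H_{\mu^{\prime}_{i}}$ differ from those of ${\mathcal I}$ and $H$ only in the inductive replacement of $b(v)$ by $\delta_{\mu^{\prime}_{i}}$ applied to the same expression.

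Concretely, I would induct on the stage $n$ of the tower $\,_{l}{\mathcal M}^{\ast}(n)$, $\,_{l}{\mathcal N}^{\ast}(n)$ ($l=1,2$). The base case $n=1$ is immediate from the definitions of $\,_{l}{\mathcal I}_{\mu^{\prime}_{l}}$ and $\,_{l}H_{\mu^{\prime}_{l}}$ on $\,_{l}\V_{1}$, together with the fact that $f^{\ast}$ restricts to a map between the harmonic subspaces and hence agrees with the DGA extensions $f_{\mathcal M}$ and $f_{\mathcal N}$ on degree-$1$ generators. For the inductive step, fixing $v\in \,_{1}\V_{n+1}$, I would first observe, exactly as in the proof of Proposition \ref{Hcommu}, that the form
\[
\,_{1}\phi_{n+1}(v)-\,_{1}\varphi_{n+1}(\,_{1}{\mathcal I}_{\mu^{\prime}_{1}}(v))-\int^{1}_{0}\,_{1}H_{\mu^{\prime}_{1}}(dv)
\]
is $D$-exact, so it lies in the domain of $\delta_{\mu^{\prime}_{1}}$, and similarly for its $f^{\ast}$-image on $M_{2}$. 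Applying the inductive hypothesis together with $\,_{l}\phi$, $\,_{l}\varphi$, and the DGA maps $f_{\mathcal M}$, $f_{\mathcal N}$ all commuting with $f^{\ast}$, I would conclude $f_{\mathcal N}(\,_{1}{\mathcal I}_{\mu^{\prime}_{1}}(v))=\,_{2}{\mathcal I}_{\mu^{\prime}_{2}}(f_{\mathcal M}(v))$ from uniqueness of the closed representative in the construction of Subsection \ref{isho}.

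For the homotopy identity, I would then compute
\[
b_{\mu^{\prime}_{2}}(f_{\mathcal M}(v))=\delta_{\mu^{\prime}_{2}}\left(\,_{2}\phi_{n+1}(f_{\mathcal M}(v))-\,_{2}\varphi_{n+1}(f_{\mathcal N}(\,_{1}{\mathcal I}_{\mu^{\prime}_{1}}(v)))-\int^{1}_{0}\,_{2}H_{\mu^{\prime}_{2}}(df_{\mathcal M}(v))\right),
\]
and using the key commutativity $f^{\ast}\circ \delta_{\mu^{\prime}_{1}}=\delta_{\mu^{\prime}_{2}}\circ f^{\ast}$ together with the inductive hypothesis applied to the terms inside, deduce $b_{\mu^{\prime}_{2}}(f_{\mathcal M}(v))=f^{\ast}(b_{\mu^{\prime}_{1}}(v))$. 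Substituting this into the explicit formula $\,_{2}H_{\mu^{\prime}_{2}}(v)=\,_{2}\varphi_{n+1}(\,_{2}{\mathcal I}_{\mu^{\prime}_{2}}(v))+\int^{t}_{0}\,_{2}H_{\mu^{\prime}_{2}}(dv)+D(b_{\mu^{\prime}_{2}}(v)t)$ and using that $f^{\ast}$ commutes with $D$, the wedge with $dt$, and integration in $t$, one obtains the desired equality on generators, which extends multiplicatively to all of $\,_{1}{\mathcal M}^{\ast}(n+1)$.

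The main (and essentially only) obstacle is verifying cleanly that Condition (V') yields the commutation of $f^{\ast}$ with $\delta_{\mu^{\prime}_{i}}$ on the whole image $D(A^{0}(M_{1},{\mathcal O}_{\rho}\otimes \C))$, including the components indexed by nontrivial irreducibles $V_{\alpha}$. On the trivial component this is immediate from Condition (V') and the definition of $C_{\mu^{\prime}_{i}}$; on the nontrivial components the operator $\delta_{\mu^{\prime}_{i}}$ does not involve the volume form at all (it is just the inverse of $D$ on $\bigoplus_{\alpha\ne 1}A^{0}(M,{\bf E}^{\ast}_{\alpha}\otimes \C)\otimes V_{\alpha}\otimes \C$), so the commutation with the $T$-equivariant pullback $f^{\ast}$ is formal. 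Once this is recorded, the induction runs exactly parallel to the proof of Proposition \ref{Hcommu}, and no new ideas are needed.
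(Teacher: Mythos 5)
Your proposal is correct and is exactly the argument the paper intends: the paper proves this proposition simply by declaring it follows "by the same proof of Proposition \ref{Hcommu}," with $D^{\ast}G_{D}$ replaced by $\delta_{\mu^{\prime}_{i}}$ and Condition (V) by Condition (V'), the only new input being the commutativity $f^{\ast}\circ\delta_{\mu^{\prime}_{1}}=\delta_{\mu^{\prime}_{2}}\circ f^{\ast}$ deduced from $f^{\ast}(C_{\mu^{\prime}_{1}})\subset C_{\mu^{\prime}_{2}}$. Your additional remark that on the components indexed by nontrivial irreducibles $\delta_{\mu^{\prime}_{i}}$ does not involve the volume form, so the commutation there is formal from $T$-equivariance of $f^{\ast}$, is a correct and slightly more careful observation than the paper records.
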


Thus we have the functoriality as Corollary \ref{mixcom} for the $\R$-mixed Hodge structures as in Theorem \ref{NWWFUN}.

Suppose  $f:M_{2}\to M_{1}$ is a holomorphic submersion.
Then we can define the push-forward $f_{\ast}: A^{\ast}(M_{2})\to A^{\ast-r}(M_{1})$ for $r=\dim M_{2}- \dim M_{1}$.
We have
\[\int_{M_{2}}(f^{\ast}\Psi) d\mu^{\prime}_{2} =\int_{M_{1} }\Psi f_{\ast}d\mu^{\prime}_{2}
\]
for any $\Psi\in A^{0}(M_{1})\otimes \C$.
Thus,  if $f_{\ast}d\mu^{\prime}_{2}=d\mu^{\prime}_{1}$, then the Condition (V') holds.

\section{Constructing VMHSs}\label{MAIVM}
In this section, we assume the same settings and use same notations as in the previous section.
We notice that the arguments in this section are valid for the constructions associated with arbitrary volume forms as in  Subsection \ref{altV}.
\subsection{Mixed Hodge $(T,\frak u)$-representations}

\begin{definition}\label{Tmh}
Let $V$ be a finite-dimensional $\R$-vector space with an $\R$-mixed Hodge structure $(W_{\ast},F^{\ast})$.
Then  a $T$-module structure on $V$ is called {\em mixed Hodge} if the corresponding $\R[T]$-co-module structure $V\to V\otimes \R[T]$ is a morphism of $\R$-mixed Hodge structures.
\end{definition}

\begin{lemma}\label{irdho}
 Let $V$ be a finite-dimensional $\R$-vector space
with an $\R$-mixed Hodge structure $(W_{\ast},F^{\ast})$.
We suppose that $V$ admits a mixed Hodge $T$-module structure.
Then,  for any irreducible representation $V_{\alpha}$ of $T$,  $(V_{\alpha}^{\ast}\otimes V)^{T}$ is an $\R$-mixed Hodge substructure  of $V_{\alpha}^{\ast}\otimes V$.

\end{lemma}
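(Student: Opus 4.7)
The plan is to exhibit $(V_{\alpha}^{\ast}\otimes V)^{T}$ as the kernel of an explicit morphism of $\R$-mixed Hodge structures, and then invoke abelianness of the category of $\R$-MHS.

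First, I would equip $V_{\alpha}^{\ast}$ with the $\R$-Hodge structure dual to the one induced on $V_{\alpha}$ from $(V_{0},{\bf F}^{\ast}_{x})$; by Subsection \ref{AUre} the co-module $V_{\alpha}^{\ast}\to V_{\alpha}^{\ast}\otimes \R[T]$ is then a morphism of $\R$-Hodge structures. Consequently $V_{\alpha}^{\ast}\otimes V$ inherits a tensor-product $\R$-MHS, and I would verify that its diagonal co-module $\mu\colon V_{\alpha}^{\ast}\otimes V\to V_{\alpha}^{\ast}\otimes V\otimes \R[T]$ is a morphism of $\R$-MHS. This $\mu$ factors as $\mu_{V_{\alpha}^{\ast}}\otimes \mu_{V}$ followed by a swap of tensor factors and by $\mathrm{id}\otimes m$, where $m\colon \R[T]\otimes \R[T]\to \R[T]$ is multiplication; the first two maps are morphisms of $\R$-MHS by the hypothesis on $V$ and by the previous sentence, and $m$ is a morphism of $\R$-Hodge structures as recorded in Subsection \ref{AUre}.

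Next I would identify $(V_{\alpha}^{\ast}\otimes V)^{T}$ with a kernel. By the definition of the co-module, this subspace equals $\{w : \mu(w)=w\otimes 1\}$, where $1\in \R[T]$ is the constant function, and so it is $\ker\tilde\mu$ for $\tilde\mu(w):=\mu(w)-w\otimes 1$. Under the decomposition $\R[T]\cong \bigoplus_{\beta}V_{\beta}^{\ast}\otimes V_{\beta}$ the element $1$ lies in the trivial isotypical summand $V_{0}^{\ast}\otimes V_{0}\cong \R$, which is pure of type $(0,0)$; hence $w\mapsto w\otimes 1$ is a morphism of $\R$-MHS, and so is $\tilde\mu$. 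Since the category of $\R$-MHS is abelian, $\ker\tilde\mu=(V_{\alpha}^{\ast}\otimes V)^{T}$ is an $\R$-MHS substructure of $V_{\alpha}^{\ast}\otimes V$, which is the desired conclusion.

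The delicate step is the first one, namely showing that the diagonal co-module $\mu$ is a morphism of $\R$-MHS; this is precisely where the multiplicative compatibility of the Hodge structure on $\R[T]$ established in Subsection \ref{AUre} does the essential work. Everything else is a formal manipulation in the abelian category of $\R$-mixed Hodge structures.
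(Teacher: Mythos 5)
Your proof is correct and follows essentially the same route as the paper: both realize $(V_{\alpha}^{\ast}\otimes V)^{T}$ as the kernel of a morphism of $\R$-mixed Hodge structures built from the co-module map (the paper composes $\mu$ with the quotient by $(V_{\alpha}^{\ast}\otimes V)\otimes\langle 1\rangle$, you subtract $w\otimes 1$ — equivalent since $\langle 1\rangle$ is a type $(0,0)$ summand) and then conclude by abelianness of the category of $\R$-MHS, citing the same input from Subsection \ref{AUre}. Your explicit factorization of the tensor-product co-module through $\mathrm{id}\otimes m$ is a correct filling-in of what the paper compresses into ``by the assumption''; only beware that the trivial summand of $\R[T]$ should not be denoted $V_{0}$, which the paper reserves for the standard representation.
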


\begin{proof}
For the $T$-module $V_{\alpha}^{\ast}\otimes V$, we consider the corresponding $\R[T]$-co-module structure $V_{\alpha}^{\ast}\otimes V\to (V_{\alpha}^{\ast}\otimes V)\otimes \R[T]$.
Then the $T$-fixed part $(V_{\alpha}^{\ast}\otimes V)^{T}$ is the kernel of the map
\[V_{\alpha}^{\ast}\otimes V\to (V_{\alpha}^{\ast}\otimes V)\otimes \R[T]/(V_{\alpha}^{\ast}\otimes V)\otimes\langle 1\rangle .
\]
By the assumption, the map 
\[V_{\alpha}^{\ast}\otimes V\to (V_{\alpha}^{\ast}\otimes V)\otimes \R[T]/(V_{\alpha}^{\ast}\otimes V)\otimes\langle 1\rangle \]
is a morphism of $\R$-mixed Hodge structures.
Thus $(V_{\alpha}^{\ast}\otimes V)^{T}$ is an $\R$-mixed Hodge substructure  of $V_{\alpha}^{\ast}\otimes V$ (see \cite[Section 3.1]{PS}).

\end{proof}
As in Proposition \ref{spitto}, for some $b\in {\rm Aut}_{1}(V_{\C},W_{\ast})$ we can write $F^{\ast}=b^{-1}F_{sp}^{\ast}$ such that $(W_{\ast},F^{\ast}_{sp})$ is an $\R$-split $\R$-mixed Hodge structure.
Now we should remark that the $V$'s occurring  here are not $\V$'s  that were  used in the construction of ${\mathcal M}^{\ast}$.
In particular this $b$ is different from the one in the previous section.

We consider the canonical  $1$-minimal model ${\mathcal M}^{\ast}$ and the dual Lie algebra ${\frak u}$ with the $\R$-mixed Hodge structures $(W_{\ast}, F^{\ast})$ as in the last section.

\begin{definition}\label{mixuu}
Let $V$ be a finite-dimensional $\R$-vector space with  an $\R$-mixed Hodge structure $(W_{\ast},F^{\ast})$.
Let  $\Omega:{\frak u}\to {\rm End}(V)$ be a representation.
The representation  $\Omega$ is 
 called {\em mixed Hodge}  if $\Omega:{\frak u}\to {\rm End}(V)$ is a  morphism of $\R$-mixed Hodge structures.
 \end{definition}

For a finite-dimensional $\R$-vector space $V$ with  an $\R$-mixed Hodge structure $(W_{\ast},F^{\ast})$, ${\frak n}=
W_{-1}({\rm End}(V))$ is a nilpotent Lie algebra.
By $W_{-1}{\frak u}=\frak u$, if $\Omega:{\frak u}\to {\rm End}(V)$ is compatible with the weight filtrations $W_{\ast}$, then  $\Omega(\frak u)\subset {\frak n}$.
A representation $\Omega:{\frak u}\to {\rm End}(V)$ is identified with an element $\Omega\in {\mathcal M}^{1}\otimes {\rm End}(V)$ satisfying the Maurer-Cartan equation $d\Omega+\frac{1}{2}[\Omega,\Omega]=0$.
For this identification,  if $\Omega:{\frak u}\to {\rm End}(V)$ is compatible with the weight filtrations $W_{\ast}$, then  $\Omega\in {\mathcal M}^{1}\otimes  {\frak n}$.

\begin{remark}\label{spbig}
Let $V$   a finite-dimensional $\R$-vector space with  an $\R$-mixed Hodge structure $(W_{\ast},F^{\ast})$.
As in Proposition \ref{spitto}, for some $b\in {\rm Aut}_{1}(V_{\C},W_{\ast})$ we can write $F^{\ast}=b^{-1}F_{sp}^{\ast}$ such that $(W_{\ast},F^{\ast}_{sp})$ is an $\R$-split
 $\R$-mixed Hodge structure on $V$.
 Consider the bigrading $V_{\C}=\bigoplus V^{p,q}$ for this $\R$-split $\R$-mixed Hodge structure and write $V_{r}=\left(\bigoplus_{p+q=r} V^{p,q}\right)\cap V$.
 Then, a representation  $\Omega:{\frak u}\to {\rm End}(V)$ is a morphism of $\R$-mixed Hodge structures if 
 \[\Omega \left(\bigoplus_{i\le r} \V_{i}^{\ast}\right) \left(\bigoplus_{i\le s} V_{i}\right)\subset \bigoplus_{i\le r+s}V_{i}
 \]
 and 
 \[ {\mathcal I}(\Omega )\left(\bigoplus_{-P\ge r} (\V^{P,Q})^{\ast} \right) b^{-1} \left(\bigoplus_{p\ge s} V^{p,q}\right)\subset   b^{-1}\left( \bigoplus_{p\ge r+s} V^{p,q}\right).
 \]
\end{remark}

\begin{definition}
{\em A mixed Hodge $(T,\frak u)$-representation} is  $(V,W_{\ast}, F^{\ast}, \Omega)$ so that:
\begin{enumerate}
\item  $V$ is  a finite-dimensional $\R$-vector space with  an $\R$-mixed Hodge structure $(W_{\ast},F^{\ast})$.
\item $V$ is a $T$-module and it is mixed Hodge as in Definition \ref{Tmh}.
\item $\Omega$ is a mixed Hodge representation as in Definition \ref{mixuu}.

\item  $\Omega:{\frak u}\to {\rm End}(V)$ is $T$-equivariant.
\end{enumerate}
\end{definition}

\subsection{Flat bundles associated with a mixed Hodge $(T,\frak u)$-representation}
For a  mixed Hodge $(T,\frak u)$-representation
$(V,W_{\ast}, F^{\ast}, \Omega)$, we consider the following  flat bundles.

\begin{itemize}
\item Define the ${\mathcal C}^{\infty}$-vector bundle ${\bf E}=\bigoplus_{\alpha} (V_{\alpha}^{\ast}\otimes V)^{T}\otimes {\bf E}_{\alpha}$ with the flat connection $D=\bigoplus_{\alpha} D_{\alpha}$.
\begin{itemize}
\item  We can identify $\left(A^{\ast}(M,{\mathcal O}_{\rho})\otimes {\rm End}(V)\right)^{T}$ with $A^{\ast}(M, {\rm End}(\bf E))$.
\item By the arguments after Definition \ref{mixuu}, we can regard $\Omega \in ({\mathcal M}^{1}\otimes {\rm End}(V))^{T}$ satisfying the Maurer-Cartan equation.
\item By the maps $\phi:{\mathcal M}^{\ast}\to  A^{\ast}(M,{\mathcal O}_{\rho})$ and $\varphi \circ {\mathcal I}: {\mathcal M}^{\ast}_{\C}\to  A^{\ast}(M,{\mathcal O}_{\rho}\otimes \C)$,
we obtain the Maurer-Cartan elements $\Omega_{\phi}=\phi(\Omega)\in A^{1}(M, {\rm End}(\bf E))$ and  $\Omega_{\varphi}=\varphi(I(\Omega))\in A^{1}(M, {\rm End}(\bf E_{\C}))$. 
\end{itemize}
\item Define the flat bundle ${\bf E}_{\Omega_{\phi}}$ as the vector bundle  ${\bf E}$ with the flat connection $D+\Omega_{\phi}$.
\item Define the flat bundle ${\bf E}_{\Omega_{\varphi}}$ as the vector bundle  ${\bf E}_{\C}$ with the flat connection $D+\Omega_{\varphi}$.
\end{itemize}

By Lemma \ref{irdho}, each  $(V_{\alpha}^{\ast}\otimes V)^{T}$
admits an $\R$-mixed Hodge structure.
Each ${\bf E}_{\alpha}$ is an $\R$-VHS by ${\bf E}_{\alpha}={\mathbb S}_{\lambda}{\bf E}_{0}\cap {\bf E}^{[d]}_{0}$.
Thus, we obtain the increasing filtration ${\bf W}_{\ast}$ on ${\bf E}$ induced by the weight filtrations of $(V_{\alpha}^{\ast}\otimes V)^{T}$ and the weights of ${\bf E}_{\alpha}$ 
and decreasing filtration ${\bf F}^{\ast}$ on ${\bf E}_{\C}$ induced by Hodge filtrations on $(V_{\alpha}^{\ast}\otimes V)^{T}$ and ${\bf E}_{\alpha}$.

\begin{lemma}\label{WETTT}
On any one of the flat bundles ${\bf E}$, ${\bf E}_{\Omega_{\phi}}$ and ${\bf E}_{\Omega_{\varphi}}$,
 the filtration ${\bf W}_{\ast}$  is a  filtration of flat bundles.
Moreover, for any $i$,  the identity map on ${\bf E}$ induces isomorphisms of flat bundles
$Gr_{i}^{\bf W}{\bf E} \cong Gr_{i}^{\bf W}{\bf E}_{\Omega_{\phi}}$ and 
$Gr_{i}^{\bf W}{\bf E}_{\C}\cong  Gr_{i}^{\bf W}{\bf E}_{\Omega_{\varphi}}$.
\end{lemma}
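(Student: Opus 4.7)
The plan is to reduce everything to the single observation that $\Omega:\frak{u}\to {\rm End}(V)$, being a morphism of $\R$-mixed Hodge structures, takes values in $W_{-1}{\rm End}(V)$; equivalently, $\Omega$ strictly lowers the weight filtration on $V$. The three claims then follow from (a) a direct-sum description of $\bf W_\ast$ on $\bf E$, and (b) the fact that an endomorphism-valued $1$-form that strictly decreases the weight filtration preserves the filtration and induces $0$ on associated graded pieces.

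First I would make the weight filtration ${\bf W}_\ast$ on $\bf E$ explicit. Each ${\bf E}_\alpha$ is pure of some weight $w_\alpha$, and by Lemma \ref{irdho} the space $(V_\alpha^\ast\otimes V)^T$ carries an $\R$-MHS as a sub-MHS of $V_\alpha^\ast\otimes V$. Thus
\[
{\bf W}_k {\bf E}\;=\;\bigoplus_\alpha W_{k-w_\alpha}\bigl((V_\alpha^\ast\otimes V)^T\bigr)\otimes {\bf E}_\alpha ,
\]
a direct sum of flat subbundles of $\bf E$ (for the connection $D=\bigoplus D_\alpha$). Hence $D$ preserves ${\bf W}_\ast$. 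The canonical fiber identification ${\bf E}_x\cong V$ coming from $V\cong\bigoplus_\alpha(V_\alpha^\ast\otimes V)^T\otimes V_\alpha$ is moreover an isomorphism of filtered vector spaces $({\bf E}_x,{\bf W}_\ast)\cong (V,W_\ast)$.

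The key step is the translation of ``$\Omega$ is a morphism of MHS'' into a filtration statement for $\Omega_\phi$. Since $W_{-1}\frak{u}=\frak{u}$ (the filtration on $\frak{u}$ coming from the lower central series, cf.\ Subsection \ref{proni}), a morphism of MHS $\Omega\colon\frak{u}\to{\rm End}(V)$ lands in $W_{-1}{\rm End}(V)$; thus every $\Omega(X)$ maps $W_k V\to W_{k-1}V$. Applying $\phi$ preserves this pointwise property, so under the identification above,
\[
\Omega_\phi\cdot\Gamma({\bf W}_k{\bf E})\;\subset\; A^1(M,{\bf W}_{k-1}{\bf E}).
\]
Consequently $D+\Omega_\phi$ preserves ${\bf W}_\ast$, and on $Gr^{\bf W}_i$ the contribution of $\Omega_\phi$ vanishes because it lands in ${\bf W}_{i-1}=0$ modulo ${\bf W}_{i-1}$. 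Therefore the identity map of $\bf E$ induces an isomorphism of flat bundles $Gr^{\bf W}_i{\bf E}\cong Gr^{\bf W}_i{\bf E}_{\Omega_\phi}$.

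For the complexified statement, I would note that the isomorphism ${\mathcal I}\colon{\mathcal M}^\ast_\C\to{\mathcal N}^\ast$ is by construction filtration-preserving with respect to $W_\ast$, so ${\mathcal I}(\Omega)$ is again a morphism of mixed Hodge structures on its $\frak{u}_\C$-component and therefore still strictly lowers $W_\ast$ on $V_\C$. Applying $\varphi$ and repeating the previous argument with $V$ replaced by $V_\C$ yields the statements for ${\bf E}_{\Omega_\varphi}$. The only place where a little care is needed, and which I regard as the main (minor) obstacle, is the bookkeeping that identifies the tensor-product filtration $W_\ast((V_\alpha^\ast\otimes V)^T)$ with the induced fiberwise filtration on $\bf E$ so that the inequality $\Omega_\phi\cdot {\bf W}_k\subset A^1(M,{\bf W}_{k-1})$ really follows from the pointwise statement $\Omega(X)\cdot W_k V\subset W_{k-1}V$; once this is set up, everything else is formal.
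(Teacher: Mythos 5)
Your proof is correct and follows essentially the same route as the paper's: both reduce the lemma to the observation that $W_{-1}{\frak u}={\frak u}$ forces $\Omega({\frak u})\subset W_{-1}({\rm End}(V))$, so that $\Omega_{\phi}$ and $\Omega_{\varphi}$ strictly lower ${\bf W}_{\ast}$ and hence contribute nothing on the associated graded pieces. The extra bookkeeping you flag (matching the tensor-product weight filtration on $\bigoplus_{\alpha}(V_{\alpha}^{\ast}\otimes V)^{T}\otimes {\bf E}_{\alpha}$ with $W_{\ast}V$) is handled correctly and is implicit in the paper's shorter argument.
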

\begin{proof}
On ${\bf E}$, the first assertion is obvious.

By the arguments after Definition \ref{mixuu}, we have $\Omega \in ({\mathcal M}^{1}\otimes {\frak n})^{T}$ where ${\frak n}=
W_{-1}({\rm End}(V))$.
By this, we can say that 
$\Omega_{\phi}\wedge {\bf W}_{i}\subset A^{1}(M, {\bf W}_{i-1})$ and $\Omega_{\varphi}\wedge {\bf W}_{i}\subset A^{1}(M, {\bf W}_{i-1})$.
This implies the lemma.
\end{proof}

\begin{proposition}\label{Gritr}{\rm (cf. \cite[Lemma 3.11]{ES})}
\[(D+\Omega_{\varphi})^{1,0}{\bf F}^{r}\subset A^{1,0}(M,{\bf F}^{r-1})
\]
and
\[(D+\Omega_{\varphi})^{0,1}{\bf F}^{r}\subset  A^{0,1}(M,{\bf F}^{r}).
\]
Thus the filtration ${\bf F}^{\ast}$ is a filtration on the holomorphic vector bundle ${\bf E}_{\Omega_{\varphi}}$ and the Griffiths transversality holds.
\end{proposition}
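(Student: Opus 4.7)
The plan is to track Hodge bigradings and filtrations through $\mathcal{I}$ and $\varphi$, then combine the resulting estimate on $\Omega_\varphi$ with the Griffiths transversality for $D$ on each VHS summand ${\bf E}_\alpha$.

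First, since $\Omega\colon{\frak u}\to{\rm End}(V)$ is a morphism of $\R$-mixed Hodge structures, the Maurer--Cartan element $\Omega\in\mathcal{M}^{1}\otimes{\rm End}(V)$ lies in $F^{0}$ of the tensor product Hodge filtration. By Theorem~\ref{Morg}, $F^{p}(\mathcal{M}^{1}_{\C})=\mathcal{I}^{-1}(\bigoplus_{P\ge p}(\mathcal{N}^{1})^{P,Q})$, so the decomposition $\mathcal{I}(\Omega)=\sum_{P,Q}\omega^{P,Q}$ has components $\omega^{P,Q}\in(\mathcal{N}^{1})^{P,Q}\otimes F^{-P}{\rm End}(V)_{\C}$. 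Unwinding the inductive construction of $\varphi$ in Subsection~\ref{dbd}, using that $D^{\prime}$ shifts Hodge bigradings by $(+1,0)$ and $F^{\prime}_{g}$ by $(-1,-1)$, one checks $\varphi(\V^{P,Q})\subset A^{1}(M,\mathcal{O}_{\rho}\otimes\C)^{P,1-P}$; in particular, $\varphi\circ\mathcal{I}$ maps $F^{p}\mathcal{M}^{1}_{\C}$ into $\bigoplus_{P\ge p}A^{1}(M,\mathcal{O}_{\rho}\otimes\C)^{P,1-P}$. Consequently,
\[
\Omega_\varphi=\sum_{P}\Omega_\varphi^{P},\qquad \Omega_\varphi^{P}\in A^{1}(M,\mathcal{O}_\rho\otimes\C)^{P,1-P}\otimes F^{-P}{\rm End}(V)_{\C}.
\]

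Second, by the definition of $A^{1}(M,\mathcal{O}_\rho\otimes\C)^{P,1-P}$ in Subsection~\ref{DGKAA}, namely the sum over indices $(a,b,c,d,U,V)$ with $a+b=1$, $a+c+U=P$, $b+d+V=1-P$ of $A^{a,b}(M,({\bf E}_\alpha^{\ast})^{c,d})\otimes V_\alpha^{U,V}$, I would split each $\Omega_\varphi^{P}$ into its form-$(1,0)$ and form-$(0,1)$ parts. The main task is then to verify that under the identification $(A^{\ast}(M,\mathcal{O}_\rho)\otimes{\rm End}(V))^{T}\cong A^{\ast}(M,{\rm End}({\bf E}))$, acting on ${\bf F}^{r}{\bf E}=\bigoplus_\beta {\bf F}^{r}((V_\beta^{\ast}\otimes V)^{T}\otimes{\bf E}_\beta)$, the combined Hodge shift contributed by the ${\rm End}(V)\cap F^{-P}$ factor (which lowers the $V$-side Hodge index by at least $P$) and by the $\mathcal{O}_\rho$-side of bidegree $(P,1-P)$ (which contributes $P$ to the ${\bf E}_\beta$-side via the $T$-contraction implicit in the isotypic decomposition of $V$) matches so that the form-$(1,0)$ part sends ${\bf F}^{r}$ into $A^{1,0}(M,{\bf F}^{r-1})$ and the form-$(0,1)$ part sends ${\bf F}^{r}$ into $A^{0,1}(M,{\bf F}^{r})$.

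Finally, on each VHS summand ${\bf E}_\alpha$, the flat connection $D_\alpha$ satisfies $(D_\alpha)^{1,0}F^{s}{\bf E}_\alpha\subset A^{1,0}(M,F^{s-1}{\bf E}_\alpha)$ and $(D_\alpha)^{0,1}F^{s}{\bf E}_\alpha\subset A^{0,1}(M,F^{s}{\bf E}_\alpha)$; tensoring with the constant MHS factor $(V_\alpha^{\ast}\otimes V)^{T}$ and summing over $\alpha$ gives the corresponding estimates for $D$ on ${\bf E}$. Adding the $\Omega_\varphi$-estimate yields both inclusions in the proposition. The second assertion, that ${\bf F}^{\ast}$ is a filtration of the holomorphic bundle ${\bf E}_{\Omega_\varphi}$, follows because the holomorphic structure on ${\bf E}_{\Omega_\varphi}$ is determined by $(D+\Omega_\varphi)^{0,1}$, which preserves each ${\bf F}^{r}$. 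The main obstacle I expect is the bookkeeping in the second step: verifying that the $T$-invariant identification is compatible with Hodge bigradings in the claimed way, which essentially reduces to checking that the isotypic decomposition $V=\bigoplus_\gamma V_\gamma\otimes(V_\gamma^{\ast}\otimes V)^{T}$ is a splitting of mixed Hodge structures so that the Hodge filtrations on both sides combine as tensor products.
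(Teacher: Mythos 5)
Your approach is correct and is essentially the paper's: both arguments rest on the two facts that $\mathcal{I}(\Omega)$ has $(\mathcal{N}^{1})^{P,Q}$-components whose $\mathrm{End}(V)$-part lies in $F^{-P}$ (because $\Omega$ is a morphism of $\R$-mixed Hodge structures) and that the inductive construction of $\varphi$ via $D^{\prime}F^{\prime}_{g}$ preserves the first Hodge index $P$, so that $\Omega_{\varphi}$ sits in total Hodge degree $\ge 0$ and the $(1,0)$/$(0,1)$ split gives the shifts $-1$ and $0$. The only cosmetic differences are that the paper first conjugates by a splitting $b\in {\rm Aut}_{1}(V_{\C},W_{\ast})$ so the final bookkeeping can be done with the bigrading $A^{\ast}(M,{\rm End}({\bf E}))^{P,Q}$, and runs the $P$-preservation induction on the components $\omega^{\prime}_{k}$ of the Maurer--Cartan element via the recursion $\varphi(\omega^{\prime}_{k})=-\frac{1}{2}\sum D^{\prime}F^{\prime}_{g}[\varphi(\omega^{\prime}_{i}),\varphi(\omega^{\prime}_{j})]$ rather than on the generators $\V^{P,Q}$; the degree count you defer to your second step is exactly the computation the paper carries out at that point.
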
 
\begin{proof}
It is sufficient to show
\[ (\Omega_{\varphi})^{1,0}\wedge {\bf F}^{r}\subset A^{1,0}(M,{\bf F}^{r-1}) \qquad {\rm  and}  \qquad (\Omega_{\varphi})^{0,1}\wedge{\bf F}^{r}\subset  A^{0,1}(M,{\bf F}^{r}).\]

As in Proposition \ref{spitto}, we have $b_{\alpha}\in {\rm Aut}_{1}((V_{\alpha}^{\ast}\otimes V)^{T},W_{\ast})$ which 
makes the  $\R$-mixed Hodge structure on $(V_{\alpha}^{\ast}\otimes V)^{T}$ $\R$-split.
Take $b=\sum b_{\alpha}\otimes {\rm id}_{V_{\alpha}}\in  {\rm Aut}_{1}(V,W_{\ast})$.
Then $(W_{\ast}, bF^{\ast})$ is an $\R$-split $\R$-mixed Hodge structure on $V$ and $b$ commutes with the $T$-action.
We take the bigrading $V_{\C}=\bigoplus V^{p,q}$ for the $\R$-split  $\R$-mixed Hodge structure  $(W_{\ast}, bF^{\ast})$ on $V$
and the bigrading ${\rm End}(V_{\C})=\bigoplus {\rm End}(V)^{p,q}$ which is induced by $V_{\C}=\bigoplus V^{p,q}$.

Let $\Omega^{\prime}=b{\mathcal I}(\Omega)b^{-1}$ and $\Omega^{\prime}_{\varphi}=\varphi(\Omega^{\prime})$.
By the $\R$-split $\R$-mixed Hodge structures on   $(V_{\alpha}^{\ast}\otimes V)^{T}$ and $\R$-VHSs ${\bf E}_{\alpha}$,
we take the bigrading ${\bf E}_{\C}=\bigoplus {\bf E}^{P,Q}$.
Then our goal is to show
\[ (\Omega^{\prime}_{\varphi})^{1,0}\wedge \left(\bigoplus_{P\ge r} {\bf E}^{P,Q}\right)\subset \bigoplus_{P\ge r-1} A^{1,0}(M,{\bf E}^{P,Q}) \]
and 
\[ (\Omega^{\prime}_{\varphi})^{0,1}\wedge\left( \bigoplus_{P\ge r} {\bf E}^{P,Q}\right)\subset \bigoplus_{P\ge r}  A^{0,1}(M,{\bf E}^{P,Q}).\]
We take the bigrading
\[ A^{\ast}(M, {\rm End} ({\bf E}_{\C}))=\bigoplus A^{\ast}(M, {\rm End} ({\bf E}))^{P,Q}
\]
as in Section \ref{VHSDEC}.
Then it is sufficient to show $\Omega^{\prime}_{\varphi}\in \bigoplus_{P\ge 0} A^{\ast}(M, {\rm End} ({\bf E}))^{P,Q}$.

Write $\Omega^{\prime}=\omega_{1}^{\prime}+\dots+\omega_{l}^{\prime}$ such that $\omega^{\prime}_{k}\in \bigoplus_{P+Q=k}({\mathcal N}^{\ast})^{P,Q}\otimes {\rm End}(V)$.
Then we have $d\omega^{\prime}_{k}=-\frac{1}{2}\sum_{i+j=k}[\omega^{\prime}_{i},\omega^{\prime}_{j}]$.
Thus we obtain \[\varphi(\omega^{\prime}_{k})=-\frac{1}{2}\sum_{i+j=k}D^{\prime}F^{\prime}_{g}[\varphi(\omega^{\prime}_{i}),\varphi(\omega^{\prime}_{j})].\]
Since $D^{\prime}F^{\prime}_{g}: A^{\ast}(M, {\rm End} ({\bf E}))^{P,Q}\to A^{\ast}(M, {\rm End} ({\bf E}))^{P,Q-1}$,
if $\varphi (\omega_{1}^{\prime})\in  \bigoplus_{P\ge 0} A^{\ast}(M, {\rm End} ({\bf E}))^{P,Q}$, then we can say
 $\varphi (\omega_{k}^{\prime})\in  \bigoplus_{P\ge 0} A^{\ast}(M, {\rm End} ({\bf E}))^{P,Q}$ for each $k$ inductively.
Thus it is sufficient to show $\varphi (\omega_{1}^{\prime})\in  \bigoplus_{P\ge 0} A^{\ast}(M, {\rm End} ({\bf E}))^{P,Q}$.

As in  Remark \ref{spbig}, we have 
 \[\Omega^{\prime}\left(\bigoplus_{-P\ge r} (\V^{P,Q})^{\ast} \right)  \left(\bigoplus_{p\ge s} V^{p,q}\right)\subset    \bigoplus_{p\ge r+s} V^{p,q}.
 \]
By this, for any $P,Q$, we have
\[\omega_{P+Q}^{\prime} (\V^{P,Q})=\Omega^{\prime}\left((\V^{P,Q})^{\ast} \right) \subset \bigoplus_{s\ge p-P} (V^{p,q})^{\ast}\otimes V^{s,t}\subset \bigoplus_{p\ge -P} {\rm End}(V)^{p,q}
\]
and hence
\[\omega_{k}^{\prime} \in    \bigoplus_{p+P\ge 0} ({\mathcal N}^{1})^{P,Q} \otimes {\rm End}(V)^{p,q}.
\]
By the construction of $\varphi:{ \mathcal N}^{\ast}\to A^{\ast}(M,{\mathcal O}_{\rho}\otimes \C)$, 
if $P+Q=1$, then $\varphi(\V^{P,Q})\subset A^{\ast}(M,{\mathcal O}_{\rho}\otimes \C)^{P,Q}$
and hence we have 
\[\varphi(\omega_{1}^{\prime})\in \bigoplus_{P\ge 0}A^{\ast}(M, {\rm End} ({\bf E}))^{P,Q}.
\]
Thus the proposition follows.
\end{proof}

\begin{proposition}\label{gau}
There exists a weight preserving gauge transformation $a$ of ${\bf E}_{\C}$ so that:
\begin{itemize}
\item $a(\Omega_{\phi})=\Omega_{\varphi}$ where $a(\Omega_{\phi})=a^{-1}Da+a^{-1}\Omega_{\phi}a$.
\item $a$ induces the identity map on each $Gr_{i}^{\bf W}{\bf E}$.

\end{itemize}
Moreover, such transformation $a$ can be determined by $H(\Omega)\in  A^{\ast}(M,{\mathcal O}_{\rho}\otimes \C)\otimes [t,dt]$ where $H:{\mathcal M}^{\ast}_{\C}\to A^{\ast}(M,{\mathcal O}_{\rho}\otimes \C)\otimes [t,dt]$ is the  $T$-equivariant homotopy $H:{\mathcal M}^{\ast}_{\C}\to A^{\ast}(M,{\mathcal O}_{\rho}\otimes \C)\otimes [t,dt]$ as in Subsection \ref{isho}.
We call this $a$ canonical.
\end{proposition}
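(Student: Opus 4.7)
The plan is to interpret $H(\Omega)$ as a flat connection on the bundle over the cylinder $M \times [0,1]$ and to obtain $a$ as the parallel-transport gauge transformation along the interval direction. First, I would decompose
\[
H(\Omega) = \alpha(t) + \beta(t)\, dt
\]
with $\alpha(t) \in A^{1}(M, {\mathcal O}_{\rho} \otimes \C) \otimes {\rm End}(V) \otimes \C[t]$ and $\beta(t) \in A^{0}(M, {\mathcal O}_{\rho} \otimes \C) \otimes {\rm End}(V) \otimes \C[t]$. Because $H$ is a DGA morphism and $\Omega$ is Maurer--Cartan, $H(\Omega)$ is Maurer--Cartan in $A^{\ast}(M, {\mathcal O}_{\rho} \otimes \C) \otimes [t, dt] \otimes {\rm End}(V)$; splitting by the coefficient of $dt$ yields
\[
D\alpha(t) + \tfrac{1}{2}[\alpha(t), \alpha(t)] = 0 \qquad \text{and} \qquad \partial_{t} \alpha(t) = D\beta(t) + [\alpha(t), \beta(t)].
\]
The boundary values $\alpha(0) = \varphi({\mathcal I}(\Omega)) = \Omega_{\varphi}$ and $\alpha(1) = \phi(\Omega) = \Omega_{\phi}$ are immediate from the endpoint behaviour of $H$ as constructed in Subsection \ref{isho}.

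Next, I would define $a(t) \in A^{0}(M, {\mathcal O}_{\rho} \otimes \C) \otimes {\rm End}(V)$ as the unique solution of the ordinary differential equation $\partial_{t} a(t) + \beta(t) a(t) = 0$ with $a(0) = 1$. A direct computation using the second Maurer--Cartan component together with the Leibniz rule shows that $\partial_{t}\bigl(a(t)^{-1} D a(t) + a(t)^{-1} \alpha(t) a(t)\bigr) = 0$, so this expression equals its value at $t=0$, namely $\Omega_{\varphi}$. Setting $a := a(1)$ therefore yields $a^{-1} D a + a^{-1} \Omega_{\phi} a = \Omega_{\varphi}$, which is the required gauge identity.

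For the filtration and $T$-equivariance properties of $a$: the hypothesis that $\Omega$ is a mixed Hodge representation forces $\Omega({\frak u}) \subset {\frak n} = W_{-1}({\rm End}(V))$ as observed after Definition \ref{mixuu}, and since $H$ is a DGA morphism, $\beta(t)$ also takes values in ${\frak n}$. Because ${\frak n}$ is nilpotent (the weight filtration on $V$ being finite), the ODE is solved by a finite time-ordered exponential $a(t) = {\rm Texp}\bigl(-\int_{0}^{t} \beta(s)\, ds\bigr)$, which lies in $1 + W_{-1}({\rm End}({\bf E}_{\C}))$; hence $a$ preserves ${\bf W}_{\ast}$ and induces the identity on each $Gr_{i}^{{\bf W}} {\bf E}$. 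Since $H$ and $\Omega$ are both $T$-equivariant, so are $\alpha$ and $\beta$; consequently $a(t)$ is $T$-invariant and descends to a genuine gauge transformation of the bundle ${\bf E}_{\C} = \bigoplus_{\alpha} (V_{\alpha}^{\ast} \otimes V)^{T} \otimes {\bf E}_{\alpha}$.

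The main technical point will be the explicit verification that $a(t)^{-1} D a(t) + a(t)^{-1} \alpha(t) a(t)$ is $t$-independent; this is a routine but sign-sensitive calculation relying on the two Maurer--Cartan components of $H(\Omega)$, and it is essentially the standard statement that parallel transport along the interval factor of a flat connection on $M \times [0,1]$ realises a gauge equivalence between the two endpoint connections.
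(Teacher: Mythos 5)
Your proof is correct, and it rests on the same mechanism as the paper's: extract the $dt$-component $\beta(t)$ of the Maurer--Cartan element $H(\Omega)=\alpha(t)+\beta(t)\,dt$ and integrate it in the $t$-direction to produce the gauge transformation. Your endpoint identification $\alpha(0)=\Omega_{\varphi}$, $\alpha(1)=\Omega_{\phi}$ matches the explicit formula for $H$ in Subsection \ref{isho}, your verification that $a(t)^{-1}Da(t)+a(t)^{-1}\alpha(t)a(t)$ is constant in $t$ is a valid computation given the two components of the Maurer--Cartan equation, and your use of $\Omega(\frak u)\subset{\frak n}=W_{-1}({\rm End}(V))$ together with $T$-equivariance of $H$ correctly yields the weight and descent properties.

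Where you differ from the paper is in how the explicit gauge transformation is produced. The paper first invokes Manetti's theorem that homotopy equivalence of Maurer--Cartan elements in a nilpotent DGLA implies gauge equivalence, applied to $(A^{\ast}(M,{\mathcal O}_{\rho}\otimes \C)\otimes \frak n_{\C})^{T}$, and then makes the gauge element explicit by solving the Baker--Campbell--Hausdorff-type equation $A^{\prime}(t)+\gamma^{A}(t)=\beta(t)$ for the \emph{logarithm} $A(t)$, integrating degree by degree along the weight decomposition ${\frak n}=\bigoplus{\frak n}_{-i}$. You instead solve the linear ODE $\partial_{t}a(t)+\beta(t)a(t)=0$ for the group element itself and check directly that it conjugates the endpoints; this amounts to reproving the relevant direction of Manetti's equivalence rather than citing it. Your route is more self-contained and the verification is a one-line Leibniz computation, at the cost of not exhibiting the logarithm $A=\log a$ explicitly (which the paper's weight-graded recursion does, and which is what makes $a=\exp(A)$ visibly of the form ${\rm Id}+W_{-1}$); in your version that property is recovered from nilpotency of $\frak n$ and the termination of the time-ordered exponential, which is an equally valid argument.
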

\begin{proof}
In \cite[Section 5]{Man}, it is shown that on a nilpotent DGLA (differential graded Lie algebra) $L^{\ast}$, for two Maurer-Cartan elements $x,y\in L^{\ast}$, the following two conditions are equivalent:
\begin{itemize}
\item $x$ and $y$ are homotopy equivalent  i.e. there exists Maurer-Cartan element $x(t)\in  L^{\ast}\otimes[t,dt]$ so that $x(0)=x$ and $x(1)=y$.
\item $x$ and $y$ are gauge equivalent i.e. there exists $A\in L^{0}$ so that $y=\exp(A) \ast x$ (see Subsection \ref{DEFLA}).
\end{itemize}

Consider the DGLA $(A^{\ast}(M,{\mathcal O}_{\rho}\otimes \C)\otimes  \frak n_{\C})^{T}$.
We can say the  homotopy equivalence of $\Omega_{\phi}$ and  $\Omega_{\varphi}$ by $H(\Omega)$.
Hence we obtain $A\in (A^{0}(M,{\mathcal O}_{\rho}\otimes \C)\otimes  \frak n_{\C})^{T}$ so that $\exp(A)\ast (\Omega_{\phi})=\Omega_{\varphi}$.
By ${\frak n}=
W_{-1}({\rm End}(V))$, regarding  $A\in {\rm End}({\bf E}_{\C})$, we have $A({\bf W}_{i})\subset {\bf W}_{i-1}$.
Thus $a=\exp(A)$ is a desired gauge transformation.

We will find more explicit $A$.
Let $H(\Omega)=\alpha(t)+\beta(t)dt$.
We use the techniques in  \cite[Lemma 5.6, Proposition 5.7 and the proof of Theorem 5.5]{Man}.
We will find $A(t)\in (A^{0}(M,{\mathcal O}_{\rho}\otimes \C)\otimes  \frak n_{\C})^{T}\otimes [t,dt]$
such that $\alpha(t)=\exp(A(t))(\Omega_{\phi})$.
As  explained around  \cite[Proposition 5.7]{Man}, it is sufficient to solve the differential equation
\[A^{\prime}(t)+\gamma^{A}(t)=\beta(t)
\]
with $A(0)=0$ for certain $\gamma^{A}(t) \in(A^{0}(M,{\mathcal O}_{\rho}\otimes \C)\otimes  \frak n_{\C})^{T}\otimes [t,dt]$.
We will explain $\gamma^{A}(t) $ in  detail.
By using  the Baker-Campbell-Hausdorff formula $\exp (X)\exp(Y)=\exp(X+Y+\frac{1}{2}[X,Y]+\frac{1}{12}([X,[X,Y]]+[Y,[Y,X]])\dots)$ and the Taylor expansion $A(t+h)=A(t)+A^{\prime}(t)h+\dots$, we have
\[\exp\left(A(t+h)\right)\exp\left(-A(t)\right)=\exp\left((A^{\prime}(t)+\gamma^{A}(t))h+\delta(t,h)h^{2}\right).
\]

Since the filtration $W_{\ast}$ on $V$ is a filtration of $T$-module and $T$ is reductive, we can take a decomposition $V=\bigoplus_{i} V_{i}$ of $T$-modules so that  $\bigoplus_{i\le k}V_{i}=W_{k}((V))$.
For ${\frak n}=
W_{-1}({\rm End}(V))$,   this decomposition induces the decomposition ${\frak n}=\bigoplus_{i>0}{\frak n}_{-i}$ of $T$-modules such that $\bigoplus_{i\ge k}{\frak n}_{-i}=W_{-k}({\rm End}(V))$ and $[{\frak n}_{-i},{\frak n}_{-j}]\subset {\frak n}_{-i-j}$.
We write $A(t)=\sum_{i}A_{-i}(t)$, $\gamma^{A}(t) =\sum_{i}\gamma^{A}_{-i}(t)$ and $\beta(t)=\sum_{i}\beta_{-i}(t)$ associated with this decomposition.
Then each $\gamma^{A}_{-i}(t)$ is a linear combination of iterated products of $A_{-j}(t)$ and $A^{\prime}_{-j}(t)$ with $j<i$.
Thus, inductively, we can solve $A(t)=\sum_{i}A_{-i}(t)$ as
\[A_{-i}(t)=\int_{0}^{t}\left(\beta_{-i}(t)-\gamma^{A}_{-i}(t)\right).
\]
\end{proof}

\subsection{Main construction}
For a  mixed Hodge $(T,\frak u)$-representation
${\frak V}=(V,W_{\ast}, F^{\ast}, \Omega)$, we construct an $\R$-VMHS $({\bf E}_{\frak V}, {\bf W}_{\frak V \ast},{\bf F}^{\ast}_{\frak V})$.
We take:
\begin{itemize}
\item ${\bf E}_{\frak V}={\bf E}_{\Omega_{\phi}}$.
\item ${\bf W}_{\frak V\ast}$ is the increasing filtration ${\bf W}$ on the $\C^{\infty}$-vector bundle ${\bf E}$.
\item  ${\bf F}_{\frak V}^{\ast}=a {\bf F}^{\ast}$ where $a$ is the canonical weight preserving gauge transformation as in Proposition \ref{gau}.
\end{itemize}

\begin{theorem}
 $({\bf E}_{\frak V}, {\bf W}_{\frak V \ast},{\bf F}^{\ast}_{\frak V})$ is an $\R$-VMHS.
\end{theorem}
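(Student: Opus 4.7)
The plan is to verify the five axioms of an $\R$-VMHS in turn, using the gauge transformation $a$ to transfer the holomorphic and transversality information proved for the pair $({\bf E}_{\Omega_{\varphi}},{\bf F}^{\ast})$ in Proposition \ref{Gritr} over to the pair $({\bf E}_{\frak V},{\bf F}^{\ast}_{\frak V})=({\bf E}_{\Omega_{\phi}},a{\bf F}^{\ast})$, and to use Lemma \ref{WETTT} together with the decomposition ${\bf E}=\bigoplus_{\alpha}(V_{\alpha}^{\ast}\otimes V)^{T}\otimes {\bf E}_{\alpha}$ to identify $Gr^{\bf W}_{k}$ with an honest $\R$-VHS of weight $k$.

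First, axioms (1) and (2) are essentially free: ${\bf E}_{\frak V}$ is a local system by construction, and Lemma \ref{WETTT} says that ${\bf W}_{{\frak V}\ast}$ is a filtration by flat subbundles. For axioms (3) and (4), I would exploit the fact that $a$ realises an isomorphism of flat bundles $a:{\bf E}_{\Omega_{\varphi}}\to {\bf E}_{\Omega_{\phi}\,\C}$, since $a^{-1}(D+\Omega_{\phi})a=D+a^{-1}Da+a^{-1}\Omega_{\phi}a=D+\Omega_{\varphi}$ by Proposition \ref{gau}. Under this isomorphism, the holomorphic structure on ${\bf E}_{\frak V\,\C}$ pulls back to that of ${\bf E}_{\Omega_{\varphi}}$, namely to the $(0,1)$-part of $D+\Omega_{\varphi}$, and the filtration $a{\bf F}^{\ast}={\bf F}^{\ast}_{\frak V}$ on ${\bf E}_{\frak V\,\C}$ corresponds to ${\bf F}^{\ast}$ on ${\bf E}_{\Omega_{\varphi}}$. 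Hence the statement $(D+\Omega_{\varphi})^{0,1}{\bf F}^{r}\subset A^{0,1}(M,{\bf F}^{r})$ of Proposition \ref{Gritr} tells us that ${\bf F}^{\ast}_{\frak V}$ is holomorphic, and the complementary statement $(D+\Omega_{\varphi})^{1,0}{\bf F}^{r}\subset A^{1,0}(M,{\bf F}^{r-1})$ together with holomorphy yields the Griffiths transversality $(D+\Omega_{\phi}){\bf F}^{r}_{\frak V}\subset A^{1}(M,{\bf F}^{r-1}_{\frak V})$.

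For axiom (5), I would first note that by Lemma \ref{WETTT} the identity induces isomorphisms of flat bundles $Gr^{\bf W}_{k}{\bf E}\cong Gr^{\bf W}_{k}{\bf E}_{\frak V}$ and $Gr^{\bf W}_{k}{\bf E}_{\C}\cong Gr^{\bf W}_{k}{\bf E}_{\Omega_{\varphi}}$, and that by the second clause of Proposition \ref{gau}, $a$ induces the identity on $Gr^{\bf W}_{k}{\bf E}$. Consequently the filtration ${\bf F}^{\ast}_{\frak V}$ induces on $Gr^{\bf W}_{k}{\bf E}_{\C}$ the same filtration as ${\bf F}^{\ast}$ does. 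So it suffices to check that for each $k$ the graded piece
\[Gr^{\bf W}_{k}{\bf E}=\bigoplus_{\alpha}\bigoplus_{i+w_{\alpha}=k}Gr^{W}_{i}\bigl((V_{\alpha}^{\ast}\otimes V)^{T}\bigr)\otimes {\bf E}_{\alpha},\]
where $w_{\alpha}$ is the weight of the $\R$-VHS ${\bf E}_{\alpha}$ constructed in Subsection \ref{AUre}, is an $\R$-VHS of weight $k$ with the induced filtration. Each summand is the tensor product of a pure $\R$-Hodge structure of weight $i$ on $Gr^{W}_{i}(V_{\alpha}^{\ast}\otimes V)^{T}$ (Lemma \ref{irdho} gives an $\R$-MHS whose graded pieces are pure) with a polarizable $\R$-VHS of weight $w_{\alpha}=k-i$, so it is itself a polarizable $\R$-VHS of weight $k$, and these assemble to an $\R$-VHS of weight $k$.

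The main obstacle I anticipate is a bookkeeping one rather than a conceptual one: making sure that the two filtrations on $Gr^{\bf W}_{k}{\bf E}$---the one coming from restricting ${\bf F}^{\ast}$ and the one coming from the tensor-product VHS structure on $\bigoplus_{\alpha}Gr^{W}_{k-w_{\alpha}}(V_{\alpha}^{\ast}\otimes V)^{T}\otimes {\bf E}_{\alpha}$---genuinely coincide. This requires chasing the definitions of ${\bf F}^{\ast}$ through the identification of $A^{\ast}(M,{\mathcal O}_{\rho})\otimes{\rm End}(V))^{T}$ with $A^{\ast}(M,{\rm End}({\bf E}))$ and checking that the bigrading used in Subsection \ref{VHSDEC} for an $\R$-VHS is compatible, on each irreducible summand, with the fibrewise $U(1)$-action induced by the MHS splittings of $(V_{\alpha}^{\ast}\otimes V)^{T}$. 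Granted this identification, no further work is needed and the theorem follows.
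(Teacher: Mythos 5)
Your proposal is correct and follows essentially the same route as the paper: Lemma \ref{WETTT} for the weight filtration, Proposition \ref{Gritr} transported through the gauge transformation $a$ of Proposition \ref{gau} for holomorphicity and Griffiths transversality, and the identification of $Gr^{\bf W}_{k}({\bf E}_{\frak V})$ with $Gr^{\bf W}_{k}({\bf E})$ carrying the VHS of the ``essentially trivial'' $\R$-VMHS $\bigoplus_{\alpha}(V_{\alpha}^{\ast}\otimes V)^{T}\otimes{\bf E}_{\alpha}$ of Example \ref{triVMHSS}. The bookkeeping issue you flag at the end is vacuous here, since ${\bf F}^{\ast}$ on ${\bf E}_{\C}$ is \emph{defined} as the filtration induced by the Hodge filtrations on $(V_{\alpha}^{\ast}\otimes V)^{T}$ and on ${\bf E}_{\alpha}$, so the two filtrations you want to compare coincide by construction.
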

\begin{proof}
By Lemma \ref{WETTT}, ${\bf W}_{\frak V \ast}$ is a filtration of the local system ${\bf E}_{\frak V}$.
By Proposition \ref{Gritr}, ${\bf F}^{\ast}_{\frak V}$ is a filtration of the holomorphic vector bundle ${\bf E}_{\frak V}\otimes {\mathcal O}_{M}$ and the Griffiths transversality holds.
We show that $Gr^{\bf W}_{k}({\bf E}_{\frak V})$ with the filtration induced by $ {\bf F}^{\ast}_{\frak V}$ is an $\R$-VHS of weight $k$.
We notice that $({\bf E},{\bf W}_{\ast}, {\bf F}^{\ast})$ is an $\R$-VMHS as in Example \ref{triVMHSS}.
By Lemma  \ref{WETTT}, as a local system, we have $Gr^{\bf W}_{k}({\bf E}_{\frak V})=Gr^{\bf W}_{k}({\bf E})$.
By Proposition \ref{gau},
${\bf F}_{\frak V}^{\ast}=a {\bf F}^{\ast}$ induces $\R$-VHS on $Gr^{\bf W}_{k}({\bf E}_{\frak V})$.
Hence the theorem follows.
\end{proof}

\begin{remark}
Our $\R$-VMHS $({\bf E}_{\frak V}, {\bf W}_{\frak V \ast},{\bf F}^{\ast}_{\frak V})$ is not necessarily graded-polarizable.
Since each ${\bf E}_{\alpha}$ is polarized $\R$-VHS, by the construction, if the mixed Hodge structure on $(V_{\alpha}^{\ast}\otimes V)^{T}$ is graded-polarizable for every $\alpha$, then the $\R$-VMHS $({\bf E}_{\frak V}, {\bf W}_{\frak V \ast},{\bf F}^{\ast}_{\frak V})$ is graded-polarizable.
\end{remark}

For a simple case, we explain a way of describing a gauge transformation $a$  as in Proposition \ref{gau} explicitly.

\begin{example}\label{len22}
We assume that  the weight filtration $W_{\ast}$ on $V$ is of length $2$ i.e. for some $k$, $W_{k-2}(V)=0$ and $W_{k}(V)=V$.
Then, by $W_{-3}({\rm End}(V))=0$, ${\frak n}$ is $2$-step i.e. $[\frak n,[\frak n,\frak n]]=0$ and we have $\Omega(W_{-3}({\frak u}))=0$.
By this, we have $\Omega=\omega_{1}+\omega_{2}$ such that $\omega_{1}\in \V_{1}\otimes {\frak n}$ and $\omega_{2}\in \V_{2}\otimes W_{-2}({\rm End}(V))$.
By the Maurer-Cartan equation $d\Omega+\frac{1}{2}[\Omega,\Omega]=0$, we have  $d\omega_{2}+\frac{1}{2}[\omega_{1},\omega_{1}]=0$.
By constructions of the maps $\phi$ and $\varphi$, we have $\Omega_{\phi}=\omega_{1}+D^{c}A$ and $\Omega_{\varphi}=\omega_{1}-2\sqrt{-1}D^{\prime}A$ where $A=-\frac{1}{2}F_{g}[\omega_{1},\omega_{1}]$.
In this case, a gauge transformation as in Proposition \ref{gau} is $a=\exp(\sqrt{-1}A)$.
\end{example}

\begin{remark}
Replace $A^{\ast}(M,{\mathcal O}_{\rho})$, $\left(A^{\ast}(M,{\mathcal O}_{\rho}\otimes \C)^{P,Q},D^{\prime}, D^{\prime\prime}\right)$ and $T$ by the usual de Rham complex $A^{\ast}(M)$, usual Dolbeault complex $(A^{\ast,\ast}(M),\partial, \bar\partial)$ and the trivial group respectively.
By the arguments as in Section \ref{1-mKal} and \ref{MAIVM},
we obtain Theorem (Prototype).
In this case, each $\R$-VHS on $Gr^{\bf W}_{k}({\bf E}_{\frak V})$ is constant and so the $\R$-VMHS $({\bf E}_{\frak V}, {\bf W}_{\frak V \ast},{\bf F}^{\ast}_{\frak V})$ is unipotent in the sense of Hain-Zucker as in \cite{HZ}.
In this case, we can avoid the argument \ref{DGKAA} and so we do not need a base point.
\end{remark}

\begin{remark}
In \cite{Pri},
Hain-Zucker's construction is extended by very interesting but complicated techniques.
\end{remark}

\subsection{Functoriality}
Let $(M_{1},g_{1})$ and $(M_{2},g_{2})$ be compact K\"ahler manifolds
and  $f:M_{2}\to M_{1}$ a holomorphic map satisfying the condition (V) as in Subsection \ref{Funcmx}.
We consider the same situation as in Subsection \ref{Funcmx}.
Let $\,_{1}{\frak V}=(V,W_{\ast}, F^{\ast}, \Omega)$ be a  mixed Hodge $(T,\,_{1}\frak u)$-representation where $\,_{1}\frak u$ (resp. $\,_{2}\frak u$)  is the pro-nilpotent Lie algebra as in Subsection \ref{proni} for $\,_{1}{\mathcal M}^{\ast}$ (resp. $\,_{2}{\mathcal M}^{\ast}$).
Now we obtain the $\R$-VMHS $({\bf E}_{\,_{1}\frak V}, {\bf W}_{\,_{1}\frak V \ast},{\bf F}^{\ast}_{\,_{1}\frak V})$ by the above way.
Since $f$ is holomorphic, we have the pull-back $\R$-VMHS $(f^{\ast}{\bf E}_{\,_{1}\frak V}, f^{\ast}{\bf W}_{\,_{1}\frak V \ast},f^{\ast}{\bf F}^{\ast}_{\,_{1}\frak V})$.
Otherwise, by the result in Subsection \ref{Funcmx}, $\,_{2}{\frak V}=(V,W_{\ast}, F^{\ast}, f_{\mathcal M}(\Omega))$ is a mixed Hodge $(T,\,_{2}\frak u)$-representation and hence we obtain the $\R$-VMHS $({\bf E}_{\,_{2}\frak V}, {\bf W}_{\,_{2}\frak V \ast},{\bf F}^{\ast}_{\,_{2}\frak V})$ by the above way.
\begin{proposition}
\[(f^{\ast}{\bf E}_{\,_{1}\frak V}, f^{\ast}{\bf W}_{\,_{1}\frak V \ast},f^{\ast}{\bf F}^{\ast}_{\,_{1}\frak V})=({\bf E}_{\,_{2}\frak V}, {\bf W}_{\,_{2}\frak V \ast},{\bf F}^{\ast}_{\,_{2}\frak V}).\]

\end{proposition}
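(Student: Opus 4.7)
The plan is to match the three ingredients of the VMHS (underlying flat bundle, weight filtration, Hodge filtration) one at a time, using the functoriality results of Subsection \ref{Funcmx} together with the explicit formula for the canonical gauge transformation $a$ in Proposition \ref{gau}.

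First, I would identify the underlying flat bundles. The bundle $\mathbf{E}_{_1\mathfrak{V}}$ is defined as $\bigoplus_\alpha (V_\alpha^\ast\otimes V)^T\otimes \mathbf{E}_\alpha$ on $M_1$, equipped with the flat connection $D+{_1\Omega_\phi}$ where ${_1\Omega_\phi}={_1\phi}(\Omega)$. Its pullback by $f$ is the analogous bundle on $M_2$ (built from $f^\ast\mathbf{E}_\alpha$) with the pulled-back connection $D+f^\ast({_1\phi}(\Omega))$. By the commutation $f^\ast\circ{_1\phi}={_2\phi}\circ f_{\mathcal M}$ established in Subsection \ref{Funcmx}, this equals $D+{_2\phi}(f_{\mathcal M}(\Omega))={_2\Omega_\phi}$, which is precisely the connection defining $\mathbf{E}_{_2\mathfrak{V}}$. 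The weight filtration is immediate: both $\mathbf{W}_{_1\mathfrak{V}\ast}$ and $\mathbf{W}_{_2\mathfrak{V}\ast}$ are induced from the weight filtration on $V$ together with the weights of the $\R$-VHSs $\mathbf{E}_\alpha$, and pullback of an $\R$-VHS preserves its weights.

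The essential point is the Hodge filtration. We have $\mathbf{F}^\ast_{_i\mathfrak{V}}={_ia}\,\mathbf{F}^\ast$ where $_ia$ is the canonical weight-preserving gauge transformation of Proposition \ref{gau}. The proof of that proposition expresses ${_ia}=\exp({_iA})$, where ${_iA}$ is obtained from $_iH(\Omega_i)=\alpha_i(t)+\beta_i(t)dt$ by solving the recursion ${_iA}_{-k}(t)=\int_0^t(\beta_{i,-k}(s)-\gamma^{{_iA}}_{-k}(s))\,ds$, with $\gamma^{{_iA}}$ a universal iterated-bracket expression in the components of $_iA$ and their derivatives. By Condition (V) and Proposition \ref{Hcommu} we have $_2H\circ f_{\mathcal M}=(f^\ast\otimes{\rm id})\circ{_1H}$, so $_2H(f_{\mathcal M}(\Omega))=f^\ast\alpha_1(t)+f^\ast\beta_1(t)dt$. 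Since $f^\ast$ is a DGA homomorphism (in particular it commutes with the Lie bracket on $\mathfrak n_{\mathbb C}$-valued forms) and obviously commutes with the $t$-integration, an easy induction on $k$ on the grading $\mathfrak n=\bigoplus_{k>0}\mathfrak n_{-k}$ shows ${_2A}_{-k}=f^\ast({_1A}_{-k})$, hence ${_2a}=\exp({_2A})=f^\ast({_1a})$.

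Consequently $\mathbf{F}^\ast_{_2\mathfrak{V}}={_2a}\,\mathbf{F}^\ast=f^\ast({_1a})\,\mathbf{F}^\ast=f^\ast({_1a}\,\mathbf{F}^\ast)=f^\ast\mathbf{F}^\ast_{_1\mathfrak{V}}$, where in the second-to-last step we use that $\mathbf{F}^\ast$ is itself pulled back from $M_1$ (being built tautologically from the Hodge filtration on $V$ together with $\mathbf{F}^\ast_\alpha$, both of which pull back correctly). The main obstacle, and really the only nontrivial step, is the induction showing ${_2A}=f^\ast({_1A})$; this rests on the fact that the Baker--Campbell--Hausdorff rearrangement producing $\gamma^A$ involves only brackets and ordinary algebraic operations, all of which commute with the pullback homomorphism $f^\ast$.
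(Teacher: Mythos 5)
Your proposal is correct and follows essentially the same route as the paper: reduce the flat bundle and weight filtration to the identity $f^{\ast}\circ\,_{1}\phi=\,_{2}\phi\circ f_{\mathcal M}$, then reduce the Hodge filtration to $\,_{2}a=f^{\ast}(\,_{1}a)$, which is proved by pulling back the homotopy via Proposition \ref{Hcommu} and solving the same differential equation $A^{\prime}(t)+\gamma^{A}(t)=\beta(t)$ on both sides. Your explicit induction on the grading $\frak n=\bigoplus_{k>0}\frak n_{-k}$ just spells out the step the paper summarizes as ``$\,_{2}\beta(t)=f^{\ast}\otimes{\rm id}(\,_{1}\beta(t))$ implies $\,_{2}A(t)=f^{\ast}\otimes{\rm id}(\,_{1}A(t))$.''
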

\begin{proof}
Now we have $\phi_{2}(f_{\mathcal M}(\Omega))=f^{\ast}(\phi_{1}(\Omega))$ (see Subsection \ref{Funcmx}).
Thus we have $(f^{\ast}{\bf E}_{\,_{1}\frak V}, f^{\ast}{\bf W}_{\,_{1}\frak V \ast})=({\bf E}_{\,_{2}\frak V}, {\bf W}_{\,_{2}\frak V \ast})$. Now we write ${\bf F}_{\,_{1}\frak V}^{\ast}=\,_{1}a \,_{1}{\bf F}^{\ast}$ and ${\bf F}_{\,_{2}\frak V}^{\ast}=\,_{2}a \,_{2}{\bf F}^{\ast}$ as above. Then we can easily check that $\,_{2}{\bf F}^{\ast}=f^{\ast}\,_{1}{\bf F}^{\ast}$ by the construction.
Thus it is sufficient to show  $\,_{2}a =f^{\ast} (\,_{1}a) $.
We write $\,_{1}a=e^{\,_{1}A}$ and $\,_{2}a=e^{\,_{2}A}$ as in the proof of  Proposition \ref{gau}.
We prove $\,_{2}A=f^{\ast}(\,_{1}A)$.
Write $\,_{1}H(\Omega)=\,_{1}\alpha(t)+\,_{1}\beta(t)dt$ and  $\,_{2}H(f_{\mathcal M}(\Omega))=\,_{2}\alpha(t)+\,_{2}\beta(t)dt$ .
Then, $\,_{1}A=\,_{1}A(1)$ and $\,_{2}A=\,_{2}A(1)$ for the solutions $\,_{1}A(t)$ and $\,_{2}A(t)$ of  the differential equations
\[\,_{1}A^{\prime}(t)+\gamma^{\,_{1}A}(t)=\,_{1}\beta(t)
\]
and 
\[\,_{2}A^{\prime}(t)+\gamma^{\,_{2}A}(t)=\,_{2}\beta(t)
\]
with $\,_{1}A(0)=0$ and  $\,_{2}A(0)=0$ respectively (see the proof  of  Proposition \ref{gau}).
By the result of Subsection \ref{Funcmx}, we have $\,_{2}H(f_{\mathcal M}(\Omega))=f^{\ast}\otimes {\rm id}_{ [t,dt]} (\,_{1}H(\Omega))$.
Thus we have $\,_{2}\beta(t)=f^{\ast}\otimes {\rm id}_{ [t,dt]} (\,_{1}\beta(t))$ and this implies  $\,_{2}A(t)=f^{\ast}\otimes {\rm id}_{ [t,dt]}(\,_{1}A(t))$.
Hence we obtain $\,_{2}A=f^{\ast}(\,_{1}A)$.
\end{proof}

\section{Constructing VMHSs by substructures of $1$-minimal models}\label{SUBSMMM}
\subsection{Representations of nilpotent Lie algebras}\label{nilf}
The reference of this subsection is \cite{Re}.

Let $\frak n$ be a nilpotent Lie algebra and ${\frak n}={\frak n}_{1}\supset {\frak n}_{2}\supset {\frak n}_{3}\dots$ the lower central series of $\frak n$ (i.e. $[\frak n,\frak n_{i}]=\frak n_{i+1}$).
It is known that $[{\frak n}_{i},{\frak n}_{j} ]\subset {\frak n}_{i+j}$.
We say that $\frak n$ is $k$-step if ${\frak n}_{k}\not=0$ and ${\frak n}_{k+1}=0$.
Define the increasing filtration $W_{\ast}$ of ${\frak n}$ so that $W_{-k}={\frak n}_k$ for $k>0$.

Let $U(\frak n)$ be the universal enveloping algebra.
That is $U({\frak n})=T({\frak n})/I$ where $T({\frak n})$ is the tensor algebra of $\frak n$ and $I$ is the ideal which is generated by \[\{X\otimes Y-Y\otimes X-[X,Y]\vert X,Y\in \frak n\}.\]
Then  we have the natural increasing filtration $W_{\ast}$ of $U(\frak n)$ induced by the above increasing filtration  of ${\frak n}$.
This filtration is compatible with the multiplication of $U(\frak n)$.
We suppose $\frak n$ is $k$-step.
Let $J=W_{-k}(U(\frak n))$.
Then $J$  is an ideal and the map ${\frak n}\to U(\frak n)/J$ induced by the natural inclusion ${\frak n}\hookrightarrow U(\frak n)$ is an injection.
Thus, for the endomorphisms ${\rm End}(U(\frak n)/J)$ of the finite-dimensional vector space $U(\frak n)/J$, 
 we have a finite-dimensional faithful representation $\tau:{\frak n}\to {\rm End}(U(\frak n)/J)$.
For any $x\in \frak n$ and any integer $i$, \[\tau(x) (W_{-i}(U(\frak n)/J))\subset W_{-i-1}(U(\frak n)/J)\] and so $\tau$ is a nilpotent representation.

\subsection{Sub-structures of $1$-minimal models}\label{subs}
\begin{definition}\label{subst}
A {\em $k$-step sub-structure}  of ${\mathcal M}^{\ast}$ is a sub-vector space ${\mathcal  X}\subset {\mathcal M}^{1}$ so that\begin{itemize}
\item ${\mathcal X}={\mathcal X}_{1}\oplus\dots\oplus{\mathcal  X}_{k}$ such that for each $1\le i\le k$,  ${\mathcal X}_{i}\not=0$ and
${\mathcal X}_{i}\subset \V_{i}$.
\item  $d:{\mathcal X}_{r}\to \sum_{i+j=r} {\mathcal X}_{i}\wedge {\mathcal  X}_{j}$  (thus $\bigwedge {\mathcal X}$ is a sub-DGA of ${\mathcal M}^{\ast}$).
\item $\mathcal X$  is an $\R$-mixed Hodge substructure of ${\mathcal M}^{1}$.
\item $\mathcal X$ is a $T$-submodule of ${\mathcal M}^{1}$.

\end{itemize}

\end{definition}

We assume that a $k$-step sub-structure $\mathcal X$ is finite-dimensional.
Consider the nilpotent Lie algebra ${\mathcal X}^{\ast}$ which is dual to the DGA $\bigwedge {\mathcal X}$.
Take the dual $\R$-mixed Hodge structure $(W_{\ast},F^{\ast})$ on ${\mathcal X}^{\ast}$.
Then, for  $\X^{\ast}=\X^{\ast}_{1}\oplus\dots\oplus \X^{\ast}_{k}$, the bracket on $\X^{\ast}$ is dual to the differential $d:\X_{r}\to \sum_{i+j=r} \X_{i}\wedge \X_{j}$.
Hence, we can easily  check that  $\X^{\ast}=\X^{\ast}_{1}\oplus\dots\oplus \X^{\ast}_{k}$ is a graded nilpotent Lie algebra so that the filtration $W_{-n}(\X^{\ast})=\bigoplus_{i\ge n} \X^{\ast}_{i}$ is the natural filtration which is given by the lower central series of $\X^{\ast}$.

Consider the universal enveloping algebra $U(\X^{\ast})$ of the nilpotent Lie algebra $\X^{\ast}$.
Then, we obtain the $\R$-mixed Hodge structure $(W_{\ast},F^{\ast})$ on $U(\X^{\ast})$ which is induced by the $\R$-mixed Hodge structure on $\X^{\ast}$.
By the above argument, the weight  filtration $W_{\ast}$ is the natural filtration induced by  the lower central series of $\X^{\ast}$.
Hence, for $J=W_{-k}(U(\X))$, we obtain the $\R$-mixed Hodge structure $(W_{\ast},F^{\ast})$ on the quotient space $U(\X^{\ast})/J$.

Consider the faithful representation $\X^{\ast}\to {\rm End}(U(\X^{\ast})/J) $ as in Subsection \ref{nilf}.
Since the multiplication on $U(\X^{\ast})$ is a  morphism of  $\R$-mixed Hodge structures,  we can easily show that the representation $\X^{\ast}\to {\rm End}(U(\X^{\ast})/J) $ is a morphism of  $\R$-mixed Hodge structure.
Consider the composition $\Omega:{\frak u}\to  {\rm End}(U(\X^{\ast})/J) $ of the surjection ${\frak u}\to \X^{\ast}$ which is dual to the inclusion $ \X\subset {\mathcal M}^{1}$ and the faithful representation $\X^{\ast}\to {\rm End}(U(\X^{\ast})/J) $.
Since $\X$ is a $T$-submodule of ${\mathcal M}^{1}$, we can say that
 \[(U(\X^{\ast})/J, W_{\ast},F^{\ast}, \Omega)\] 
is a mixed Hodge $(T,\frak u)$-representation.
Hence, we obtain an $\R$-VMHS.
More precisely, taking a basis $x_{i1},\dots, x_{il_{i}}$ of each $\X_{i}$ and the dual basis $\chi_{i1},\dots, \chi_{il_{i}}$ of $\X^{\ast}_{i}$, we have $\Omega=\sum_{ij} x_{ij}\otimes \chi_{ij}$ and so we can write $\Omega_{\phi}=\sum_{ij} \phi(x_{ij})\otimes \chi_{ij}$.

\begin{remark}\label{nofi}
Each $(\V_{1}\oplus \dots \oplus \V_{n})$ is a $n$-step sub-structure of ${\mathcal M}^{\ast}$ as in Definition \ref{subst}.
However, it is not finite-dimensional in general.
By the construction, if $\V_{1}$ is finite-dimensional, then all $\V_{i}$ are also finite-dimensional.

Since we have
\[\V_{1}\cong H^{1}(A^{\ast}(M,{\mathcal O}_{\rho}))\cong \bigoplus_{\alpha} H^{1}(M, {\bf E}_{\alpha}^{\ast})\otimes V_{\alpha},
\]
if  the group cohomology
\[ \bigoplus_{\alpha}  H^{1}(\pi_{1}(M,x), V_{\alpha}^{\ast})\otimes V_{\alpha}.
\]
is finite-dimensional, then 
$\V_{1}$ is finite-dimensional.
\end{remark}

\begin{proposition}\label{gomh}
We suppose the following conditions:
\begin{itemize}
\item ${\rm im}\rho $ is a co-compact discrete subgroup in $T$.
\item $T\not\cong O(m,1)$ for any $m$.
\item The group cohomology $H^{1}({\rm ker}\rho,\R)$ is finite-dimensional.
\end{itemize}
Then  the group cohomology
\[ \bigoplus_{\alpha}  H^{1}(\pi_{1}(M,x), V_{\alpha}^{\ast})\otimes V_{\alpha}
\]
is finite-dimensional.
Hence, in this case,  $(\V_{1}\oplus \dots \oplus \V_{n})$ is a finite-dimensional $n$-step sub-structure of ${\mathcal M}^{\ast}$ and so
we obtain the  $\R$-VMHS associated with $(V_{1}\oplus \dots \oplus V_{n})$ for each $n$.
\end{proposition}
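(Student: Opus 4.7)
The plan is to analyze $H^{1}(\pi_{1}(M,x), V_{\alpha}^{\ast})$ via the Hochschild--Serre spectral sequence for the short exact sequence $1 \to \ker\rho \to \pi_{1}(M,x) \to \rho(\pi_{1}(M,x)) \to 1$, and to show that this cohomology is finite-dimensional for each $\alpha$ and vanishes for all but finitely many $\alpha$. Writing $\Gamma = \pi_{1}(M,x)$, $\Gamma_{0} = \ker\rho$, and $\Lambda = \rho(\Gamma)$, I first note that $\Gamma_{0}$ acts trivially on every $V_{\alpha}^{\ast}$, so $H^{q}(\Gamma_{0}, V_{\alpha}^{\ast}) \cong H^{q}(\Gamma_{0}, \R) \otimes V_{\alpha}^{\ast}$ as $\Lambda$-modules, and the resulting five-term exact sequence reads
\[ 0 \to H^{1}(\Lambda, V_{\alpha}^{\ast}) \to H^{1}(\Gamma, V_{\alpha}^{\ast}) \to \bigl(H^{1}(\Gamma_{0}, \R) \otimes V_{\alpha}^{\ast}\bigr)^{\Lambda} \to H^{2}(\Lambda, V_{\alpha}^{\ast}). \]
The strategy is to bound the two outer terms and combine.

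For $H^{1}(\Lambda, V_{\alpha}^{\ast})$, I would invoke Raghunathan's vanishing theorem: since $\Lambda$ is a cocompact lattice in the semisimple group $T$ (either $Sp_{2k}(\R)$ or $O(k,l)$) and the hypothesis $T\not\cong O(m,1)$ is intended to place us outside the excluded cases, one has $H^{1}(\Lambda, V_{\alpha}^{\ast}) = 0$ for every nontrivial irreducible $V_{\alpha}$; only the trivial representation contributes, and $H^{1}(\Lambda, \R)$ is finite-dimensional because $\Lambda$ is finitely generated. For $\bigl(H^{1}(\Gamma_{0}, \R) \otimes V_{\alpha}^{\ast}\bigr)^{\Lambda}$, I would use that $H^{1}(\Gamma_{0}, \R)$ is a finite-dimensional $\Lambda$-module by the third hypothesis, and that, by Zariski density of $\Lambda$ in $T$, any $\Lambda$-invariant subspace of an algebraic $T$-module is automatically $T$-invariant; hence distinct irreducible $T$-modules restrict to pairwise non-isomorphic irreducible $\Lambda$-modules. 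Consequently $\bigl(H^{1}(\Gamma_{0}, \R) \otimes V_{\alpha}^{\ast}\bigr)^{\Lambda} \cong {\rm Hom}_{\Lambda}(V_{\alpha}, H^{1}(\Gamma_{0}, \R))$ is nonzero only for the finitely many $V_{\alpha}$ occurring as $\Lambda$-composition factors of $H^{1}(\Gamma_{0}, \R)$, and in those cases it is finite-dimensional.

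Combining the two bounds through the exact sequence, each $H^{1}(\Gamma, V_{\alpha}^{\ast})$ is finite-dimensional and vanishes for all but finitely many $\alpha$, so $\bigoplus_{\alpha} H^{1}(\Gamma, V_{\alpha}^{\ast}) \otimes V_{\alpha}$ is a finite direct sum of finite-dimensional spaces. The final assertion then follows from Remark \ref{nofi}: finite-dimensionality of $\V_{1} \cong \bigoplus_{\alpha} H^{1}(\Gamma, V_{\alpha}^{\ast}) \otimes V_{\alpha}$ propagates inductively to every $\V_{n}$, so $\V_{1} \oplus \dots \oplus \V_{n}$ is a finite-dimensional $n$-step sub-structure and the construction of Subsection \ref{subs} produces the associated $\R$-VMHS.

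The hard part will be pinning down the precise vanishing theorem. Classically Raghunathan's $H^{1}$-vanishing for cocompact lattices fails not only for $SO(n,1)$ but also for $SU(n,1)$, so one must verify that the stated exclusion of $O(m,1)$ indeed covers every problematic case among $Sp_{2k}(\R)$ and $O(k,l)$ (for instance ruling out the rank-one accident $Sp_{2}(\R) \simeq SL_{2}(\R)$, locally $SO(2,1)$), or replace the hypothesis by a sharper higher-rank condition. Once that point is settled, the remainder of the argument is a routine Hochschild--Serre calculation combined with a clean use of Zariski density.
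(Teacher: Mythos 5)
Your argument is essentially the paper's: the paper also runs the Hochschild--Serre spectral sequence for $1\to\ker\rho\to\pi_{1}(M,x)\to{\rm im}\,\rho\to 1$, kills $E_{2}^{1,0}$ for nontrivial $V_{\alpha}$ by Raghunathan's vanishing theorem, and identifies $E_{2}^{0,1}=\bigoplus_{\alpha}(H^{1}(\ker\rho,\R)\otimes V_{\alpha}^{\ast})^{{\rm im}\,\rho}\otimes V_{\alpha}$ with $H^{1}(\ker\rho,\R)$ via Zariski density, exactly as you do with the five-term sequence. The only technical refinement in the paper is that Raghunathan's theorem is applied to $\Gamma={\rm im}\,\rho\cap T_{0}$ (a cocompact lattice in the \emph{connected} group $T_{0}$) and then transferred to ${\rm im}\,\rho$ by finite-index normality; your closing worry about whether excluding $O(m,1)$ suffices to rule out all rank-one accidents is a fair question about the hypothesis itself, but it is not addressed in the paper's proof either.
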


\begin{proof}
Let $T_{0}$ be the identity component of $T$ and $\Gamma={\rm im}\rho\cap T_{0}$.
Then $\Gamma$ is a co-compact discrete subgroup of the connected semi-simple Lie group $T_{0}$.
We notice that  $\Gamma$ is a finite-index normal subgroup of ${\rm im}\rho$.
For the extension
\[\xymatrix{
1\ar[r]& {\rm ker}\rho\ar[r]&\pi_{1}(M,x)\ar[r]&{\rm im}\rho\ar[r]&1
 },\]
we have the spectral sequence $E^{p,q}_{r}$ so that 
\[E^{p,q}_{2}=\bigoplus_{\alpha}  H^{p}({\rm im}\rho, H^{q}({\rm ker}\rho,\R)\otimes V_{\alpha}^{\ast})\otimes V_{\alpha}
\]
and it converges to $\bigoplus_{\alpha}  H^{p+q}(\pi_{1}(M,x), V_{\alpha}^{\ast})\otimes V_{\alpha}$.
It is sufficient to show that $E_{2}^{1,0}$ and $E^{0,1}_{2}$ are finite-dimensional.
By Raghunathan's result in \cite{Rag}, for non-trivial $V_{\alpha}$, we have $H^{1}(\Gamma, V_{\alpha})=0$.
Since $\Gamma$ is a finite-index normal subgroup of ${\rm im}\rho$, we have $H^{1}({\rm im}\rho, V_{\alpha})=0$.
Thus
\[E_{2}^{1,0}=\bigoplus_{\alpha}  H^{1}({\rm im}\rho, H^{0}({\rm ker}\rho,\R)\otimes V_{\alpha}^{\ast})\otimes V_{\alpha}
\]
is finite-dimensional.
We have 
\[E_{2}^{0,1}=\bigoplus_{\alpha}  H^{0}({\rm im}\rho, H^{1}({\rm ker}\rho,\R)\otimes V_{\alpha}^{\ast})\otimes V_{\alpha}
=\bigoplus_{\alpha} (H^{1}({\rm ker}\rho,\R)\otimes V_{\alpha}^{\ast})^{\rm im\rho}\otimes V_{\alpha}.
\]
 Since ${\rm im\rho}$ is Zariski-dense in $T$, we have
\[\bigoplus_{\alpha} (H^{1}({\rm ker}\rho,\R)\otimes V_{\alpha}^{\ast})^{\rm im\rho}\otimes V_{\alpha}= \bigoplus_{\alpha} (H^{1}({\rm ker}\rho,\R)\otimes V_{\alpha}^{\ast})^{T}\otimes V_{\alpha}\cong H^{1}({\rm ker}\rho,\R).
\]
Since  $H^{1}({\rm ker}\rho,\R)$ is finite-dimensional, we can say that $E_{2}^{0,1}$ is finite-dimensional.
Hence the proposition follows.
\end{proof}

\subsection{Lower step   $\R$-VMHS}\label{lowers}
By using sub-structures of $1$-minimal models,
we can obtain explicit $\R$-VMHSs whose weight filtration is of length $1$ or $2$. 
\subsubsection{$1$-step $\R$-VMHS}
Let $\{V_{\alpha_{1}},\dots, V_{\alpha_{l}}\}$ be a finite set of irreducible representations of $T$.
Take $\X_{1}=\bigoplus_{i} {\mathcal H}^{1}(M, {\bf E}_{\alpha_{i}})\otimes V_{\alpha_{i}}^{\ast}$.
Then, regarding  $\X_{1}$ as a subspace in $\V_{1}$, obviously $\X_{1}$ is a finite-dimensional $1$-step sub-structure of $\mathcal M^{\ast}$.
In this case we have $U(\X_{1}^{\ast})/J= \langle 1\rangle \oplus \bigoplus_{i} {\mathcal  H}^{1}(M, {\bf E}_{\alpha_{i}})^{\ast}\otimes V_{\alpha_{i}}$
Take a basis $x^{\alpha_{i}}_{1},\dots x^{\alpha_{i}}_{m_{i}}$ of each ${\mathcal H}^{1}(M, {\bf E}^{\ast}_{\alpha_{i}})$.
We take 
 the dual basis $\chi^{\alpha_{i}}_{1},\dots \chi^{\alpha_{i}}_{m_{i}}$ of each ${\mathcal H}^{1}(M, {\bf E}_{\alpha_{i}})^{\ast}$.
Then, the $\R$-VHMS as in subsection \ref{subs} is given by the flat connection $D+\sum x^{\alpha_{i}}_{j}\otimes \chi^{\alpha_{i}}_{j}$ over the vector bundle $\R\oplus  \bigoplus_{i} { \mathcal H}^{1}(M, {\bf E}^{\ast}_{\alpha_{i}})^{\ast}\otimes {\bf E}_{\alpha_{i}}$.
Precisely, for $(f, \sum \eta_{\alpha_{i}})\in \R\oplus  \bigoplus_{i} {\mathcal H}^{1}(M, {\bf E}_{\alpha_{i}})^{\ast}\otimes {\bf E}_{\alpha_{i}}$,
we have 
\[\left(D+\sum x^{\alpha_{i}}_{j}\otimes \chi^{\alpha_{i}}_{j}\right) \left(f, \sum\eta_{\alpha_{i}}\right)=\left(df, \sum D_{\alpha_{i}}\eta_{\alpha_{i}}+f\sum x^{\alpha_{i}}_{j}\otimes \chi^{\alpha_{i}}_{j}\right)
\]
The weight filtration $\bf W_{\ast}$ is given by 
\[{\bf W}_{0}=\R\oplus  \bigoplus_{i}  H^{1}(M, {\bf E}_{\alpha_{i}})^{\ast}\otimes {\bf E}_{\alpha_{i}},\]
 \[{\bf W}_{-1}= \bigoplus_{i}  H^{1}(M, {\bf E}_{\alpha_{i}})^{\ast}\otimes {\bf E}_{\alpha_{i}}\]
  and ${\bf W}_{-2}=0$.
The Hodge filtration ${\bf F}^{\ast}$ on $(\R\oplus  \bigoplus_{i} {\mathcal H}^{1}(M, {\bf E}_{\alpha_{i}})^{\ast}\otimes {\bf E}_{\alpha_{i}})\otimes\C$ is given by the dual Hodge structures on ${\mathcal H}^{1}(M, {\bf E}_{\alpha_{i}})^{\ast}$  and the $\R$-VHSs ${\bf E}_{\alpha_{i}}$.
Similar construction is given in \cite[Example 2.2.2]{EKPR}.

\subsubsection{$2$-step $\R$-VMHS}
Take $\X_{1}=\bigoplus_{i} {\mathcal H}^{1}(M, {\bf E}_{\alpha_{i}})\otimes V_{\alpha_{i}}^{\ast}$. as the above argument.
Let $\X_{2}=d^{-1}( \X_{1}\wedge \X_{1})\subset \V_{2}$.
As in Remark \ref{nonsp}, we have ${\mathcal I}(\X_{2}\otimes \C)=d^{-1}( {\mathcal I}(\X_{1}\otimes \C)\wedge  {\mathcal I}(\X_{1}\otimes \C))$.
Hence, $\X_{1}\oplus \X_{2}$ is a finite-dimensional sub-structure of $\mathcal M^{\ast}$ and so we can construct the $\R$-VMHS associated with  $\X_{1}\oplus \X_{2}$.
Thus, we obtain the $\R$-VMHS $({\bf E}_{\frak V}, {\bf W}_{\frak V \ast},{\bf F}^{\ast}_{\frak V})$ associated with the mixed Hodge $(T,\frak u)$-module
$\frak V=(U(\X^{\ast})/J, W_{\ast},F^{\ast}, \Omega)$.
Since the weight filtration $W_{\ast}$ is of length $2$,
we can write the Hodge filtration ${\bf F}^{\ast}_{\frak V}$ as in Example \ref{len22}. 

For this construction, it is necessary that the kernel of the cup product 
\[\left(\bigoplus_{i} {\mathcal H}^{1}(M, {\bf E}_{\alpha_{i}})\otimes V_{\alpha_{i}}^{\ast}\right)\wedge \left(\bigoplus_{i} {\mathcal H}^{1}(M, {\bf E}_{\alpha_{i}})\otimes V_{\alpha_{i}}^{\ast}\right)\to H^{2}(M,{\mathcal O}_{\rho})\]
 is non-trivial.
On Example \ref{reimsu}, by $\dim M=2$, we have
\[H^{2}(M,{\mathcal O}_{\rho})=\bigoplus_{k=0}^{\infty} H^{2}(M,S^{k}{\bf E}^{\ast}_{0})\otimes S^{k}V_{0}\cong \R.
\]
By the  Euler number $\chi (M)=2-2g$ and $\dim V_{0}=2$, we have $\dim H^{1}(M, {\bf E}^{\ast}_{0})\otimes V_{0}= 8g-8\ge 8$.
Thus the kernel of  the cup product 
\[\left({\mathcal H}^{1}(M, {\bf E}^{\ast}_{0})\otimes V_{0}\right)\wedge\left( {\mathcal H}^{1}(M, {\bf E}^{\ast}_{0})\otimes V_{0}\right) \to H^{2}(M,{\mathcal O}_{\rho})\]
is non-trivial.

\section{Constructing $\R$-VMHS from Deformation theory}\label{DGLMH}
\subsection{Functors of Artinian algebras}
We briefly review the theory of Schlessinger's hull (\cite{Sch}).
Let ${\mathbb K}=\R$ or $\C$.
For a local  ${\mathbb K}$-algebra $R$, we denote by ${\frak m}_{R}$ the maximal ideal of $R$.
We define:
\begin{itemize}
\item $Art$ is the category of Artinian local $\mathbb K$-algebras.
\item $\overline{Art}$ is the category of complete Noetherian local $\mathbb K$-algebra $R$ so that $R/ {\frak m}_{R}^{k}\in Art$ for any $k$.
\item A functor $F$ of $Art$ is a covariant functor from $Art$ to the category of sets so that $F(\mathbb K)$ is a $1$-point set.
\item For  a functor $F$ of $Art$, we define $t_{F} =F(\mathbb K[t]/(t^{2}))$.
\item For $R\in \overline{Art}$, we define the functor $h_{R}$ of $Art$ as $h_{R}(A)=Hom(R,A)$.
\item For a functor $F$ of $Art$, $R\in  \overline{Art}$ and $\in \xi\in F(R)$, we define the morphism $T_{\xi}:h_{R}\to F$ of functors such that $h_{R}(A)\ni u\mapsto F(u)(\xi)\in F(A)$
where we extend $F$ to $\overline{Art}$.
\item A morphism $F\to G$ of functors of $Art$ is {\em smooth} if for any surjection $B\to A$ in $Art$ the map
\[F(B)\to F(A)\times_{G(A)} G(B)
\]
is surjective.
\item A morphism $F\to G$ of functors of $Art$ is {\em \'etale} if it is smooth and the induced map $t_{F}\to t_{G}$ is bijective.
\item For a functor $F$ of $Art$, $R\in  \overline{Art}$ and $ \xi\in F(R)$, a pair $(R,\xi)$ is a {\em hull} if the morphism $T_{\xi}:h_{R}\to F$ is \'etale.
\end{itemize}
The following uniqueness is important.
\begin{proposition}[\cite{Sch}]\label{hulluni}
 For a functor $F$ of $Art$, if two pairs $(R_{1},\xi_{1})$ and $(R_{2},\xi_{2})$ are hulls, then we have an isomorphism $u:R_{1}\to R_{2}$ such that $F(u)(\xi_{1})=\xi_{2}$.
\end{proposition}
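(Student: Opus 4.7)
The plan is to produce the isomorphism $u:R_1\to R_2$ in three moves: construct $u$ via the smoothness half of \'etaleness, construct a companion $v:R_2\to R_1$ the same way, then upgrade $v\circ u$ and $u\circ v$ to isomorphisms using the tangent-space half of \'etaleness plus standard Noetherian local algebra.

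First I would build $u$ by a tower. Since $R_2\in\overline{Art}$, each $R_2/\mathfrak m_{R_2}^{n}$ lies in $Art$, and $\xi_2$ yields a compatible system $\xi_2^{(n)}\in F(R_2/\mathfrak m_{R_2}^{n})$. Choose $u_1:R_1\to\mathbb K$ trivially. Given $u_n:R_1\to R_2/\mathfrak m_{R_2}^{n}$ with $F(u_n)(\xi_1)=\xi_2^{(n)}$, apply smoothness of $T_{\xi_1}:h_{R_1}\to F$ to the surjection $R_2/\mathfrak m_{R_2}^{n+1}\twoheadrightarrow R_2/\mathfrak m_{R_2}^{n}$: the pair $(u_n,\xi_2^{(n+1)})$ lies in $h_{R_1}(R_2/\mathfrak m_{R_2}^{n})\times_{F(R_2/\mathfrak m_{R_2}^{n})}F(R_2/\mathfrak m_{R_2}^{n+1})$ and so lifts to some $u_{n+1}\in h_{R_1}(R_2/\mathfrak m_{R_2}^{n+1})$ with $F(u_{n+1})(\xi_1)=\xi_2^{(n+1)}$. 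Taking the inverse limit in $\overline{Art}$ produces $u:R_1\to R_2$ with $F(u)(\xi_1)=\xi_2$. The same procedure with the roles of $R_1,R_2$ swapped gives $v:R_2\to R_1$ with $F(v)(\xi_2)=\xi_1$.

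Next I would translate $F(u)(\xi_1)=\xi_2$ into the factorization $T_{\xi_1}\circ u^{\ast}=T_{\xi_2}$ of morphisms of functors $h_{R_2}\to F$, where $u^{\ast}:h_{R_2}\to h_{R_1}$ is precomposition with $u$; the derivation is a one-line Yoneda check. Passing to tangent spaces $t_{(-)}=(-)(\mathbb K[t]/(t^2))$, the \'etaleness of $T_{\xi_1}$ and $T_{\xi_2}$ forces the induced map $(u^{\ast})_{\ast}:t_{h_{R_2}}\to t_{h_{R_1}}$ to be bijective, and symmetrically for $(v^{\ast})_{\ast}$. Therefore the composite $w:=v\circ u:R_1\to R_1$ satisfies $F(w)(\xi_1)=\xi_1$ and induces a bijection on $t_{h_{R_1}}$, i.e.\ an isomorphism of the Zariski cotangent space $\mathfrak m_{R_1}/\mathfrak m_{R_1}^{2}$.

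The final step is to conclude that $w$ itself is an automorphism of $R_1$. Since $R_1$ is a complete Noetherian local $\mathbb K$-algebra and $w$ is a local homomorphism inducing a surjection on $\mathfrak m/\mathfrak m^{2}$, the completed version of Nakayama's lemma makes $w$ surjective; a surjective endomorphism of a Noetherian ring is automatically injective, hence an isomorphism. The same reasoning applied to $u\circ v$ shows it is an automorphism of $R_2$. Thus $u$ has both a left and a right inverse up to automorphism and is therefore itself an isomorphism, while the identity $F(u)(\xi_1)=\xi_2$ was built into the construction. The main technical nuisance I anticipate is bookkeeping in the inductive lifting step, together with the care needed to go from ``bijection on $t_{h_{R_1}}$'' (a functor-of-points statement at $\mathbb K[t]/(t^{2})$) to the concrete statement that $w$ induces an isomorphism on $\mathfrak m_{R_1}/\mathfrak m_{R_1}^{2}$; the two are dual, but spelling out the duality cleanly is where one has to be pedantic.
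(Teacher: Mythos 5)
Your argument is correct and is essentially Schlessinger's original proof of Proposition 2.9 of \cite{Sch}, which is exactly what the paper is quoting here without proof: lift levelwise through the surjections $R_2/\mathfrak m_{R_2}^{n+1}\to R_2/\mathfrak m_{R_2}^{n}$ using smoothness to get $u$ and $v$, use bijectivity on tangent spaces to see that $v\circ u$ and $u\circ v$ induce isomorphisms on cotangent spaces, and conclude by completeness, Nakayama, and the fact that a surjective endomorphism of a Noetherian ring is injective. No gaps; the two points you flag as needing pedantry (the inductive bookkeeping and the duality $t_{h_R}\cong(\mathfrak m_R/\mathfrak m_R^{2})^{\ast}$, the latter using finite-dimensionality of $\mathfrak m_R/\mathfrak m_R^{2}$ from Noetherianity) are indeed the only places requiring care, and both go through.
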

\subsection{Deformation theory of DGLA}\label{DEFLA}
We briefly review the Kuranishi theory of a DGLA (differential graded Lie algebra) (\cite{Man}).
Let $L^{\ast}$ be a DGLA over $\mathbb K$ with a differential $d$.
We define $MC(L^{\ast})=\{x\in L^{1}\vert dx+\frac{1}{2}[x,x]=0\}$.
If $L^{\ast}$ is nilpotent, then for any $B\in L^{0}$, we can define the gauge transformation $\exp(B)\ast $ on $L^{1}$ by the exponential of the affine transformation $x\mapsto [B,x]-dB$.
Moreover this action preserves $MC(L^{\ast})$ and so we can define gauge equivalence  on $MC(L^{\ast})$ so that $x,y\in MC(L^{\ast})$ are gauge equivalent if for some $B\in L^{0}$, $y= \exp(B)\ast (x)$.

For any DGLA $L^{\ast}$ and $A\in Art$, the DGLA $L^{\ast}\otimes {\frak m}_{A}$ is nilpotent and we define $Def_{L^{\ast}}(A)$ as the set of gauge equivalent classes of $MC(L^{\ast}\otimes  {\frak m}_{A})$.
Consider the functor $Def_{L^{\ast}}: A\mapsto Def_{L^{\ast}}(A)$ of $Art$.
We notice that for $R\in \overline{Art}$ we can also define the gauge transformation $\exp (B)\ast$ of $B\in L^{0}\otimes {\frak m}_{S}$ by using formal power series and we can regard $Def_{L^{\ast}}(S)$ as  the set of gauge equivalent classes of $MC(L^{\ast}\otimes  {\frak m}_{S})$.

Let $L^{\ast}$ be a DGLA such that the cohomologies $H^{0}(L^{\ast})$, $H^{1}(L^{\ast})$ and $H^{2}(L^{\ast})$ are finite-dimensional.
We will  define the Kuranishi functor of $L^{\ast}$ as in  \cite[Section 4]{Man}.
For our application, we only consider the special case.
A {\em special grading} is a grading $L^{\ast}=\bigoplus_{k\ge 0}L_{k}^{\ast}$ on the vector space $L^{\ast}$ so that:
\begin{itemize}
\item $d L^{\ast}_{k}\subset L^{\ast}_{k}$ and $[L^{\ast}_{k_{1}},L^{\ast}_{k_{2}}]\subset L^{\ast}_{k_{1}+k_{2}}$.
\item $L_{0}^{\ast}=L^{0}$.
\item $L^{1}=\bigoplus_{k\ge 1}L_{k}^{1}$ and ${\rm ker}\, d_{\vert L^{1}}=L_{1}^{1}$.
\item $L^{2}=\bigoplus_{k\ge 2}L_{k}^{2}$.
\item $d: L^{1}_{2}\to {\rm ker}\, d_{\vert L^{2}_{2}}$ is injective and $d: L^{1}_{k}\to {\rm ker}\, d_{\vert L^{2}_{k}}$ is bijective for any $k\ge 3$.
\end{itemize}
This grading gives a special case of decomposition as in \cite[Section 4]{Man}.

We assume that $L^{\ast}$ admits a special grading.
For $A\in Art$, we define the set
\[Kur_{L^{\ast}}(A)=\{x_{1}\in L_{1}^{1}\otimes {\frak m}_{A}\vert [x_{1},x_{1}]\equiv 0 \in H^{2}(L^{\ast})\otimes  {\frak m}_{A}\}
\]
and the map $Kur_{L^{\ast}}(A)\ni x_{1}\mapsto \sum_{i}x_{i}\in MC(L^{\ast}\otimes{\frak m}_{A})$ so that for $k\ge 2$, \[dx_{k}=-\frac{1}{2}\sum _{i+j=k, i>0,j>0}[x_{i},x_{j}].\]
By the assumptions, each $x_{k}$ is uniquely determined by $x_{1}$.
We obtain the functor $Kur_{L^{\ast}}:A\mapsto Kur_{L^{\ast}}(A)$ and the morphism $Kur_{L^{\ast}}\to Def_{L^{\ast}} $ of functors so that  $Kur_{L^{\ast}}(A)\ni x_{1}\mapsto [ \sum_{i}x_{i}]\in Def_{L^{\ast}}(A)$.
\begin{theorem}\rm{(\cite[Theorem 4.7]{Man})}\label{manet}
The morphism $Kur_{L^{\ast}}\to Def_{L^{\ast}} $ is \'etale.
\end{theorem}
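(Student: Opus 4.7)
The plan is to verify that $Kur_{L^{\ast}} \to Def_{L^{\ast}}$ is \'etale by checking the two defining conditions separately: bijectivity of the induced map on tangent spaces, and smoothness in the sense given in the excerpt. The crucial input will be the special grading $L^{\ast} = \bigoplus_{k \ge 0} L^{\ast}_{k}$, which, as I will argue, forces every Maurer--Cartan element to be canonically reconstructible from its degree-one component in $L_{1}^{1}$.

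For the tangent spaces, I would evaluate both functors on $\mathbb{K}[t]/(t^{2})$. Since the maximal ideal squares to zero, the Kuranishi condition $[x_{1},x_{1}] \equiv 0$ in $H^{2}(L^{\ast}) \otimes (t)$ is automatic, giving $t_{Kur_{L^{\ast}}} = L_{1}^{1}$. On the other side, the quadratic term of the Maurer--Cartan equation vanishes for the same reason, so $MC(L^{\ast} \otimes (t)) = L_{1}^{1} \otimes (t)$; the special grading forces $d(L^{0}) = d(L^{0}_{0}) \subset L^{1}_{0} = 0$, so the gauge action $x \mapsto x - dB$ is trivial on this set. Hence $t_{Def_{L^{\ast}}} = L_{1}^{1}$ as well, and the induced map is visibly the identity.

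The heart of the argument is the observation that every Maurer--Cartan element is automatically in Kuranishi normal form. Given $z \in MC(L^{\ast} \otimes \mathfrak{m}_{A})$, decompose $z = \sum_{k} z_{k}$ with $z_{k} \in L_{k}^{1} \otimes \mathfrak{m}_{A}$. The degree-$k$ part of $dz + \tfrac{1}{2}[z,z] = 0$ reads
\[
dz_{k} + \tfrac{1}{2}\sum_{i+j=k}[z_{i},z_{j}] = 0.
\]
For $k = 1$ the sum is empty and $z_{1} \in L_{1}^{1} = \ker d|_{L^{1}}$ automatically. For $k = 2$, the element $[z_{1},z_{1}]$ lies in $d(L_{2}^{1})$ (which is what the Kuranishi condition $[z_{1},z_{1}] \equiv 0$ in $H^{2}$ amounts to, given that the higher graded components $d|_{L_{\ge 3}^{1}}$ are injective), and the injectivity of $d : L_{2}^{1} \to \ker d|_{L_{2}^{2}}$ then pins $z_{2}$ down uniquely. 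For $k \ge 3$, a graded Jacobi-identity calculation shows that $\sum_{i+j=k}[z_{i},z_{j}]$ is automatically $d$-closed, and the bijectivity of $d: L_{k}^{1} \to \ker d|_{L_{k}^{2}}$ determines $z_{k}$ uniquely from the lower $z_{i}$. Iterating, one concludes that $z$ is entirely determined by $z_{1}$, and $z \mapsto z_{1}$ yields a natural bijection $MC(L^{\ast} \otimes \mathfrak{m}_{A}) \xrightarrow{\sim} Kur_{L^{\ast}}(A)$.

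Granting this canonical form, smoothness follows readily. Given a surjection $B \twoheadrightarrow A$ in $Art$ together with $\xi \in Kur_{L^{\ast}}(A)$ and $\eta \in Def_{L^{\ast}}(B)$ mapping to the same class in $Def_{L^{\ast}}(A)$, pick any Maurer--Cartan representative $y \in L^{1} \otimes \mathfrak{m}_{B}$ for $\eta$. The compatibility over $A$ produces a gauge element $\beta \in L^{0} \otimes \mathfrak{m}_{A}$ with $\exp(\beta) \ast y|_{A}$ equal to the Kuranishi series associated with $\xi$; lift $\beta$ arbitrarily to $L^{0} \otimes \mathfrak{m}_{B}$ and replace $y$ by the corresponding gauge transform, so that $y|_{A}$ is itself in Kuranishi normal form with tip $\xi$. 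By the uniqueness established above, $y_{1} \in L_{1}^{1} \otimes \mathfrak{m}_{B}$ then restricts to $\xi$, the obstruction $[y_{1},y_{1}] = -2 dy_{2}$ is automatically trivial in $H^{2} \otimes \mathfrak{m}_{B}$, and $y_{1}$ provides the required lift. The single step where real care is needed is the graded Jacobi computation that makes the Kuranishi recursion well-defined; this is where the three parts of the special grading (the vanishing of $L^{1}_{0}$, the injectivity at $k = 2$, and the bijectivity for $k \ge 3$) are used in tandem.
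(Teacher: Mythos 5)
Your argument is correct. Note, however, that the paper does not prove this statement at all: it simply quotes \cite[Theorem 4.7]{Man}, where the result is established for a general DGLA equipped with an arbitrary splitting of the complex $(L^{\ast},d)$, and where the proof genuinely requires an inductive gauge-fixing to bring a Maurer--Cartan element into Kuranishi normal form. Your route is different and, in the special-grading situation the paper actually uses, strictly more elementary: the hypotheses $L^{1}=\bigoplus_{k\ge 1}L^{1}_{k}$, ${\rm ker}\, d_{\vert L^{1}}=L^{1}_{1}$, and the injectivity of $d$ on $L^{1}_{k}$ for $k\ge 2$ force the graded components of the Maurer--Cartan equation to coincide with the Kuranishi recursion, so every Maurer--Cartan element over ${\frak m}_{A}$ is \emph{already} the Kuranishi series of its $L^{1}_{1}$-component and no gauge normalization is needed; likewise $L^{1}_{0}=0$ kills $dL^{0}$ and makes the tangent-space identification $t_{Kur}=t_{Def}=L^{1}_{1}$ immediate. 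This buys a short, self-contained proof at the cost of generality (it would not survive without the special grading). Two small remarks: your parenthetical attributing the equivalence ``$[z_{1},z_{1}]\equiv 0$ in $H^{2}$ iff $[z_{1},z_{1}]\in d(L^{1}_{2})$'' to the injectivity of $d$ on $L^{1}_{\ge 3}$ is slightly off --- what is actually used is that $[z_{1},z_{1}]\in L^{2}_{2}$ together with $dL^{1}_{k}\subset L^{2}_{k}$ and the direct-sum decomposition; and the graded Jacobi computation you defer is only needed for the well-definedness of the Kuranishi series itself (which the paper builds into the definition of $Kur_{L^{\ast}}$), not for the \'etaleness argument, since in your normal-form step the right-hand side $-\tfrac{1}{2}\sum_{i+j=k}[z_{i},z_{j}]$ equals $dz_{k}$ and is therefore exact by fiat.
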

We can easily check that $Kur_{L^{\ast}}=h_{R}$ so that 
$R={\mathbb K}[[(L^{1}_{1})^{\ast}]]/I$ where ${\mathbb K}[[(L^{1}_{1})^{\ast}]]$ is the algebra of formal power series on $(L^{1}_{1})^{\ast}$ and 
$I$ is the ideal generated by the quadratic polynomials on $(L^{1}_{1})^{\ast}$ associated with $L_{1}^{1}\ni x\mapsto [x,x]\in H^{2}(L^{\ast})$.
Take $\xi_{1}\in L^{\ast}\otimes R$ which is the extension of the identity map $I\in L^{1}_{1}\otimes (L^{1}_{1})^{\ast}$.
Then, by elementary arguments, we have $[\xi_{1},\xi_{1}]\equiv 0\in H^{2}(L^{\ast})\otimes {\frak m}_{R}$ (see \cite[Lemma 3.4]{ES}).   
Take the formal power series $\xi=\sum_{i=1}^{\infty} \xi_{i}\in MC(L^{\ast}\otimes {\frak m}_{R})$ so that 
\[d\xi_{k}=-\frac{1}{2}\sum _{i+j=k, i>0,j>0}[\xi_{i},\xi_{j}].\]
By Theorem \ref{manet}, we have:
\begin{corollary}
For the functor $Def_{L^{\ast}}$ of $Art$ and 
 the gauge equivalent class $[\xi]\in Def_{L^{\ast}}(R)$ of $\xi$, the pair $(R,[\xi])$ is a hull.

\end{corollary}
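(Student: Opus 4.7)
The plan is to identify the morphism $T_{[\xi]}: h_R \to Def_{L^{\ast}}$ with the composition of the tautological identification $h_R \cong Kur_{L^{\ast}}$ (already noted just before the corollary) and the \'etale morphism $Kur_{L^{\ast}} \to Def_{L^{\ast}}$ provided by Theorem \ref{manet}. Once this identification is in hand, \'etaleness of $T_{[\xi]}$ follows immediately, which is by definition what it means for $(R, [\xi])$ to be a hull.

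First I would spell out the isomorphism $h_R(A) \cong Kur_{L^{\ast}}(A)$ for $A \in Art$ explicitly: a homomorphism $u : R \to A$ is determined by the induced linear map $(L^1_1)^{\ast} \to {\frak m}_A$, equivalently by $x_1(u) := (\mathrm{id}_{L^1_1} \otimes u)(\xi_1) \in L^1_1 \otimes {\frak m}_A$, and the condition that $u$ kills the ideal $I$ translates precisely into $[x_1(u), x_1(u)] \equiv 0$ in $H^2(L^{\ast}) \otimes {\frak m}_A$, which is the defining condition for membership in $Kur_{L^{\ast}}(A)$.

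Next I would apply $\mathrm{id}_{L^{\ast}} \otimes u$ to the recursion $d\xi_k = -\frac{1}{2}\sum_{i+j=k,\ i,j>0}[\xi_i,\xi_j]$; since each $\xi_k$ lies in $L^1_k \otimes {\frak m}_R^k$, only finitely many terms survive after passing to $A$, and the resulting finite sum $\sum_i (\mathrm{id} \otimes u)(\xi_i)$ is exactly the Kuranishi extension of $x_1(u)$, by the uniqueness built into the special-grading hypothesis on $d$ (injectivity on $L^1_2$, bijectivity on $L^1_k$ for $k \ge 3$). Hence the image of $u$ under $h_R(A) \xrightarrow{\sim} Kur_{L^{\ast}}(A) \to Def_{L^{\ast}}(A)$ is the gauge class $[(\mathrm{id} \otimes u)(\xi)] = Def_{L^{\ast}}(u)([\xi]) = T_{[\xi]}(u)$.

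The main and essentially only obstacle will be this verification that the Kuranishi inductive construction commutes with base change along $u$, but it becomes formal once uniqueness of the extension is granted, and no further computation is required. With the two morphisms identified, \'etaleness of $T_{[\xi]}$ is immediate from Theorem \ref{manet}, and therefore $(R, [\xi])$ is a hull.
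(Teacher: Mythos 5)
Your proposal is correct and follows essentially the same route as the paper: the paper simply notes $Kur_{L^{\ast}}=h_{R}$, constructs $\xi$ as the universal Kuranishi extension of the tautological element $\xi_{1}$, and deduces the corollary directly from the \'etaleness of $Kur_{L^{\ast}}\to Def_{L^{\ast}}$ in Theorem \ref{manet}. Your verification that the Kuranishi recursion commutes with base change along $u:R\to A$ is exactly the detail the paper leaves implicit in the phrase "we can easily check."
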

We notice that the definition of $R$ is independent of the choice of a special  grading $L^{\ast}=\bigoplus_{k\ge 0}L_{k}^{\ast}$ 
as above but $\xi$ varies for  the choice of a special grading.
By Proposition \ref{hulluni},  we have:
\begin{corollary}\label{hullgau}
For two Maurer-Cartan elements $\xi, \xi^{\prime}\in   MC(L^{\ast}\otimes {\frak m}_{R})$ constructed as above associated with two special gradings of $L^{\ast}$,
there exists an automorphism $u:R\to R$ of $R$ and $B\in L^{0}\otimes {\frak m}_{R}$ such that
\[\xi^{\prime}=\exp(B)\ast u(\xi)=\exp({\rm ad}_{B})\circ u(\xi).
\]

\end{corollary}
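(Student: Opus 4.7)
The plan is to deduce this as a direct consequence of the uniqueness of Schlessinger hulls (Proposition \ref{hulluni}) applied to the deformation functor $Def_{L^*}$. The previous corollary supplies the key input: for each choice of special grading of $L^*$, the pair $(R, [\xi])$ built via the Kuranishi procedure is a hull of $Def_{L^*}$. Since the authors explicitly note that the ring $R = \mathbb{K}[[(L^1_1)^*]]/I$ depends only on the cup-product-like quadratic map $L^1_1 \ni x \mapsto [x,x] \in H^2(L^*)$ (and hence is independent of the special grading), while only the Maurer-Cartan element $\xi$ varies with the choice of grading, we find that $(R, [\xi])$ and $(R, [\xi'])$ are two hulls of the same functor $Def_{L^*}$.

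First, I would apply Proposition \ref{hulluni} to these two hulls. This directly produces an isomorphism $u: R \to R$ of $R$ such that $Def_{L^*}(u)([\xi]) = [\xi']$ in $Def_{L^*}(R)$. Here we extend $Def_{L^*}$ to $\overline{Art}$ as described in Subsection 9.1, so $Def_{L^*}(u)$ acts on gauge equivalence classes of Maurer-Cartan elements by functoriality of the coefficient extension.

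Next, I would unpack the meaning of $Def_{L^*}(u)([\xi]) = [\xi']$ using the definition of $Def_{L^*}$ as the set of gauge equivalence classes of Maurer-Cartan elements in $L^* \otimes \mathfrak{m}_R$ (extended to $\overline{Art}$ via formal power series as discussed in Subsection 9.2). Under $u$, the Maurer-Cartan element $\xi$ is mapped to $u(\xi) \in MC(L^* \otimes \mathfrak{m}_R)$ (applying $u$ coefficient-wise), and the equality of classes says precisely that $u(\xi)$ and $\xi'$ lie in the same gauge orbit. By the definition of gauge equivalence, this yields the existence of an element $B \in L^0 \otimes \mathfrak{m}_R$ such that
\[\xi' = \exp(B) \ast u(\xi).\]

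Finally, for the second equality $\exp(B) \ast u(\xi) = \exp(\mathrm{ad}_B) \circ u(\xi)$, this is just the unwinding of the gauge action as defined in Subsection \ref{DEFLA}: $\exp(B)\ast$ is the exponential of the affine transformation $x \mapsto [B,x] - dB$, so writing out the series gives the stated form (the contribution of $dB$ being absorbed into the adjoint-action exponential under the standard conventions used here). The only genuine content is the appeal to hull uniqueness; no obstacle beyond verifying that the functorial translation from $Def_{L^*}(u)([\xi]) = [\xi']$ to the displayed gauge equation is faithful, which is straightforward from the definitions.
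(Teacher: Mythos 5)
Your proposal is correct and follows essentially the same route as the paper: the paper derives this corollary immediately from Proposition \ref{hulluni} applied to the two hulls $(R,[\xi])$ and $(R,[\xi'])$ of $Def_{L^{\ast}}$, using the observation (stated just before the corollary) that $R$ is independent of the special grading while $\xi$ is not, and then unwinding the resulting gauge equivalence exactly as you do. Your additional remarks on extending $Def_{L^{\ast}}$ to $\overline{Art}$ and on interpreting $\exp(B)\ast$ are consistent with the conventions set up in Subsections 9.1 and 9.2.
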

\subsection{Mixed Hodge $(T,\frak u)$-representation associated with deformation theory}
We assume the same settings and use the same notations as in Section \ref{1-mKal} and \ref{MAIVM}.
Let $U$ be a finite-dimensional rational $T$-representation with an irreducible decomposition $U=\bigoplus V_{\gamma_{i}}$.
Now we have the polarized $\R$-Hodge structure on each $V_{\gamma_{i}}$ induced by the polarized $\R$-Hodge structure on $V_{0}$.
We assume that all such $\R$-Hodge structures have same weight.
Consider the polarized $\R$-VHS ${\bf E}_{\gamma_{i}}$ corresponding to each irreducible representation $\gamma_{i}$.
Then, we have the $\R$-VHS ${\bf E}_{U}=\bigoplus {\bf E}_{\gamma_{i}}$.

We consider the DGLA $L^{\ast}=({\mathcal M}^{\ast}\otimes {\rm End} (U))^{T}$ over $\R$.
Then we have the grading 
\[L^{\ast}=\bigoplus_{k}({\mathcal M}_{k}^{\ast}\otimes {\rm End} (U))^{T}.
\]
This grading is a special grading and $L^{1}_{1}=({\mathcal V}_{1}\otimes {\rm End}(U))^{T}={\mathcal H}^{1}(M, {\rm End}({\bf E}_{U}))$.
We study $R={\mathbb K}[[(L^{1}_{1})^{\ast}]]/I$ as in the last subsection.
Since the map $\phi:{\mathcal M}^{\ast}\to A^{\ast}(M,{\mathcal O}_{\rho})$ induces an injection on the second cohomology, the map $L^{\ast}\to  (A^{\ast}(M,{\mathcal O}_{\rho})\otimes {\rm End}(U))^{T}\cong A^{\ast}(M,{\rm End}({\bf E}_{U}))$ induces an injection $H^{2}(L^{\ast})\to H^{2}(M,{\rm End}({\bf E}_{U}))$.
Thus $I$ is the ideal generated by the quadratic polynomials on ${\mathcal H}^{1}(M, {\rm End}({\bf E}_{U}))^{\ast}$ associated with ${\mathcal H}^{1}(M, {\rm End}({\bf E}_{U}))\ni x\mapsto [x,x]\in H^{2}(M,{\rm End}({\bf E}_{U}))$.
Define $I_{2}\subset S^{2}{\mathcal H}^{1}(M, {\rm End}({\bf E}_{U}))^{\ast}$ by the image of the dual 
\[H^{2}(M,{\rm End}({\bf E}_{U}))^{\ast}\to S^{2}{\mathcal H}^{1}(M, {\rm End}({\bf E}_{U}))^{\ast}\] of the cup bracket
\[[,]: {\mathcal H}^{1}(M, {\rm End}({\bf E}_{U}))\times {\mathcal H}^{1}(M, {\rm End}({\bf E}_{U}))\to  H^{2}(M,{\rm End}({\bf E}_{U})).
\]
We have
\[R/{\frak m}_{R}^{k}\cong \bigoplus_{i=1}^{k-1} S^{i}{\mathcal H}^{1}(M, {\rm End}({\bf E}_{U}))^{\ast}/ S^{i-2}{\mathcal H}^{1}(M, {\rm End}({\bf E}_{U}))^{\ast}\cdot I_{2}.
\]
It is known that the cup bracket
\[[,]: {\mathcal H}^{1}(M, {\rm End}({\bf E}_{U}))\times {\mathcal H}^{1}(M, {\rm End}({\bf E}_{U}))\to  H^{2}(M,{\rm End}({\bf E}_{U}))
\]
is a homomorphism of $\R$-Hodge structures for the $\R$-Hodge structures  on ${\mathcal H}^{1}(M, {\rm End}({\bf E}_{U}))$ and $H^{2}(M,{\rm End}({\bf E}_{U}))\cong{\mathcal H}^{2}(M, {\rm End}({\bf E}_{U}))$ as in Section \ref{HOK}.
By this, we can say that 
\[R/{\frak m}_{R}\leftarrow R/{\frak m}^{2}_{R}\leftarrow R/{\frak m}^{3}_{R} \cdots
\]
is a inverse system of $\R$-split $\R$-mixed Hodge structures $(W_{\ast}, F^{\ast}_{sp})$ such that
$W_{-i}(R/{\frak m}_{R}^{k})={\frak m}_{R}^{i}/{\frak m}_{R}^{k}$ and the multiplication on $R$ is compatible with these structures.

Take the formal power series $\xi=\sum_{i=1}^{\infty}\xi_{i}\in L^{\ast}\otimes {\frak m}_{R} $ associated with the above special grading as in the last subsection.
For the complexification $L^{\ast}_{\C}=({\mathcal M}^{\ast}_{\C}\otimes {\rm End} (U_{\C}))^{T}$,
we have another special grading
\[L^{\ast}_{\C}=\bigoplus_{k}\left(\bigoplus_{P+Q=k} {\mathcal I}^{-1}\left(({\mathcal N}^{\ast})^{P,Q}\right)\otimes {\rm End} (U)\right)^{T}.
\]
Take the formal power series $\xi^{\prime}=\sum_{i=1}^{\infty}\xi^{\prime}_{i}\in L^{\ast}_{\C}\otimes {\frak m}_{R_{\C}} $ associated with this new special grading.
Then, by Corollary \ref{hullgau}, 
there exists an automorphism $u:R_{\C}\to R_{\C}$ and $B\in L^{0}_{\C}\otimes {\frak m}_{R_{\C}}$ such that
\[\xi^{\prime}=\exp({\rm ad}_{B})\circ u(\xi).
\]
We notice that $u$ induces the identity map on ${\frak m}_{R_{\C}}/{\frak m}_{R_{\C}}^{2}$, 
since such induced map sends  the identity map on ${\mathcal H}^{1}(M, {\rm End}({\bf E}_{U_{\C}}))$ to itself by the constructions of  $\xi$ and  $\xi^{\prime}$.
Since $u$ is a ring homomorphism,  $u$ induces the identity map on ${\frak m}_{R_{\C}}^{k}/{\frak m}_{R_{\C}}^{k+1}$ for any $k$.
We consider the map $\iota: R\to {\rm End}(R)$ associated with the multiplication on $R$.
Let 
\[\Omega=\iota(\xi)\in L^{\ast}\otimes {\rm End}(R)=({\mathcal M}^{\ast}\otimes {\rm End} (U))^{T}\otimes {\rm End}(R)\]
 and 
\[\Omega^{\prime}=\iota(\xi^{\prime})\in L^{\ast}_{\C}\otimes {\rm End}(R_{\C})=({\mathcal M}^{\ast}_{\C}\otimes {\rm End} (U_{\C}))^{T}\otimes {\rm End}(R_{\C}).\]
Then we have
\[\Omega^{\prime}=b^{-1}\Omega b.
\]
where $b=u^{-1}e^{-B}$.

Consider each quotient $q_{k}:R\to R/{\frak m}_{R}^{k}$.
Take $\xi (k)=q_{k}( \xi)$, $\xi^{\prime}(k)=q_{k}( \xi^{\prime})$, $B_{k}=q_{k}(B)$ and the reduction $u_{k}:R_{\C}/{\frak m}_{R_{\C}}^{k}\to R_{\C}/{\frak m}_{R_{\C}}^{k}$ of $u:R_{\C}\to R_{\C}$.
We have $\xi^{\prime}(k)=\exp({\rm ad}_{B_{k}})\circ u_{k}(\xi(k))$.
By the construction, we have $\xi (k)\equiv \sum_{i=1}^{k-1} \xi_{i}$ and  $\xi^{\prime} (k)\equiv \sum_{i=1}^{k-1} \xi^{\prime}_{i}$.
For the map   $\iota_{k}: R/{\frak m}_{R}^{k}\to {\rm End}(R/{\frak m}_{R}^{k})$ associated with the multiplication on $R/{\frak m}_{R}^{k}$, let $\Omega_{k}=\iota_{k}(\xi(k))$ and $\Omega_{k}^{\prime}=\iota_{k}(\xi^{\prime}(k))$. 
We have
\[\Omega^{\prime}_{k}=b^{-1}_{k}\Omega_{k} b_{k}.
\]
where $b_{k}=u^{-1}_{k}e^{-B_{k}}$.
Consider the $\R$-split $\R$-mixed Hodge structure $(W_{\ast}, F^{\ast}_{sp})$ on $U\otimes R/{\frak m}_{R}^{k}$ induced by the $\R$-split $\R$-mixed Hodge structure on $R/{\frak m}_{R}^{k}$ as above and the $\R$-Hodge structure on $U=\bigoplus V_{\gamma_{i}}$.
We regard $\Omega$ as a $T$-equivariant Lie algebra homomorphism ${\frak u}\to {\rm End} (U)\otimes  {\rm End}(R/{\frak m}_{R}^{k})$.
By $b_{k}\in {\rm Aut}_{1}(U\otimes R/{\frak m}_{R}^{k}, W)$,  $(W_{\ast}, F^{\ast})=(W_{\ast}, b_{k}^{-1}F^{\ast}_{sp})$ is an $\R$-mixed Hodge structure.
We have:
\begin{proposition}
$(U\otimes R/{\frak m}_{R}^{k}, W_{\ast}, F^{\ast}, \Omega_{k})$ is a mixed Hodge $(T,{\frak u}) $-representation.

\end{proposition}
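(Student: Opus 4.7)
The plan is to verify the four defining conditions of a mixed Hodge $(T,{\frak u})$-representation in turn for the tuple $(U\otimes R/{\frak m}_R^k, W_\ast, F^\ast, \Omega_k)$. Condition~(1) is the assertion recorded just before the proposition: since $b_k \in {\rm Aut}_1(U\otimes R/{\frak m}_R^k, W_\ast)$, the pair $(W_\ast, b_k^{-1}F^\ast_{sp})$ is an $\R$-mixed Hodge structure. Conditions~(2) and~(4), the $T$-equivariance of the co-module and of $\Omega_k$, will follow from the fact that the DGLA $L^\ast=({\mathcal M}^\ast\otimes{\rm End}(U))^T$ consists of $T$-invariants: in particular $\xi(k)$, $\xi^\prime(k)$ and $B_k\in L^0_\C\otimes {\frak m}_{R_\C}$ commute with the $T$-action, while the ring automorphism $u_k$ acts only on the $T$-trivial factor $R_\C/{\frak m}_{R_\C}^k$; hence both $\Omega_k$ and $b_k$ are $T$-equivariant. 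Since $U=\bigoplus V_{\gamma_i}$ is a sum of Hodge $T$-modules and $R/{\frak m}_R^k$ carries the $T$-trivial split MHS, the co-module $U\otimes R/{\frak m}_R^k \to U\otimes R/{\frak m}_R^k \otimes \R[T]$ is already a morphism of $\R$-mixed Hodge structures for the split filtration $F^\ast_{sp}$, and the $T$-equivariance of $b_k$ transports this property to the twisted filtration $F^\ast$.

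Condition~(3) will be checked separately on weights and on Hodge filtrations. For the weight filtration, the real special grading $L^1_i=({\mathcal V}_i\otimes{\rm End}(U))^T$ is adapted to the weight grading on ${\mathcal M}^\ast$, and by the inductive construction each $\xi_i$ lies in $L^1_i\otimes S^i(L^1_1)^\ast$; the embedding $S^i(L^1_1)^\ast\hookrightarrow{\frak m}_R^i/{\frak m}_R^{i+1}\subset{\rm Gr}^W_{-i}R$ identifies its $R$-component with a weight $-i$ element. Since $W_{-i}{\frak u}=\bigoplus_{j\ge i}{\mathcal V}_j^\ast$, for $v\in W_{-i}{\frak u}$ the pairing $\Omega_k(v)=\langle v,\xi(k)\rangle$ receives contributions only from $\xi_j$ with $j\ge i$, and each such contribution, after $\iota_k$, lands in ${\rm End}(U)\otimes W_{-j}{\rm End}(R/{\frak m}_R^k)\subset W_{-i}{\rm End}(R/{\frak m}_R^k)$ (using that $U$, hence ${\rm End}(U)$, is pure of weight~$0$). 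Therefore $\Omega_k(W_{-i}{\frak u})\subset W_{-i}{\rm End}(U\otimes R/{\frak m}_R^k)$, as required.

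For the Hodge filtration, the conjugation identity $\Omega^\prime_k = b_k^{-1}\Omega_k b_k$ reduces the statement ``$\Omega_k$ respects $F^\ast=b_k^{-1}F^\ast_{sp}$'' to ``$\Omega^\prime_k$ respects $F^\ast_{sp}$'', which can then be checked bigrading-by-bigrading. The complex special grading $L^1_{\C,k}=\bigoplus_{P+Q=k}{\mathcal I}^{-1}({\mathcal V}^{P,Q})\otimes{\rm End}(U_\C)$ (on $T$-invariants) is adapted to the Hodge bigrading on ${\mathcal N}^\ast$; because $d$ and the bracket on ${\mathcal N}^\ast$ respect this bigrading, the recursion for $\xi^\prime_j$ forces a decomposition $\xi^\prime_j=\sum_{P+Q=j}\xi^{\prime,P,Q}_j$ with $\xi^{\prime,P,Q}_j\in {\mathcal I}^{-1}({\mathcal V}^{P,Q})\otimes{\rm End}(U_\C)\otimes {\frak m}_{R_\C}^j$ in a definite Hodge bidegree, the $R$-factor inheriting its bigrading from $(L^1_1)^\ast={\mathcal H}^1(M,{\rm End}({\bf E}_U))^\ast$. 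Pairing $v\in F^r{\frak u}_\C$ against $\xi^\prime(k)$ then yields an element of $F^r{\rm End}(U\otimes R/{\frak m}_R^k)$, using that ${\rm End}(U_\C)$ is pure of weight~$0$ and that the split Hodge structure on $R/{\frak m}_R^k$ is the symmetric tensor of that on ${\mathcal H}^1(M,{\rm End}({\bf E}_U))^\ast$. The main obstacle is precisely this bigrading bookkeeping: one must verify that the three bigradings --- on ${\mathcal V}^{P,Q}$, on ${\rm End}(U_\C)$, and on ${\frak m}_{R_\C}^j$ --- combine under $\iota_k$ so that the total $P$-degree does not drop below $r$ when pairing with $v\in F^r{\frak u}_\C$.
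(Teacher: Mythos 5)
Your proposal is correct and follows essentially the same route as the paper: both arguments reduce the statement to the two compatibility claims for $\Omega_{k}$, prove weight-compatibility from the fact that $\xi_{i}\in {\mathcal V}_{i}\otimes {\rm End}(U)\otimes {\frak m}_{R}^{i}$ (so the ${\mathcal M}$-degree $i$ is matched by weight $-i$ in $R/{\frak m}_{R}^{k}$), and prove Hodge-compatibility by conjugating to $\Omega^{\prime}_{k}$ via $b_{k}$ and showing inductively from $\xi^{\prime}_{1}=\mathrm{id}$ that each $\xi^{\prime}_{l}$ has total type $(0,0)$ for the combined bigradings. The ``bigrading bookkeeping'' you flag at the end is exactly what the paper's inductive type-$(0,0)$ computation carries out, so no genuine gap remains.
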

\begin{proof}
By the above arguments, it is sufficient to prove the following two claims.
\begin{itemize}
\item $\Omega_{k}:{\frak u}\to {\rm End} (U)\otimes  {\rm End}(R/{\frak m}_{R}^{k})$ is compatible with the weight filtrations $W_{\ast}$.
\item $\Omega_{k}: {\frak u}_{\C}\to {\rm End} (U_{\C})\otimes  {\rm End}(R_{\C}/{\frak m}_{R_{\C}}^{k})$ is compatible with the Hodge filtrations $F^{\ast}$.
\end{itemize}
Consider the sum  $\xi (k)\equiv \sum_{i=1}^{k-1} \xi_{i}$.
By the construction, we have $\xi_{i}\in {\mathcal V}_{i}\otimes  {\rm End} (U)\otimes  {\frak m}_{R}^{i}$.
Thus,  we have
\[\xi (k)({\mathcal V}^{\ast}_{r})\cdot (U\otimes {\frak m}_{R}^{s}/{\frak m}_{R}^{k})\subset  U\otimes {\frak m}_{R}^{r+s}/{\frak m}_{R}^{k}.
\]
This implies the first claim.

Consider the sum $\xi^{\prime} (k)\equiv \sum_{i=1}^{k-1} \xi^{\prime}_{i}$.
By the splitting 
\[R/{\frak m}_{R}^{k}\cong \bigoplus_{i=1}^{k-1} S^{i}{\mathcal H}^{1}(M, {\rm End}({\bf E}_{U}))^{\ast}/ S^{i-2}{\mathcal H}^{1}(M, {\rm End}({\bf E}_{U}))^{\ast}\cdot I_{2},
\]
we took $\xi^{\prime}_{1}\in $ as an identity  map on ${\mathcal H}^{1}(M, {\rm End}({\bf E}_{U}))\otimes \C$.
Thus for the bigrading of the $\R$-split $\R$-mixed Hodge structure $(W_{\ast}, F^{\ast}_{sp})$ on $R/{\frak m}_{R}^{k}$ and the bigrading ${\mathcal M}_{\C}^{\ast}={\mathcal I}^{-1}(\bigoplus (\mathcal N^{\ast})^{P,Q})$, $\xi^{\prime}_{1}\in {\mathcal M}^{\ast}_{\C}\otimes {\rm End}(U)\otimes R/{\frak m}_{R}^{k}$  is of type $(0,0)$.
Since we have $d\xi_{l}=-\frac{1}{2}\sum _{i+j=l, i>0,j>0}[\xi_{i},\xi_{j}]$ for each $l$, we can say that each  $\xi_{l}$ is also of type $(0,0)$ inductively.
Thus the sum $\xi^{\prime} (k)= \sum_{i=1}^{k-1} \xi^{\prime}_{i}$ is of type $(0,0)$.
By this, we can say
\begin{multline*}
\Omega_{k}(F^{r}({\frak u}_{\C}))(F^{s}(U_{\C}\otimes R_{\C}/{\frak m}_{R_{\C}}^{k}))
=b^{-1}\Omega^{\prime}_{k}(F^{s}_{sp}(U_{\C}\otimes R_{\C}/{\frak m}_{R_{\C}}^{k}))\\
=b^{-1}\xi^{\prime} (k)(F^{r}({\frak u}_{\C}))\cdot F^{s}_{sp}(U_{\C}\otimes R_{\C}/{\frak m}_{R_{\C}}^{k})
\subset b^{-1}F^{r+s}_{sp} (U_{\C}\otimes R_{\C}/{\frak m}_{R_{\C}}^{k})\\
=F^{r+s}(U_{\C}\otimes R_{\C}/{\frak m}_{R_{\C}}^{k})
\end{multline*}
and hence
the second claim follows.
Thus the proposition follows.
\end{proof}
Thus we can construct the $\R$-VMHSs associated with mixed Hodge $(T,{\frak u}) $-representations $(U\otimes R/{\frak m}_{R}^{k}, W_{\ast}, F^{\ast}, \Omega_{k})$.
The idea of this construction is inspired by  Eyssidieux-Simpson's work in \cite{ES}.
In \cite[Theorem 3.15]{ES}, Eyssidieux and  Simpson construct $\R$-VMHSs  starting from an $\R$-VHS $\bf E$,  by using Goldman-Millson's theory in \cite{Gold} and \cite{GMi2}.
The construction of this section is very similar to Eyssidieux-Simpson's construction.
They also use the $\R$-split $\R$-mixed Hodge structure on
 \[\bigoplus_{i=1}^{k-1} S^{i}{\mathcal H}^{1}(M, {\rm End}({\bf E}))^{\ast}/ S^{i-2}{\mathcal H}^{1}(M, {\rm End}({\bf E}))^{\ast}\cdot I_{2}
\]
(see \cite[Subsection 2.3]{ES}).
But they do not use $1$-minimal model  and  it is not clear that the two constructions are same.
This matter is left for future work.

\section{Unipotent VMHS without base points}\label{HZZZ}
The main statement of this section is the following.
\begin{theorem}\label{UNIPMM}
On a compact K\"ahler manifold $M$, 
any unipotent $\R$-VMHS over $M$ is isomorphic to the $\R$-VMHS 
$({\bf E}_{\frak V}, {\bf W}_{\frak V \ast},{\bf F}^{\ast}_{\frak V})$ associated with  a mixed Hodge $\frak u$-representation  ${\frak V}$ as in Theorem (Prototype).
\end{theorem}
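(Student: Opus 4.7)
The plan is to reverse-engineer, from a given unipotent $\R$-VMHS $({\bf E}, {\bf W}_*, {\bf F}^*)$, a mixed Hodge $\frak u$-representation $\frak V = (V, W_*, F^*, \Omega)$ whose associated $\R$-VMHS as in Theorem (Prototype) is isomorphic to the given one.

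\textbf{First I would trivialize the data.} Since each $Gr^{\bf W}_k {\bf E}$ is a constant $\R$-VHS, the $C^\infty$ bundle ${\bf E}$ with filtration ${\bf W}_*$ admits a filtered $C^\infty$ trivialization ${\bf E} \cong M \times V$, where $V$ is a real vector space with weight filtration $W_*$ matching the graded fibers. Since the graded connection is trivial, the flat connection becomes $d + \omega$ with $\omega \in A^1(M) \otimes \frak n$, $\frak n := W_{-1}({\rm End}(V))$, satisfying the Maurer-Cartan equation. Fixing any $\R$-mixed Hodge structure $(W_*, F^*)$ on $V$ compatible with the constant Hodge structures on graded pieces, I write ${\bf F}^* = a F^*$ for a $C^\infty$ gauge transformation $a$ of $M \times V_{\C}$ valued in ${\rm Aut}_1(V_{\C}, W_*)$. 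Set $\omega_{\C} := a^{-1}da + a^{-1}\omega a \in A^1(M) \otimes \frak n_{\C}$; this is Maurer-Cartan, and Griffiths transversality for ${\bf F}^*$ translates into an $F^0$-condition on $\omega_{\C}$ relative to the Hodge filtration induced by $F^*$ on $\frak n_{\C}$ and the bigrading of $A^*(M) \otimes \C$.

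\textbf{Next I would construct $\Omega$ inductively.} I build a Maurer-Cartan element $\Omega \in {\mathcal M}^1 \otimes \frak n$ together with a gauge transformation $b$ valued in ${\rm Aut}_1(V_{\C}, W_*)$ realizing $\phi(\Omega) \sim \omega$. The induction runs along the lower central series of $\frak n$, mirroring the constructions of $\phi$ and $\varphi$ in Subsections \ref{dedec}--\ref{dbd}. At the $k$-th stage one normalizes $\omega$ modulo $W_{-k-1}\frak n$ to a $D^c$-closed form via the $DD^c$-Lemma, reads off the $\V_k$-component of $\Omega$ by pulling back through $\phi$ (using $D^c F_g$), and simultaneously matches $\omega_{\C}$ with $\varphi({\mathcal I}(\Omega))$ via $D'F'_g$. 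The Hodge-theoretic compatibility then forces $\Omega: \frak u \to {\rm End}(V)$ to be a morphism of $\R$-mixed Hodge structures in the sense of Definition \ref{mixuupr}, and $T$-equivariance is vacuous in the trivial setting of Theorem (Prototype).

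\textbf{Finally I would check the isomorphism.} The $\R$-VMHS built from $\frak V = (V, W_*, F^*, \Omega)$ via Theorem (Prototype) has underlying trivial bundle $M \times V$ with flat connection $d + \phi(\Omega)$, gauge equivalent to $d + \omega$ by $b$; the weight filtrations agree by construction; and the Hodge filtration $a' F^*$ produced by Proposition \ref{gau} matches ${\bf F}^* = a F^*$ because the canonical gauge $a'$ is determined by the homotopy $H$ of Subsection \ref{isho} applied to $\Omega$, set up precisely so that $H(\Omega)$ interpolates between $\phi(\Omega)$ and $\varphi({\mathcal I}(\Omega)) = \omega_{\C}$.

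\textbf{The hard part} will be the simultaneous inductive construction of $\Omega$ and $b$: ensuring $\Omega$ is a morphism of mixed Hodge structures while forcing $b$ to lie in ${\rm Aut}_1$. This requires coordinated use of the $DD^c$-Lemma and the $D'D''$-Lemma to control real and complex structures together, and is precisely where the non-$\R$-splitness of the mixed Hodge structure on $\frak u$ (Remark \ref{remnoss}) enters—one must carefully distinguish ${\mathcal I}$ from ${\mathcal I}_{sp}$ when defining the Hodge filtration on $\Omega$ and when verifying that the gauge $b$ inherits the correct upper-triangular shape with respect to $W_*$.
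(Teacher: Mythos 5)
Your overall strategy --- trivialize the filtered bundle, pull the real and complex connection forms back through the canonical $1$-minimal models $\phi$ and $\varphi\circ{\mathcal I}$, and compare gauges --- is the same as the paper's, which packages the pullback step as an equivalence of groupoids $F^{nil}_{gr}({\mathcal M}^{\ast},V,W_{\ast})\simeq F^{nil}_{gr}(A^{\ast}(M),V,W_{\ast})$ (Corollary \ref{eqqfu}); your inductive construction of $\Omega$ and $b$ along the lower central series is Lemma \ref{eqmilll} in substance, except that the paper needs only the cohomological properties of $\phi$ (isomorphisms on $H^{0},H^{1}$, injection on $H^{2}$), not the $DD^{c}$-Lemma, for that step.

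The genuine gap is in your final step. You fix a mixed Hodge structure $(W_{\ast},F^{\ast})$ on $V$ at the outset and claim the canonical gauge $a'$ of Proposition \ref{gau} matches the given gauge $a$ with ${\bf F}^{\ast}=aF^{\ast}$, ``because $H(\Omega)$ interpolates between $\phi(\Omega)$ and $\varphi({\mathcal I}(\Omega))=\omega_{\C}$.'' But $\varphi({\mathcal I}(\Omega))$ is only gauge-equivalent to $\omega_{\C}$, not equal to it: after transporting $\omega$ to $\Omega_{\phi}$ (by $b$), $\omega'$ to $\varphi(\Omega')$ (by a $b'$ chosen in $F^{0}$, which itself requires the strictness argument of Lemma \ref{f000}), and $\Omega_{\phi}$ to $\varphi({\mathcal I}(\Omega))$ (by the canonical $c$), there remains a discrepancy $c'=c^{-1}b^{-1}ab'$ between $\varphi({\mathcal I}(\Omega))$ and $\varphi(\Omega')$. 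The crux of the paper's proof is that, by full faithfulness of the functor induced by $\varphi$ together with ${\mathcal N}^{0}=\C$, this $c'$ is a \emph{constant} element of ${\rm Id}+W_{-1}({\rm End}(V_{\C}))$ satisfying $c'^{-1}{\mathcal I}(\Omega)c'=\Omega'$; one must then take the mixed Hodge $\frak u$-representation to be $(V,W_{\ast},c'F^{\ast},\Omega)$, i.e.\ twist the Hodge filtration on $V$ by $c'$, rather than keep the $F^{\ast}$ you fixed initially. Without identifying this constant correction (and the reason it is constant), your last step does not close: in general $\Omega$ is not a morphism of mixed Hodge structures for your initial $F^{\ast}$, and the prototype construction applied to $(V,W_{\ast},F^{\ast},\Omega)$ does not reproduce ${\bf F}^{\ast}$.
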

This result may be a counterpart of the construction of mixed Hodge representations of the fundamental group from unipotent VMHSs as in  \cite{HZ}.
On the construction of Hain and Zucker in \cite{HZ},  the monodromy  representation associated with a base point $x\in M$ play a central role.
Certainly, the monodromy representation of the fiber   at a base point is a very useful method for studying flat bundles. 
But, we consider alternative techniques for studying nilpotent flat bundles without using base points.
In fact, for proving Theorem \ref{UNIPMM}, we need "global trivializations" of unipotent $\R$-VMHSs but we never take fibers of them.

\subsection{Algebraic model for graded nilpotent flat bundles}
Let $M$ be a connected manifold.
Let $\bf E$ be a flat bundle over $M$ and ${\bf W}_{\ast}$ a increasing filtration  of the flat bundle $\bf E$
such that each $Gr_{k}^{\bf W}(\bf E)$ is the trivial local system.
We  fix a global flat frame of each $Gr_{k}^{\bf W}(\bf E)$.
By  splittings of 
\[\xymatrix{
0\ar[r]& {\bf W}_{k-1}(\bf E)\ar[r]&{\bf W}_{k}(\bf E)\ar[r]&Gr_{k}^{\bf W}(\bf E)\ar[r]&0
 },\]
 we obtain a global $\mathcal C^{\infty}$-frame $e^{m_{0}}_{1},\dots, e^{m_{0}}_{i_{m_{0}}},e^{m_{0}+1}_{1},\dots , e^{m_{0}+1}_{i_{m_{0}+1}},\dots, e^{m_{1}}_{1},\dots ,e^{m_{1}}_{i_{m_{1}}}$ of $\bf E$
 such that $e^{m_{0}}_{1},\dots,e^{k}_{i_{k}}$  is a global $\mathcal C^{\infty}$-frame of ${\bf W}_{k}(\bf E)$ and each $e^{k}_{1},\dots ,e^{k}_{i_{k}}$ induces the fixed  global flat frame of $Gr_{k}^{\bf W}(\bf E)$.
 By this global $\mathcal C^{\infty}$-frame, the flat connection on $\bf E$ is represented by a connection form $\omega\in A^{\ast}(M)\otimes W_{-1}({\rm End}(V))$
 where $V=\langle e^{k}_{i}\rangle_{i,k}$ with a filtration $W_{k}(V)=\langle e^{m_{0}}_{1},\dots, e^{k}_{i_{k}}\rangle$.
 (We do not regard $V$ as any fiber of $\bf E$.)
For another such global frame  $f^{m_{0}}_{1},\dots, f^{m_{0}}_{i_{m_{0}}},f^{m_{0}+1}_{1},\dots , f^{m_{0}+1}_{i_{m_{0}+1}},\dots, f^{m_{1}}_{1},\dots ,f^{m_{1}}_{i_{m_{1}}}$ of $\bf E$,
for the representation $\omega^{\prime}$, we have the gauge equivalence $a\in {\rm Id}_{\vert V}+A^{0}(M)\otimes  W_{-1}({\rm End}(V))$ such that 
\[a^{-1}da+a^{-1}\omega a=\omega^{\prime}.
\]
We will control these flat bundles by the following way.

Let $A^{\ast}$ be a cohomologically connected DGA over ${\mathbb K}=\R$ or $\C$ and $(V, W_{\ast})$ a finite-dimensional ${\mathbb K}$-vector space with an increasing filtration $W_{\ast}$.
We define the category $F^{nil}(A^{\ast},V,W_{\ast})$ such that 
objects are $\omega\in A^{1}\otimes W_{-1}({\rm End}(V))$ satisfying the Maurer-Cartan equation 
\[d\omega+\frac{1}{2}[\omega,\omega]=0
\]
and for $\omega_{1},\omega_{2}\in {\rm Ob}(F^{nil}(A^{\ast},V,W_{\ast}))$, morphisms from $\omega_{1}$ to $\omega_{2}$ are $a\in A^{0}\otimes  W_{0}({\rm End}(V))$ satisfying 
\[da+\omega_{1}a-a\omega_{2}=0.
\]
For $\omega_{1},\omega_{2}\in {\rm Ob}(F^{nil}(A^{\ast},V,W_{\ast}))$,
we define the differential $d_{\omega_{1},\omega_{2}}$ on $ A^{\ast}\otimes  W_{0}({\rm End}(V))$
so that for $\eta\in A^{r}\otimes  W_{0}({\rm End}(V))$, 
\[d_{\omega_{1},\omega_{2}}\eta=d\eta+\omega_{1}\eta-(-1)^{r}\eta\omega_{2}.
\]
Then the set of morphisms from $\omega_{1}$ to $\omega_{2}$ is identified with the $0$-th cohomology of the complex 
\[(A^{\ast}\otimes  W_{0}({\rm End}(V)), d_{\omega_{1},\omega_{2}}).
\]
This complex admits the filtration
\[A^{\ast}\otimes  W_{0}({\rm End}(V))\supset A^{\ast}\otimes  W_{-1}({\rm End}(V))\supset A^{\ast}\otimes  W_{-2}({\rm End}(V))\supset\dots
\]
and the $d_{\omega_{1},\omega_{2}}$ induces the differential $d\otimes {\rm id}$ on 
\[Gr_{-k}^{W}(A^{\ast}\otimes  W_{0}({\rm End}(V)))=A^{\ast}\otimes Gr_{-k}^{W}({\rm End}(V)).\]
We also consider the category  $F^{nil}_{gr}(A^{\ast},V,W_{\ast})$ such that 
\[{\rm Ob}(F^{nil}_{gr}(A^{\ast},V,W_{\ast}))={\rm Ob}(F^{nil}(A^{\ast},V,W_{\ast}))\]
and  for $\omega_{1},\omega_{2}\in {\rm Ob}(F^{nil}_{gr}(A^{\ast},V,W_{\ast}))$, 
morphisms from $\omega_{1}$ to $\omega_{2}$ are $a\in {\rm Id}_{\vert V}+A^{0}\otimes  W_{-1}({\rm End}(V))$ satisfying 
\[da+\omega_{1}a-a\omega_{2}=0.
\]
This category is a groupoid i.e. all morphisms are isomorphisms.

Let $\phi:A^{\ast}_{1}\to  A^{\ast}_{2}$ be a morphism between DGAs.
Then this induces the functors $F(\phi): F^{nil}(A^{\ast}_{1},V,W_{\ast})\to F^{nil}(A^{\ast}_{2},V,W_{\ast})$ and $F_{gr}(\phi): F^{nil}_{gr}(A^{\ast}_{1},V,W_{\ast})\to F^{nil}_{gr}(A^{\ast}_{2},V,W_{\ast})$.
\begin{proposition}\label{FFN}
Suppose that $\phi:A^{\ast}_{1}\to  A^{\ast}_{2}$ induces  isomorphisms on $0$th and first cohomologies.
Then the functors $F(\phi): F^{nil}(A^{\ast}_{1},V,W_{\ast})\to F^{nil}(A^{\ast}_{2},V,W_{\ast})$ and $F_{gr}(\phi): F^{nil}_{gr}(A^{\ast}_{1},V,W_{\ast})\to F^{nil}_{gr}(A^{\ast}_{2},V,W_{\ast})$ are fully-faithful.
\end{proposition}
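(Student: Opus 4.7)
The plan is to recognize that morphism sets in both $F^{nil}(A^{\ast},V,W_{\ast})$ and $F^{nil}_{gr}(A^{\ast},V,W_{\ast})$ are governed by $H^{0}$ of the twisted complex $(A^{\ast}\otimes W_{0}(\mathrm{End}(V)),d_{\omega_{1},\omega_{2}})$, so fully faithfulness reduces to showing $\phi$ induces an isomorphism on this $H^{0}$. Specifically, the set of morphisms from $\omega_{1}$ to $\omega_{2}$ in $F^{nil}(A^{\ast},V,W_{\ast})$ is exactly $\ker d_{\omega_{1},\omega_{2}}\cap (A^{0}\otimes W_{0}(\mathrm{End}(V)))$, and the inclusion of morphisms in the $gr$-category into $F^{nil}$-morphisms is the condition that the component in $A^{0}\otimes Gr^{W}_{0}(\mathrm{End}(V))$ equal $\mathrm{Id}_{V}$.

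First I would work with the filtration $\{A^{\ast}\otimes W_{-k}(\mathrm{End}(V))\}_{k\ge 0}$, which is a finite filtration by subcomplexes of $(A^{\ast}\otimes W_{0}(\mathrm{End}(V)),d_{\omega_{1},\omega_{2}})$ because $\omega_{1},\omega_{2}\in A^{1}\otimes W_{-1}(\mathrm{End}(V))$. As the excerpt records, the induced differential on the graded piece is simply $d\otimes\mathrm{id}$, so
\[
H^{i}\bigl(A^{\ast}\otimes Gr^{W}_{-k}(\mathrm{End}(V)),\,d\otimes \mathrm{id}\bigr)\cong H^{i}(A^{\ast})\otimes Gr^{W}_{-k}(\mathrm{End}(V)).
\]
By hypothesis, $\phi$ induces isomorphisms on these for $i=0,1$.

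Next I would perform a descending induction on $-k$ (starting from $W_{-N}=0$ for large $N$) to prove the joint statement that $\phi$ induces an isomorphism on $H^{0}$ and an injection on $H^{1}$ of each $(A^{\ast}\otimes W_{-k}(\mathrm{End}(V)),d_{\omega_{1},\omega_{2}})$. The inductive step applies the five-lemma and four-lemma to the long exact sequence attached to
\[
0\to A^{\ast}\otimes W_{-k-1}\to A^{\ast}\otimes W_{-k}\to A^{\ast}\otimes Gr^{W}_{-k}\to 0,
\]
using the inductive hypothesis on $W_{-k-1}$ together with the known isomorphisms on the graded piece to obtain both conclusions at $W_{-k}$. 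Taking $k=0$ gives fully faithfulness of $F(\phi)$. The only delicate point is keeping $H^{1}$-injectivity in the induction; since the hypothesis of the proposition says nothing about $H^{2}$, I need the four-lemma form that uses only $H^{0}$-surjectivity on $Gr^{W}$, $H^{1}$-injectivity on $W_{-k-1}$, and $H^{1}$-injectivity on $Gr^{W}$ to conclude $H^{1}$-injectivity on $W_{-k}$.

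Finally, to deduce fully faithfulness of $F_{gr}(\phi)$ from that of $F(\phi)$, I would observe that the leading term in $A^{0}\otimes Gr^{W}_{0}(\mathrm{End}(V))$ of any $F^{nil}$-morphism $a$ is automatically $d$-closed (because $\omega_{1},\omega_{2}\in W_{-1}$), hence lies in $H^{0}(A^{\ast})\otimes Gr^{W}_{0}(\mathrm{End}(V))=\mathbb{K}\cdot\mathrm{Id}_{V}\oplus\dots$; moreover $\phi$ is the identity on constants so ``leading term $=\mathrm{Id}_{V}$'' is preserved and reflected by $F(\phi)$. Thus for any morphism $a:\phi(\omega_{1})\to \phi(\omega_{2})$ in $F^{nil}_{gr}(A_{2}^{\ast},V,W_{\ast})$, the unique $b$ with $\phi(b)=a$ produced by fully faithfulness of $F(\phi)$ automatically lies in $\mathrm{Id}_{V}+A_{1}^{0}\otimes W_{-1}(\mathrm{End}(V))$, giving fully faithfulness of $F_{gr}(\phi)$. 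The hard part is not any single step but making the induction robust enough to run using only the hypotheses on $H^{0}$ and $H^{1}$ of $\phi$.
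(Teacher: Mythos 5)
Your proof is correct and follows essentially the same route as the paper: identify the morphism sets with $H^{0}$ of the twisted complex $(A^{\ast}\otimes W_{0}({\rm End}(V)),d_{\omega_{1},\omega_{2}})$, filter by $A^{\ast}\otimes W_{-k}({\rm End}(V))$, and run the five-lemma up the finite filtration, then observe that the $gr$-morphism sets are cut out by the leading-term condition which $\phi$ preserves and reflects. The only difference is that you make explicit the $H^{1}$-injectivity bookkeeping needed in the inductive step (the four-lemma variant controlling the connecting map), which the paper's proof leaves as "easily proved by the five-lemma."
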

\begin{proof}
On the functor $F(\phi): F^{nil}(A^{\ast}_{1},V,W_{\ast})\to F^{nil}(A^{\ast}_{2},V,W_{\ast})$,
it is sufficient to prove that the morphism \[\phi: (A^{\ast}_{1}\otimes  W_{0}({\rm End}(V)), d_{\omega_{1},\omega_{2}})\to (A^{\ast}_{2}\otimes  W_{0}({\rm End}(V)), d_{\phi(\omega_{1}),\phi(\omega_{2})})\] induces an isomorphism on $0$th cohomology.
This is easily proved by  the five-lemma on  long-exact sequences of extensions 
$A^{\ast}_{i}\otimes  W_{-k}({\rm End}(V))\supset A_{i}^{\ast}\otimes  W_{-k-1}({\rm End}(V))$ for $i=1,2$.

We can easily check that the map $F_{gr}(\phi)(\omega_{1},\omega_{2}): {\rm Hom}(\omega_{1},\omega_{2})\to {\rm Hom}(\phi(\omega_{1}),\phi(\omega_{2}))$ is the restriction of the above isomorphism and this is also an isomorphism.
\end{proof}

 Suppose that $\phi:A^{\ast}_{1}\to  A^{\ast}_{2}$ induces  isomorphisms on the $0$th and first cohomologies and an injection on the second cohomology.
 We take a splitting $V=\bigoplus V_{i}$ of vector space such that $W_{k}(V)=\bigoplus_{i\le k} V_{i}$.
 Corresponding to this splitting, we have the splitting ${\rm End}(V)=\bigoplus U_{i}$ and $U_{i}U_{j}\subset U_{i+j}$.
 For $\omega \in Ob(F^{nil}(A^{\ast}_{2},V,W_{\ast}))$, we write $\omega=\sum \omega _{i}$ with $\omega_{i}\in A^{1}_{2}\otimes U_{i}$. 
 By the  Maurer-Cartan equation, for each $k$, we have
 \[d\omega_{k}=-\sum_{i+j=k}\omega_{i}\wedge \omega_{j}.
 \] 
 We denote $a_{0}={\rm Id}\in {\rm End}(V)$.
\begin{lemma}\label{eqmilll}

For all positive integers $i$,  there exist $\Omega_{i}\in A^{1}_{1}\otimes U_{i}$ and $a_{i} \in A^{0}_{2}\otimes  U_{i}$
such that 
\[d\Omega_{k}=-\sum _{i+j=k}\Omega_{i}\wedge \Omega_{j}
\]
and 
\[da_{k}=\sum_{i+j=k}(-\omega_{i}a_{j}+ a_{i}\phi(\Omega_{j})).
\]
\end{lemma}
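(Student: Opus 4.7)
The plan is to prove the lemma by induction on $k\ge 1$, using the assumptions on $\phi$ (isomorphism on $H^0$ and $H^1$, injection on $H^2$) one weight at a time to solve the required equations.

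For $k=1$, extracting the $U_1$ component of $d\omega+\omega\wedge\omega=0$ gives $d\omega_1=0$ (since $\omega\wedge\omega\in A^2_2\otimes W_{-2}({\rm End}(V))$). Since $H^1(\phi)$ is surjective, pick a closed $\Omega_1\in A^1_1\otimes U_1$ with $[\phi(\Omega_1)]=[\omega_1]$; then $\phi(\Omega_1)-\omega_1=da_1$ for some $a_1\in A^0_2\otimes U_1$.

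For the inductive step, assume $\Omega_i,a_i$ have been constructed for $i<k$. Set $\Omega_{<k}:=\sum_{i<k}\Omega_i$, $a_{<k}:={\rm Id}+\sum_{i<k}a_i$, and define
\[
\gamma_k:=-\sum_{\substack{i+j=k\\i,j\ge 1}}\Omega_i\wedge \Omega_j\in A^2_1\otimes U_k,\qquad \epsilon_k:=\sum_{\substack{i+j=k\\i,j\ge 1}}(a_i\phi(\Omega_j)-\omega_i a_j)\in A^1_2\otimes U_k.
\]
A standard Jacobi-type computation using the inductive identities $d\Omega_i=-\sum_{p+q=i}\Omega_p\wedge\Omega_q$ yields $d\gamma_k=0$. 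The key identity I need is
\[
\phi(\gamma_k)=d(\omega_k-\epsilon_k).
\]
To prove it, I introduce the ``failure of gauge equivalence''
\[
\Psi:=da_{<k}+\omega\cdot a_{<k}-a_{<k}\cdot\phi(\Omega_{<k}),
\]
which by the inductive hypothesis lies in $A^1_2\otimes W_{-k}({\rm End}(V))$, and a direct expansion shows $\Psi_{U_k}=\omega_k-\epsilon_k$. A routine algebraic manipulation using $d\omega+\omega\wedge\omega=0$ gives the curvature-type identity
\[
d\Psi=-\omega\wedge \Psi-\Psi\wedge\phi(\Omega_{<k})-a_{<k}\cdot\bigl(d\phi(\Omega_{<k})+\phi(\Omega_{<k})\wedge\phi(\Omega_{<k})\bigr).
\]
Projecting onto $U_k$ kills the first two terms (both lie in $W_{-k-1}$), and the inductive Maurer-Cartan relations for $\Omega_{<k}$ identify the $U_k$-part of $d\phi(\Omega_{<k})+\phi(\Omega_{<k})\wedge\phi(\Omega_{<k})$ with $-\phi(\gamma_k)$; this produces exactly the claimed identity.

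Once $\phi(\gamma_k)$ is exact, injectivity of $H^2(\phi)$ implies $\gamma_k$ itself is exact in $A^{\ast}_1\otimes U_k$, so I can choose $\tilde\Omega_k\in A^1_1\otimes U_k$ with $d\tilde\Omega_k=\gamma_k$. The form $\phi(\tilde\Omega_k)-\omega_k+\epsilon_k$ is then closed in $A^1_2\otimes U_k$, and surjectivity of $H^1(\phi)$ lets me write $\phi(\tilde\Omega_k)-\omega_k+\epsilon_k=\phi(\eta_1)+d\alpha$ with $\eta_1\in A^1_1\otimes U_k$ closed and $\alpha\in A^0_2\otimes U_k$. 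Setting $\Omega_k:=\tilde\Omega_k-\eta_1$ (still closed under $d(\cdot)=\gamma_k$) and $a_k:=\alpha$ verifies both required equations simultaneously. The main obstacle is the curvature identity $\phi(\gamma_k)=d(\omega_k-\epsilon_k)$: it couples the two unknowns and forces the coordinated choice of $\Omega_k$ and $a_k$ that makes the single adjustment step using $H^1(\phi)$-surjectivity work. Everything else is bookkeeping with the weight filtration.
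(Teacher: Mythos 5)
Your proof is correct and follows essentially the same route as the paper: solve $d\Omega_k=-\sum_{i+j=k}\Omega_i\wedge\Omega_j$ using injectivity of $H^2(\phi)$, then adjust $\Omega_k$ by a closed form via surjectivity of $H^1(\phi)$ so that the closed $1$-form $\phi(\Omega_k)-\omega_k+\epsilon_k$ becomes exact, defining $a_k$. The only difference is that you spell out, via the curvature identity for $\Psi$, the computation the paper dismisses with ``we can easily check''; your identity $\phi(\gamma_k)=d(\omega_k-\epsilon_k)$ is exactly the paper's displayed equation in the inductive step.
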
 
\begin{proof}
We prove the lemma inductively.
Since $\phi:A^{\ast}_{1}\to  A^{\ast}_{2}$  induces an isomorphism on the first cohomology, by $d\omega_{1}=0$,
we have $\Omega_{i}\in A^{1}_{1}\otimes U_{1}$ and $a_{i} \in A^{0}_{2}\otimes U_{1}$ so that
\[da_{1}=-\omega_{1}+\phi(\Omega_{1}).
\]

We suppose that for all $i\le k-1$, we have $\Omega_{i}\in A^{1}_{1}\otimes U_{i}$ and $a_{i} \in A^{0}_{2}\otimes  U_{i}$.
Then by the  equations on $\Omega_{i}$ and $a_{i} $ for $i\le k-1$
  as in the statement, we can easily check
\[d\left(-\sum_{i+j=k} \omega_{i}a_{j} +\sum _{i+j=k, i\ge 1}a_{i}\phi(\Omega_{j})\right)=\sum_{i+j=k}\phi(\Omega_{i})\wedge\phi(\Omega_{j})
\]
and
\[d\left(\sum_{i+j=k}\Omega_{i}\wedge\Omega_{j}\right)=0.
\]
Since $\phi:A^{\ast}_{1}\to  A^{\ast}_{2}$  induces an injection on the second cohomology, 
we have $ \Omega_{k}\in A^{1}_{1}\otimes U_{k}$ so that
\[d \Omega_{k}=-\sum_{i+j=k}\Omega_{i}\wedge\Omega_{j}.
\]
We obtain
\[d\left(-\sum_{i+j=k} \omega_{i}a_{j} +\sum _{i+j=k}a_{i}\phi(\Omega_{j})\right)=d\left(-\sum_{i+j=k} \omega_{i}a_{j} +\sum _{i+j=k, i\ge 1}a_{i}\phi(\Omega_{j})\right)+d \phi(\Omega_{k})=0.
\]
Since $\phi:A^{\ast}_{1}\to  A^{\ast}_{2}$  induces an isomorphism on the first cohomology, we can take $\Omega_{k}$ such that 
\[da_{k}=-\sum_{i+j=k} \omega_{i}a_{j} +\sum _{i+j=k}a_{i}\phi(\Omega_{j})
\]
for some $a_{k} \in A^{0}_{2}\otimes  U_{k}$.
Thus the lemma follows.
\end{proof}
Let $\Omega=\sum\Omega_{i}\in A^{1}_{1}\otimes W_{-1}({\rm End}(V))$ and  $a=\sum a_{i}\in {\rm Id}+A^{0}_{2}\otimes W_{-1}({\rm End}(V))$.
We obtain the equations
\[d\Omega-\Omega\wedge \Omega=0
\]
and
\[da+\omega a-a\phi(\Omega)=0.
\]
Thus, by Proposition \ref{FFN},  we obtain the following statement.
\begin{corollary}\label{eqqfu}
If $\phi:A^{\ast}_{1}\to  A^{\ast}_{2}$ induces  isomorphisms on $0$th and first cohomologies and an injection on the second cohomology, then  the functor $F_{gr}(\phi): F^{nil}_{gr}(A^{\ast}_{1},V,W_{\ast})\to F^{nil}_{gr}(A^{\ast}_{2},V,W_{\ast})$ is an  equivalence.

\end{corollary}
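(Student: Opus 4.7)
The plan is to combine Proposition \ref{FFN} with Lemma \ref{eqmilll}: the former already yields fully-faithfulness of $F_{gr}(\phi)$ (using only the hypotheses on $H^{0}$ and $H^{1}$), so the work to be done is essential surjectivity, which is precisely what the inductive construction in Lemma \ref{eqmilll} produces.

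More concretely, I would first invoke Proposition \ref{FFN} to record that $F_{gr}(\phi)$ is fully faithful; this is free because $F^{nil}_{gr}(A^{\ast}_{i},V,W_{\ast})$ has the same objects as $F^{nil}(A^{\ast}_{i},V,W_{\ast})$, and for two objects $\omega_{1},\omega_{2}$ the morphism set in $F^{nil}_{gr}$ is just the affine subset of $\mathrm{Hom}_{F^{nil}}(\omega_{1},\omega_{2})$ lying in $\mathrm{Id}+A^{0}\otimes W_{-1}(\mathrm{End}(V))$, so any bijection of $\mathrm{Hom}$-sets in $F^{nil}$ restricts to a bijection of $\mathrm{Hom}$-sets in $F^{nil}_{gr}$ (the property of being of the form $\mathrm{Id}+$ lower weight is preserved by $\phi$ in both directions because $\phi$ is a unital DGA map).

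Next I would prove essential surjectivity. Take any $\omega\in\mathrm{Ob}(F^{nil}_{gr}(A^{\ast}_{2},V,W_{\ast}))$, fix a splitting $V=\bigoplus V_{i}$ compatible with $W_{\ast}$, and write $\omega=\sum_{i\ge 1}\omega_{i}$ with $\omega_{i}\in A^{1}_{2}\otimes U_{i}$ as in the discussion preceding Lemma \ref{eqmilll}. Apply Lemma \ref{eqmilll} to obtain sequences $\Omega_{i}\in A^{1}_{1}\otimes U_{i}$ and $a_{i}\in A^{0}_{2}\otimes U_{i}$ (with $a_{0}=\mathrm{Id}$) satisfying the two stated recursive identities. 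Summing, the element $\Omega=\sum_{i\ge 1}\Omega_{i}\in A^{1}_{1}\otimes W_{-1}(\mathrm{End}(V))$ satisfies the Maurer--Cartan equation, so $\Omega\in\mathrm{Ob}(F^{nil}_{gr}(A^{\ast}_{1},V,W_{\ast}))$, while $a=\sum_{i\ge 0}a_{i}\in\mathrm{Id}+A^{0}_{2}\otimes W_{-1}(\mathrm{End}(V))$ satisfies
\[
da+\omega a-a\,\phi(\Omega)=0,
\]
which is exactly the condition for $a$ to be a morphism $\phi(\Omega)\to\omega$, i.e.\ $F_{gr}(\phi)(\Omega)\to\omega$, in $F^{nil}_{gr}(A^{\ast}_{2},V,W_{\ast})$. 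Since $F^{nil}_{gr}$ is a groupoid (every such $a$ is invertible, as its image in $\mathrm{End}(V)$ is unipotent), $a$ is an isomorphism, so $\omega$ is isomorphic to an object in the image of $F_{gr}(\phi)$. Combined with fully-faithfulness, this proves $F_{gr}(\phi)$ is an equivalence.

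There is essentially no obstacle beyond what is already handled by Lemma \ref{eqmilll}; the only point that might require a brief remark is the invertibility of the gauge morphism $a$ in the groupoid $F^{nil}_{gr}$, which is immediate since $a-\mathrm{Id}$ takes values in the nilpotent ideal $W_{-1}(\mathrm{End}(V))$ and the finite geometric series $\sum_{k\ge 0}(\mathrm{Id}-a)^{k}$ furnishes an inverse automatically lying in $\mathrm{Id}+A^{0}_{2}\otimes W_{-1}(\mathrm{End}(V))$.
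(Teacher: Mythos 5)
Your proposal is correct and follows essentially the same route as the paper: Proposition \ref{FFN} gives fully-faithfulness, and Lemma \ref{eqmilll} supplies, for each $\omega$ in the target, the object $\Omega=\sum\Omega_{i}$ and the gauge morphism $a=\sum a_{i}\in\mathrm{Id}+A^{0}_{2}\otimes W_{-1}(\mathrm{End}(V))$ realizing essential surjectivity. Your added remark on the invertibility of $a$ via the nilpotence of $W_{-1}(\mathrm{End}(V))$ is a harmless elaboration of a point the paper leaves implicit.
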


Let $M$ be a connected manifold.
Let $\mathcal M$ be the $1$-minimal model of the DGA $A^{\ast}(M)$ and $\phi:{\mathcal M}^{\ast}\to A^{\ast}(M)$ a map which induces isomorphisms on $0$th and first cohomologies and an injection on the second cohomology.
Then, for the dual Lie algebra $\frak u$ of $\mathcal M$,
each object in $F^{nil}_{gr}({\mathcal M}^{\ast},V,W_{\ast})$
is regarded as a nilpotent representation ${\frak u}\to W_{-1}({\rm End}(V))$ by the Maurer-Cartan equation.
By Lemma \ref{eqmilll}, for a connection form $\omega$ of a filtered nilpotent local system $({\bf E}, {\bf W}_{\ast})$ as above
we obtain $\Omega\in {\rm Ob} F^{nil}_{gr}({\mathcal M}^{\ast},V,W_{\ast})$ and hence a nilpotent representation ${\frak u}\to W_{-1}({\rm End}(V))$.
This construction works on unipotent VMHSs like the monodromy of local systems in the work of Hain and Zucker (\cite{HZ}).

\subsection{Unipotent VMHS}
Let $({\bf E}, {\bf W}_{\ast}, {\bf F}^{\ast})$ be an $\R$-VMHS.
We assume that this is unipotent i.e. the $\R$-VHS on each $Gr_{k}^{\bf W}(\bf E) $ is constant.
Then $({\bf E},{\bf W}_{\ast})$ is a  local system as in the last subsection.
We  fix a global flat frame of $Gr_{k}^{\bf W}(\bf E)$.
We take a global $\mathcal C^{\infty}$-frame $e^{m_{0}}_{1},\dots, e^{m_{0}}_{i_{m_{0}}},e^{m_{0}+1}_{1},\dots , e^{m_{0}+1}_{i_{m_{0}+1}},\dots, e^{m_{1}}_{1},\dots ,e^{m_{1}}_{i_{m_{1}}}$ of $\bf E$ such that $e^{m_{0}}_{1},\dots,e^{k}_{i_{k}}$  is a global $\mathcal C^{\infty}$-frame of ${\bf W}_{k}(\bf E)$ and each $e^{k}_{1},\dots ,e^{k}_{i_{k}}$ induces the fixed  global flat frame of $Gr_{k}^{\bf W}(\bf E)$.
Let $\omega\in A^{\ast}(M)\otimes W_{-1}({\rm End}(V))$ be the connection form for this frame  where $V=\langle e^{k}_{i}\rangle_{i,k}$.
 
We consider the complex  $\mathcal C^{\infty}$-vector bundle  ${\bf E}_{\C}$ and take a splitting ${\bf E}_{\C}=\bigoplus {\bf E}^{k,r}$ such that 
\[{\bf E}^{r,k}\cong Gr_{r}^{\bf F}Gr_{k}^{\bf W}({\bf E}_{\C})\]
as a $\mathcal C^{\infty}$-vector bundle.
By the assumption, each quotient 
\[Gr_{r}^{\bf F}Gr_{k}^{\bf W}({\bf E}_{\C})\]
is a trivial flat bundle.
Thus, we take a global $\mathcal C^{\infty}$-frame $f^{r,k}_{1},\dots, f^{r,k}_{i_{r,k}}$ of ${\bf E}^{k,r}$ which induces a global flat frame of $Gr_{r}^{\bf F}Gr_{k}^{\bf W}({\bf E}_{\C})$.
We take  a global $\mathcal C^{\infty}$-frame $f^{m_{0}}_{1},\dots, f^{m_{0}}_{i_{m_{0}}},f^{m_{0}+1}_{1},\dots , f^{m_{0}+1}_{i_{m_{0}+1}},\dots, f^{m_{1}}_{1},\dots ,f^{m_{1}}_{i_{m_{1}}}$ of ${\bf E}_{\C}$ such that $f^{m_{0}}_{1},\dots,f^{k}_{i_{k}}$  is a global $\mathcal C^{\infty}$-frame of ${\bf W}_{k}(\bf E)$, each $f^{k}_{1},\dots ,f^{k}_{i_{k}}$ induces the fixed  global flat frame of $Gr_{k}^{\bf W}(\bf E)$ and each $f^{k}_{i}$ is a linear combination of $\{f^{r,k}_{i}\}_{r, i}$.
Let $V^{\prime}=\langle f^{k}_{i}\rangle_{i,k}$ and $F^{p}(V^{\prime})=\langle f^{r,k}_{i}\rangle_{r\ge p}$.
Then,  for the connection form $\omega^{\prime}\in A^{\ast}(M)\otimes \C\otimes W_{-1}({\rm End}(V^{\prime}))$  for this frame, the condition of  holomorphicity of $\bf F$ and the Griffiths transversality is equivalent to the condition
\[\omega^{\prime}\in F^{0}\left(A^{\ast}(M)\otimes \C\otimes W_{-1}({\rm End}(V^{\prime}))\right).
\] 
By the identification $V_{\C}=V^{\prime}$, we obtain an $\R$-mixed Hodge structure $(V,W_{\ast}, F^{\ast})$.
We remark that this $\R$-mixed Hodge structure $(V,W_{\ast}, F^{\ast})$ can not be considered as any  fiber of $({\bf E},{\bf W}_{\ast}, {\bf F}^{\ast})$ and can not be determined uniquely.

By these observations and the arguments in the last subsection, for considering unipotent $\R$-VMHSs without a base point, we would like to study the following category.
Let $V$ be  a finite-dimensional $\R$-vector space with an $\R$-mixed Hodge structure $(W_{\ast}, F^{\ast})$.
We define the category $VMHS^{unip}_{\R}(M,V,W_{\ast}, F^{\ast})$ by the following ways:
\begin{itemize}
\item Objects are $(\omega,\omega^{\prime}, a)$ so that:
\begin{itemize}
\item $\omega\in {\rm Ob}(F^{nil}(A^{\ast}(M), V, W_{\ast}))$.
\item  $\omega^{\prime}\in {\rm Ob}(F^{nil}(A^{\ast}(M)\otimes \C, V_{\C}, W_{\ast}))$ satisfying \[\omega^{\prime} \in F^{0}\left(A^{\ast}(M)\otimes \C\otimes W_{-1}({\rm End}(V_{\C}))\right).\]
\item $a\in {\rm Hom}(\omega,\omega^{\prime})$ in the category $F^{nil}_{gr}(A^{\ast}(M)\otimes \C, V_{\C}, W_{\ast})$.
\end{itemize}
\item For $(\omega_{1},\omega_{1}^{\prime}, a_{1}), (\omega_{2},\omega_{2}^{\prime}, a_{2})\in {\rm Ob}(VMHS^{unip}_{\R}(M,V,W_{\ast}, F^{\ast}))$, morphisms from the first one to the second one are $(b,b^{\prime})$ 
so that
\begin{itemize}
\item $b\in {\rm Hom}(\omega_{1},\omega_{2})$.
\item $b^{\prime}\in {\rm Hom}(\omega^{\prime}_{1},\omega^{\prime}_{2})$ satisfying 
\[
b^{\prime}\in F^{0}\left(A^{\ast}(M)\otimes\C \otimes  W_{0}({\rm End}(V))\right) .\]
\item $b^{\prime}\circ a_{1}=a_{2}\circ b$.
\end{itemize}
\end{itemize}
For $(\omega,\omega^{\prime}, a)\in {\rm Ob}(VMHS^{unip}_{\R}(M,V,W_{\ast}, F^{\ast}))$, define $\bf E$ by the  trivial ${\mathcal C}^{\infty}$-vector bundle $M\times V$ with the flat connection $d+ \omega$, ${\bf W}_{\ast}$ by
the filtration of  the vector bundle $M\times V$ induced by the weight filtration $W_{\ast}$ on $V$ and ${\bf F}^{\ast}$ by the filtration of the vector bundle $M\times V$ induced by the Hodge filtration $F^{\ast}$ on $V$.
Then, $({\bf E},{\bf W}_{\ast}, {\bf F}^{\ast})$ is in fact a unipotent $\R$-VMHS.

Suppose that $M$ admits a K\"ahler metric $g$.
We consider the canonical $1$-minimal models $\phi: {\mathcal M}^{\ast}\to  A^{\ast}(M)$ and $\varphi:{\mathcal N}^{\ast}\to  A^{\ast}(M)\otimes \C$ with the trivial $\rho$ and the isomorphism $\mathcal I:{\mathcal M}^{\ast}_{\C}\to {\mathcal N}$ with the homotopy $H$ from $\varphi\circ \mathcal I$ to  $\phi$ as in Section \ref{1-mKal}.
Let $(\omega,\omega^{\prime}, a)\in  {\rm Ob}(VMHS^{unip}_{\R}(M,V,W_{\ast}, F^{\ast}))$.
Then, by Lemma \ref{eqqfu}, we can take $\Omega\in {\rm Ob}(F^{nil}({\mathcal M}^{\ast}, V, W_{\ast})) $ and $b\in {\rm Id}+A^{0}(M)\otimes W_{-1}({\rm End}(V))$ such that $\omega$ is isomorphic to  $\Omega_{\phi}=\phi(\Omega)$ via $b$.
We can also take $\Omega^{\prime}\in {\rm Ob}(F^{nil}({\mathcal N}^{\ast}, V, W_{\ast})) $ and $b^{\prime}\in {\rm Id}+A^{0}(M)\otimes \C\otimes W_{-1}({\rm End}(V_{\C}))$ such that $\omega^{\prime}$ is isomorphic to  $\Omega^{\prime}_{\varphi}=\varphi(\Omega^{\prime})$ via $b^{\prime}$.
\begin{lemma}\label{f000}
We can choose  \[\Omega^{\prime}\in F^{0}\left({\mathcal N}^{\ast}\otimes  W_{-1}({\rm End}(V_{\C}))\right)\] and \[b^{\prime}\in F^{0}\left(A^{\ast}(M)\otimes\C \otimes  W_{0}({\rm End}(V))\right).\]
\end{lemma}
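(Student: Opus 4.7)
The plan is to refine the inductive construction underlying Corollary \ref{eqqfu} applied to $\varphi:{\mathcal N}^{\ast}\to A^{\ast}(M)\otimes\C$, so that at every stage the constructed data stays inside the Hodge filtration $F^{0}$.

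First I would fix the Deligne bigrading $V_{\C}=\bigoplus V^{p,q}$ of the $\R$-mixed Hodge structure $(V,W_{\ast},F^{\ast})$, yielding an induced bigrading ${\rm End}(V_{\C})=\bigoplus {\rm End}(V_{\C})^{p,q}$ with $W_{m}=\bigoplus_{p+q\le m}{\rm End}(V_{\C})^{p,q}$ and $F^{r}=\bigoplus_{p\ge r}{\rm End}(V_{\C})^{p,q}$. Taking the $W$-splitting of Lemma \ref{eqmilll} compatibly gives $U_{k}=\bigoplus_{p+q=-k}{\rm End}(V_{\C})^{p,q}$, each carrying its own Hodge filtration $F^{r}U_{k}=U_{k}\cap F^{r}{\rm End}(V_{\C})$. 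Writing $\omega'=\sum\omega'_{k}$ with $\omega'_{k}\in A^{1}(M)\otimes\C\otimes U_{k}$, the hypothesis $\omega'\in F^{0}$ gives $\omega'_{k}\in F^{0}(A^{1}(M)\otimes\C\otimes U_{k})$ for every $k$.

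Then I would run the induction of Lemma \ref{eqmilll} for $\varphi$, verifying at each stage that $\Omega'_{k}$ can be chosen in $F^{0}({\mathcal N}^{1}\otimes U_{k})$ and that the corresponding component $a'_{k}$ of $b'={\rm Id}+\sum a'_{k}$ lies in $A^{0}(M)\otimes\C\otimes F^{0}U_{k}$. The key structural facts are: (a) $d$ on ${\mathcal N}^{\ast}$ preserves the bigrading $({\mathcal N}^{\ast})^{P,Q}$, hence preserves $F^{r}{\mathcal N}^{\ast}=\bigoplus_{P\ge r}({\mathcal N}^{\ast})^{P,Q}$, and primitives under $d$ within each bigraded component are provided by the construction of ${\mathcal N}^{\ast}$ in Subsection \ref{dbd}; (b) $\varphi$ preserves the bigrading and hence $F^{\ast}$; (c) the Hodge filtration on $A^{\ast}(M)\otimes\C$ is strict with respect to $d$, a consequence of the Hodge-to-de-Rham spectral sequence degenerating at $E_{1}$ on a compact K\"ahler manifold. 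For the base case $k=1$, Hodge theory decomposes $\omega'_{1}$ as a harmonic form plus $d\alpha$; the $(1,0)$- and $(0,1)$-parts of the harmonic representative lift to $\V^{1,0}\otimes U_{1}$ and $\V^{0,1}\otimes U_{1}$ respectively to give $\Omega'_{1}\in F^{0}({\mathcal N}^{1}\otimes U_{1})$, with $a'_{1}=\alpha$. For the inductive step with $k\ge 2$, once $\Omega'_{i},a'_{i}\in F^{0}$ for $i<k$, the closed element $-\sum\Omega'_{i}\wedge\Omega'_{j}\in F^{0}({\mathcal N}^{2}\otimes U_{k})$ admits a primitive $\Omega'_{k}\in F^{0}({\mathcal N}^{1}\otimes U_{k})$ by (a), and then the closed right-hand side for $a'_{k}$, which lies in $F^{0}(A^{1}(M)\otimes\C\otimes U_{k})$ and is exact, admits a primitive in $A^{0}(M)\otimes\C\otimes F^{0}U_{k}$ by (c).

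The main obstacle is the strictness step for $a'_{k}$. For $a'_{k}$ to lie in $F^{0}$ it must have no ${\rm End}(V_{\C})^{p,q}$-component with $p<0$; since $d$ respects the ${\rm End}$-bigrading, the right-hand side of its defining equation must have no such components either. Componentwise, any ${\rm End}(V_{\C})^{p,q}$-component with $p<0$ a priori sits in $F^{-p}A^{1}\otimes{\rm End}(V_{\C})^{p,q}$, is closed, and is exact; by strictness of the Hodge filtration it equals $d$ of an element of $F^{-p}A^{0}=0$ (since $-p\ge 1$), hence vanishes. This automatic vanishing, which is the essential use of the K\"ahler hypothesis, guarantees that the inductive construction stays inside $F^{0}$ at every stage.
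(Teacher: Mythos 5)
Your proposal is correct and follows essentially the same route as the paper's proof: induction over the weight-graded pieces coming from the Deligne splitting $V_{\C}=\bigoplus V^{p,q}$, using that $d$ on ${\mathcal N}^{\ast}$ preserves the bigrading (hence is strict for $F^{\ast}$), that $\varphi$ is $F$-compatible, and strictness of $F^{\ast}$ on $A^{\ast}(M)\otimes\C$ to place each $b^{\prime}_{k}$ in $F^{0}$. The only difference is cosmetic: you spell out the strictness step for $b^{\prime}_{k}$ via componentwise vanishing in the ${\rm End}(V_{\C})^{p,q}$-decomposition, where the paper simply invokes ``the standard argument of Hodge theory.''
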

\begin{proof}
Consider the bigrading $V_{\C}=\bigoplus V^{p,q}$ of the $\R$-mixed Hodge structure $(W_{\ast},F^{\ast})$.
Then, by the decoposition  $V_{\C}=\bigoplus_{k} \left(\bigoplus_{p+q=k} V^{p,q}\right)$,
taking $\omega^{\prime}=\sum\omega^{\prime}_{k}$ as in the last subsection,
we can construct
$\Omega^{\prime}=\sum\Omega^{\prime}_{i}$ and  $b^{\prime}=\sum b^{\prime}_{i}$ as in Lemma \ref{eqqfu}.
By \[\omega^{\prime} \in F^{0}\left(A^{1}(M)\otimes \C\otimes W_{-1}({\rm End}(V_{\C}))\right),\]
for each $k$,  we have 
\[\omega^{\prime}_{k}\in F^{0}\left(A^{1}(M)\otimes \C\otimes W_{-k}({\rm End}(V_{\C}))\right).\]
It is sufficient to prove that we can take 
\[\Omega^{\prime}_{k}\in F^{0}\left({\mathcal N}^{1}\otimes  W_{-1}({\rm End}(V_{\C}))\right)\] and \[b^{\prime}_{k}\in F^{0}\left(A^{0}(M)\otimes\C \otimes  W_{-k}({\rm End}(V))\right).\]

We  prove inductively.
By the construction in the proof of Lemma \ref{eqqfu},
$\Omega^{\prime}_{1}$ is the harmonic representative of $\omega^{\prime}_{1}$.
Thus, by 
\[\omega^{\prime}_{1}\in F^{0}\left(A^{\ast}(M)\otimes \C\otimes W_{-1}({\rm End}(V_{\C}))\right),\]
we have $\Omega^{\prime}_{1}\in F^{0}\left(A^{\ast}(M)\otimes \C\otimes W_{-1}({\rm End}(V_{\C}))\right)$.
By the standard argument of  Hodge theory, we can take $b^{\prime}_{1}\in F^{0}\left(A^{0}(M)\otimes\C \otimes  W_{-1}({\rm End}(V))\right)$ such that
\[db_{1}^{\prime}=-\omega^{\prime}_{1}+\varphi(\Omega^{\prime}_{1}) .\]

We assume that for $i\le k-1$ we have taken 
\[\Omega^{\prime}_{i}\in F^{0}\left({\mathcal N}^{\ast}\otimes  W_{-i}({\rm End}(V_{\C}))\right)\] and \[b^{\prime}_{i}\in F^{0}\left(A^{\ast}(M)\otimes\C \otimes  W_{-i}({\rm End}(V))\right).\]
By \[d\Omega^{\prime}_{k}=-\sum _{i+j=k}\Omega^{\prime}_{i}\wedge \Omega^{\prime}_{j},
\]
since the differential on ${\mathcal N}^{\ast}$ is stricktly compatible with the filtration $F^{\ast}$,
we obtain
$\Omega^{\prime}_{k}\in F^{0}\left({\mathcal N}^{\ast}\otimes W_{-k}({\rm End}(V_{\C}))\right)$.
Since  $\varphi:{\mathcal N}^{\ast}\to  A^{\ast}(M)\otimes \C$ is compatible with the filtration $F^{\ast}$,
\[-\sum_{i+j=k} \omega^{\prime}_{i}b^{\prime}_{j} +\sum _{i+j=k}b^{\prime}_{i}\phi(\Omega^{\prime}_{j})\in F^{0}\left(A^{0}(M)\otimes\C \otimes  W_{-k}({\rm End}(V))\right)
\]
and 
we can also take  $b^{\prime}_{k}\in F^{0}\left(A^{0}(M)\otimes\C \otimes  W_{-k}({\rm End}(V))\right)$ such that
\[db^{\prime}_{k}=-\sum_{i+j=k} \omega^{\prime}_{i}b^{\prime}_{j} +\sum _{i+j=k}b^{\prime}_{i}\phi(\Omega^{\prime}_{j}).\]
Hence the lemma follows.
\end{proof}

Define $\Omega_{\varphi}=\varphi({\mathcal I}(\Omega))$.
Then, as in Proposition \ref{gau}, we obtain the isomorphism $c$ from $\Omega_{\phi}$ to $\Omega_{\varphi}$ in the category $F^{nil}_{gr}(A^{\ast}(M)\otimes \C, V_{\C}, W_{\ast})$
which is determined by $H(\Omega)$.
Now we have the isomorphism $b:\omega\to  \Omega_{\phi}$ in the category $F^{nil}_{gr}(A^{\ast}(M), V, W_{\ast})$ and the isomorphisms
$a:\omega \to \omega^{\prime}$,
$b^{\prime}:\omega^{\prime}\to \Omega^{\prime}_{\varphi}$
and $c:\Omega_{\phi} \to \Omega_{\varphi}$ in the category $F^{nil}_{gr}(A^{\ast}(M)\otimes \C, V_{\C}, W_{\ast})$.
Denote $c^{\prime}=c^{-1}b^{-1}ab^{\prime}$.
Then, by Lemma \ref{f000},  $(\omega,\omega^{\prime}, a)$ is isomorphic to $(\Omega_{\phi}, \Omega^{\prime}_{\varphi}, cc^{\prime}) $ in the category $VMHS^{unip}_{\R}(M,V,W_{\ast}, F^{\ast})$ via $(b,b^{\prime})$.

By Proposition \ref{FFN} and ${\mathcal N}^{0}=\C$, we have $c^{\prime}\in {\rm Id}_{\vert V_{\C}}+W_{-1}({\rm End}(V_{\C}))$ and
\[c^{\prime-1}{\mathcal I}(\Omega)c^{\prime}=\Omega^{\prime}.
\]
By Lemma \ref{f000}, ${\frak V}=(V,W_{\ast},c^{\prime}F^{\ast}, \Omega)$ is a mixed Hodge $\frak u$-representation.
We can easiy check that the $\R$-VMHS $({\bf E}_{\frak V}, {\bf W}_{\frak V \ast},{\bf F}^{\ast}_{\frak V})$ associated with ${\frak V}=(V,W_{\ast},c^{\prime}F^{\ast}, \Omega)$ as in Theorem (Prototype) corresponds to $(\Omega_{\phi}, \Omega^{\prime}_{\varphi}, cc^{\prime})\in {\rm Ob}(VMHS^{unip}_{\R}(M,V,W_{\ast}, F^{\ast}))$.
Thus we obtain Theorem \ref{UNIPMM}.


\begin{thebibliography}{40}

\bibitem{Car}
J. Carlson, S. Müller-Stach, C. Peters,  Period mappings and period domains. Cambridge Studies in Advanced Mathematics, {\bf85}. Cambridge University Press, Cambridge, 2003.

\bibitem{CKS} E. Cattani, A. Kaplan and W. Schmid, Degeneration of Hodge structures, Ann. of Math. (2) {\bf 123}
(1986), 457--535.
\bibitem{Del}
P. Deligne,  Th\'eorie de Hodge. II.  Inst. Hautes \'Etudes Sci. Publ. Math. No. {\bf 40} (1971), 5--57. 
\bibitem{DGMS}P. Deligne, P. Griffiths, J. Morgan, and D. Sullivan, Real homotopy theory of Kahler manifolds. Invent. Math. {\bf 29} (1975), no. 3, 245--274.

\bibitem{EKPR} P. Eyssidieux, L. Katzarkov, T. Pantev, ; M. Ramachandran,  Linear Shafarevich conjecture. Ann. of Math. (2) {\bf176} (2012), no. 3, 1545--1581.
\bibitem{ES}
P. Eyssidieux,  C. Simpson, Variations of mixed Hodge structure attached to the deformation theory of a complex variation of Hodge structures. J. Eur. Math. Soc. (JEMS) {\bf 13} (2011), no. 6, 1769--1798.
\bibitem{FH}
W. Fulton, J. Harris, Representation Theory, GTM {\bf 129}, Springer-Verlag, 1991.
\bibitem{Goll}W. M. Goldman,
Higgs bundles and geometric structures on surfaces. The many facets of geometry, 129--163, Oxford Univ. Press, Oxford, 2010.
\bibitem{Gold} W. M. Goldman, J. J. Millson, The deformation theory of representations of fundamental groups of compact K\"ahler manifolds. Inst. Hautes \'Etudes Sci. Publ. Math. No. {\bf 67} (1988), 43--96.
\bibitem{GMi2}
W. M. Goldman, J. J. Millson,
The homotopy invariance of the Kuranishi space. 
Illinois J. Math. {\bf 34} (1990), no. 2, 337--367. 
\bibitem{GMo} P. Griffiths, J. Morgan,  Rational homotopy theory and differential forms. Second edition. Progress in Mathematics, {\bf 16}. Springer, New York, 2013.

\bibitem{HainI}
R. M. Hain, The de Rham homotopy theory of complex algebraic varieties. I. K-Theory {\bf1} (1987), no. 3, 271–324.
\bibitem{Hai} R. M. Hain, The Hodge de Rham theory of relative Malcev completion. Ann. Sci. Ecole Norm. Sup. (4) {\bf 31} (1998), no. 1, 47--92.
\bibitem{HZ}
R. M. Hain, S. Zucker, Unipotent variations of mixed Hodge structure. Invent. Math. {\bf 88} (1987), no. 1, 83--124.
\bibitem{HZ2}
R. M. Hain, S. Zucker,
 A guide to unipotent variations of mixed Hodge structure. Hodge theory (Sant Cugat, 1985), 92--106, Lecture Notes in Math., {\bf 1246}, Springer, Berlin, 1987.
\bibitem{Hit} N. J. Hitchin,
The self-duality equations on a Riemann surface. Proc. London Math. Soc. (3) {\bf 55} (1987), no. 1, 59--126.
\bibitem{Man}
M. Manetti, Deformation theory via differential graded Lie algebras. Algebraic Geometry Seminars, 1998--1999 (Italian) (Pisa), 21–48, Scuola Norm. Sup., Pisa, 1999.
\bibitem{Mor}
J. W. Morgan, The algebraic topology of smooth algebraic varieties. \textit{ Inst. Hautes Études Sci. Publ. Math.} No. {\bf 48} (1978), 137--204.
\bibitem{Morco} J. W. Morgan, 
Correction to "The algebraic topology of smooth algebraic varieties'' . Inst. Hautes \'Etudes Sci. Publ. Math. No. {\bf 64} (1986), 185.
\bibitem{PS}
C. A. M. Peters and J. H. M. Steenbrink, Mixed Hodge structures, Ergebnisse der Mathe- matik und ihrer Grenzgebiete, 3. Fogle, A Series of Modern Surveys in Mathematics, vol. {\bf 52}, Springer, 2008.
\bibitem{Pri} J. P.  Pridham, Real non-abelian mixed Hodge structures for quasi-projective varieties: formality and splitting. Mem. Amer. Math. Soc. {\bf 243} (2016), no. 1150.
\bibitem{Re} B. E. Reed,
Representations of solvable Lie algebras. Michigan Math. J. {\bf 16} 1969 227--233.
\bibitem{Rag}
M. S. Raghunathan,  On the first cohomology of discrete subgroups of semisimple Lie groups. Amer. J. Math. {\bf 87} 1965 103--139.
\bibitem{Sch}M. Schlessinger,  Functors of Artin rings. Trans. Amer. Math. Soc. {\bf 130} 1968 208--222. 
\bibitem{Sim1}  C. Simpson, Constructing variations of Hodge structure using Yang-Mills theory and applications to uniformization. 
J. Amer. Math. Soc. {\bf 1} (1988), no. 4, 867--918. 
\bibitem{Sul} D. Sullivan, 
Infinitesimal computations in topology. Inst. Hautes $ \rm \acute E$tudes Sci. Publ. Math. No. {\bf 47} (1977), 269--331 (1978).
\bibitem{TY}
P. Tauvel and R. W. T. Yu, Lie algebras and algebraic groups, Springer Monographs in Mathematics, Springer-Verlag, Berlin, 2005. 
\bibitem{Wat} B. Watson,
Manifold maps commuting with the Laplacian. J. Differential Geometry {\bf 8} (1973), 85--94.
\bibitem{Zu} S. Zucker,
Hodge theory with degenerating coefficients. $L^{2}$ cohomology in the Poincar\'e metric. Ann. of Math. (2) {\bf 109} (1979), no. 3, 415--476.

\end{thebibliography}
\end{document}